\newcommand{\Q}{ {\mathbb Q} }
\newcommand{\R}{ {\mathbb R} }
\newcommand{\C}{ {\mathbb C} }
\newcommand{\Z}{ {\mathbb Z} }
\newcommand{\N}{ {\mathbb N} }
\newcommand{\GL}{\mathrm{GL}}
\newcommand{\SU}{\mathrm{SU}}
\newcommand{\PSU}{\mathrm{PSU}}
\newcommand{\PSL}{\mathrm{PSL}}
\newcommand{\PGL}{\mathrm{PGL}}
\newcommand{\SL}{\mathrm{SL}}
\newcommand{\PP}{ {\mathbb P} }
\newcommand{\Pe}{{\mathbb P}^1_* }
\newcommand{\Pec}{ Z}
\newcommand{\Cng}{  C_n^G(\PP^1\setminus Z) }
\newcommand{\Aut}{ {\mathrm{Aut}} }
\newcommand{\Hom}{ \mathrm{Hom}}
\newtheorem{theorem}{Theorem}[section]
\newtheorem{proposition}[theorem]{Proposition}
\newtheorem{definition}[theorem]{Definition}
\newtheorem{corollary}[theorem]{Corollary}
\newtheorem{prop-def}[theorem]{Proposition-Definition}
\newtheorem{remark}[theorem]{Remark}
\newtheorem{lemma}[theorem]{Lemme}
\title{Algebraic invariants of orbit configuration spaces in genus zero associated to finite groups}
\author{Mohamad Maassarani}
\date{ }
\begin{document}
\maketitle  

\begin{abstract}
We consider orbit configuration spaces associated to finite groups acting freely by orientation preserving homeomorphisms on the $2$-sphere minus a finite number of points. Such action is equivalent to a homography action of a finite subgroup $G\subset \PGL(\C^2)$ on the complex projective line $\PP^1$ minus a finite set $Z$ stable under $G$. We compute the cohomology ring and the Poincaré series of the orbit configuration space $C_n^G(\PP^1 \setminus  Z)$. This can be seen as a generalization of the work of \cite{Arn} for the classical configuration space $C_n(\C)$ ($(G,Z)=(\{1\},\infty$)). It follows from the work that $C_n^G(\PP^1\setminus Z)$ is formal in the sense of rational homotopy theory. We also prove the existence of an LCS formula relating the Poincaré series of $C_n^G(\PP^1\setminus Z)$ to the ranks of quotients of successive terms of the lower central series of the fundamental group of $C_n^G(\PP^1 \setminus Z)$. The successive quotients correspond to homogenous elements of graded Lie algebras introduced in \cite{MM} and \cite{MMT}. Such formula is also known for classical configuration spaces of $\C$, where fundamental groups are Artin braid groups and the ranks correspond to dimensions of homogenous elements of the Kohno-Drinfeld Lie algebras.
\end{abstract}
\section*{Introduction}
\subsection*{Context and main results}
For $M$ a topological space and $n\geq 1$ an integer, the configuration space of $n$ ordered points of $M$ is the space:
$$ C_n(M)=\{(p_1,\dots,p_n)\in M \vert p_i \neq p_j, \text{for $i\neq j$}\}.$$
Configuration spaces appear naturally in mathematics. For instance, they are encountered in the study braid groups, Knizhnik-Zamolodchikov connections and manifold embeddings. The topology of these spaces and their algebraic invariants are widely studied. \\\\
Let $S$ be an orientable surface. In general, for $S$ with compact boundary (eventually empty), the space $C_n(S)$ is aspheric, except for $S$ homeomorphic to the $2$-sphere $S^2$. The fundamental group $\pi_1C_n(S)$ of $C_n(S)$ is known as Artin pure braid group on $n$ strands for $S=\C$ (\cite{Art}) or generally as the surface pure braid group on $n$ strands (\cite{GuaSPB}) .\\\\
In \cite{Arn}, V. Arnold computed the cohomology ring of $C_n(\C)$ with integer coefficients and its Poincaré series. The cohomology ring of $C_n(S^2)$ was computed in \cite{ziegC} (also considered in \cite{FH}). The spaces $C_n(\C \setminus X)$ where $X$ has cardinal $1$ or $n$ were considered in \cite{Felder} and \cite{var}. In a more general context, Fulton and MacPherson (\cite{FM}) computed a model of $C_n(M)$ (a differential graded algebra whose cohomology is the cohomology of $C_n(M)$) for $M$ a smooth compact complex projective variety. These models were simplified by Kriz (\cite{Kriz}) and then used by Bezrukavnikov (\cite{bezr}) to compute the Malcev Lie algebra $\mathrm{Lie}\pi_1C_n(S)$ of $\pi_1C_n(S)$ for $S$ compact (see also \cite{BE3}, and \cite{Koh} for the case $S=\C$ where the Lie algebra is the Kohno-Drinfeld algebra). \\\\
We will be interested in variants of configurations spaces (introduced in \cite{Xico}, known as orbit configuration spaces:
$$ C_n^H(M)=\{(p_1,\dots,p_n)\in M \vert p_i \neq h\cdot p_j, \text{for $i\neq j$ and $h\in H$}\},$$
where $H$ is a group acting (continuously) on $M$. As for classical configuration spaces the projections $C_n^H(M) \to C_{k}^H(M)$ on the first $k$ coordinates $(k\leq n)$ are locally trivial fibrations under reasonable assumptions (\cite{Xico},\cite{XicoSL}). In \cite{DCoh} and \cite{BE} the authors considered the orbit configuration space $C_n^{\mu_p}(\C^\times)$ where $\mu_p$ is the group of $p$-roots of the unity acting by multiplication on $\C^\times$, and computed using different approaches the Malcev Lie algebra of $\pi_1C_n^{\mu_p}(\C^\times)$. In \cite{XicoCohGauss}, the orbit configuration spaces $C_n^H(\C)$ where $H=\Z+i\Z$ acts additively is studied. In \cite{CKX}, the case of orbit configuration spaces obtained out of a surface subgroup (of genus $g\geq 2$) of $\PSL(\R^2)$ acting freely on the upper half plane $\{z\in \C, \mathrm{Im}(z)>0\}$ is considered. Orbit configuration spaces of spheres with respect to the antipodal action were studied in \cite{XicoAnt1} and \cite{ziegAnt} (see also \cite{XicoAnt2}).\\\\
Here, we consider the orbit configuration spaces $C_n^H(S^2\setminus Y)$ where $Y$ is a finite set and $H$ is a finite group acting freely by orientation preserving homeomorphisms on $S^2\setminus Y$. Under these assumptions, the action of $H$ on $S^2\setminus Y$ is in fact equivalent to the natural action of a finite homography group $G\subset \PGL(\C^2)$ (isomorphic to $H$) on $\PP^1 \setminus Z$, where $\PP^1\simeq S^2$ is the complex projective line and $Z$ is a finite $G$-stable set containing the irregular points of $G$ (points with non-trivial stabilizer). One can give a complete classification of such actions (Cf. Subsection \ref{S1}) and give a biholomorphic equivalence between some orbit configuration spaces associated to isomorphic groups (Cf. Subsection \ref{S21}).\\\\\
In \cite{MM}, we have computed the Malcev Lie algebra of $C_n^G(\PP^1\setminus Z)$ for $Z$ equal to the set of irregular points of $G$ and then extended the work (in \cite{MMT}) to the case of an arbitrary $G$-stable set $Z$ containing the irregular points of $G$. In particular, we recover from \cite{MM} the Lie algebras computed by \cite{DCoh} and \cite{BE} for $(G,Z)=(\{z\mapsto \zeta z\vert \zeta \in \mu_p\}, \{0,\infty\})$ and the Kohno-Drinfeld Lie algebras for $(G,Z)=(\{1\},\infty)$. The work implies that $C_n^G(\PP^1\setminus Z)$ is $1$-formal in the sense of rational homotopy theory. For $(G,Z)\neq (\{1\}, \emptyset)$, the space $C_n^G(\PP^1\setminus Z)$ is biholomorphic to the complement of a hypersurface in $\C^n$ and hence in $\PP^n(\C)$ (Cf. Subsection \ref{S22}) and hence the $1$-formality for $(G,Z)\neq \{1,\emptyset\}$ follows also from \cite{Koh}. \\\\
\textbf{Main results of the paper}\\
Fix $R\subset \C$ a unital ring and set $X_n:=C_n^G(\PP^1\setminus Z)$.\\\\
For $(G,Z)\neq (\{1\},\emptyset)$ we prove (Cf. Subsection \ref{S41} and Section \ref{S5}) that:
\begin{itemize}
\item[1)] The singular cohomology ring $H^*(X_n,R)$ is isomorphic to the $R$-subalgebra $\Omega_D^*(X_n)_R$ of holomorphic (in fact algebraic) closed forms generated by logarithmic $1$-forms $\{\omega_a\}_a$ (having integer periods) corresponding to the irreducible components $\{D_a\}_a$ of $(\PP^1)^n \setminus X_n$. The isomorphism is induced by integration of forms on homology classes.
\item[2)] We find relations between the elements $\{\omega_a\}_a$ of $\Omega_D^*(X_n)_R \simeq H^*(X_n,R)$ wich together with the antisymmetry relations give a presentation of the algebra.
\item[3)] The homology and cohomology groups of $X_n$ with coefficients in $R$ are free $R$-modules of finite type and the Poincaré series $P_{X_n}$ of $X_n$ is given by: $$P_{X_n}(t)=\prod_{k=1}^n(1+\alpha_kt),$$ where $\alpha_k=\vert G \vert (k-1) +\vert Z\vert -1$.
\item[4)] The space $X_n$ is formal in the sense of rational homotopy (Cf. Subsection \ref{S formality}, for the definition).
\item[5)] The space $X_n$ is a $K(\pi,1)$ spaces.
\item[6)] The fundamental group $\pi_1(X_n)$ of $X_n$ is an iterated almost direct product of free groups in the sense of \cite{CohSuc} (Cf. Subsection \ref{S IA} for the definition) and the ranks of the abelian groups $\Gamma_i \pi_1X_n/\Gamma_{i+1} \pi_1 X_n$ corresponding to the lower central series filtration $\{\Gamma_i \pi X_n\}$ of $\pi_1X_n$ can be related to the Poincaré series of $X_n$ by the "LCS formula":
$$P_{X_n}(-t)=\prod_{i\geq 1} (1-t^i)^{\phi_i(\pi_1X_n)},$$
where $\phi_i(\pi_1X_n)$ is the rank $\Gamma_i \pi_1X_n/\Gamma_{i+1} \pi_1 X_n$, and for which we give an explicit formula.
\end{itemize}
For the case $(G,Z)=(\{1\},\emptyset)$, i.e. $X_n=C_n(\PP^1) \simeq C_n(S^2)$, the cohomology ring was computed in \cite{ziegC}. Here we show (Cf. Subsection \ref{S42} and Section \ref{S5}) that:
\begin{itemize}
\item[7)] The space $C_n(\PP^1)$ is formal and construct a subalgebra of closed differential forms isomorphic via integration to $H^*(C_n(\PP^1),R)$ for $\frac{1}{2}\in R$ and $R$ a principal ideal domain.
\item[8)] We have an LCS formula relating the Poincaré series (that factors into a product of linear terms) to the ranks of the abelian groups $\Gamma_i \pi_1X_n/\Gamma_{i+1} \pi_1 X_n$ for which we give an explicit formula.
\end{itemize}
The constants $\phi_i(\pi_1X_n)$ appearing in $(6)$ and the LCS formula mentioned in $(8)$ correspond to the dimension of homogenous elements of graded Lie algebras introduced in \cite{MM} and \cite{MMT} (Cf. Remark \ref{rmk Malcev}).\\\\
Results $(1),(2)$ and $(3)$ are obtained as a generalization of the work of \cite{Arn}. Result $(4)$ follows from $(1)$ using standard facts. Point $(5)$ is obtained by classical means using the homotopy long exact sequence of the fibration associated to the orbit configuration spaces. To prove the existence of the LCS formula we use results from \cite{CohSuc}, \cite{FRLCS}). In fact, using $(5)$ one can also deduce the Poincaré polynomial of $X_n$ from the work of \cite{CohSuc} (Cf. Section \ref{S5}). Results $(7)$ and $(8)$ follow from topological properties known in the literature.\\\\
In the case $G$ cyclic generated by $z\mapsto \zeta z$ where $\zeta$ is a root of the unity and $Z=\{0,\infty\}$ or $G=\{1\}$ and $\vert Z \vert =1$, $C_n^G(\PP^1\setminus Z)$ is the complement in $\C^n$ of a central hyperplane arrangement. The corresponding algebras $\Omega_D^*(X_n)_\Z$  are those of \cite{Bries}, \cite{Arn} and the presentations are equivalent to those given in \cite{OS} (Cf. last paragraph of Subsection \ref{S32}). As seen previously, $\pi_1C_n(\C)$ is the Artin pure braid group on $n$ strands and the LCS formula is known (see for instance \cite{CohSuc}).
\subsection*{Outline of the paper}
Section 1, consists of reminders on formality, De Rham theorem, Iterated almost direct products, their LCS formula and the classification of finite homography groups actions on $\PP^1$. We also state the fact relating orientation preserving finite group actions on a $2$-sphere with finite punctures to the action of finite homography groups actions on $\PP^1$.\\\\
In subsection \ref{S21}, we recall the definition of an orbit configuration space $C_n^H(M)$, the action of $H^n\rtimes \mathfrak{S}_n$ on $C_n^H(M)$ and the fibration theorem. We then consider the orbit configuration spaces $C_n^H(S^2\setminus Y)$ where $Y$ and $H$ are both finite and the action of $H$ is free and preserves the orientation. We deduce from results of section $1$ that $C_n^H(S^2\setminus Y)$ is homeomorphic to $C_n^G(\PP^1\setminus Z)$ for $G$ a finite homorgraphy group (isomorphic to $H$) and $Z$ a $G$-stable finite set containing the irregular points of $G$. We then show that some orbit configuration spaces $C_n^G(\PP^1\setminus Z)$ are biholomorphic. Finally, we examen the fiber $F_n$ of the fibration $C_n^G(\PP^1\setminus Z)\to C_{n-1}^G(\PP^1\setminus Z)$. In subsection \ref{S22}, whe show that $C_n^G(\PP^1\setminus Z)$ is biholomorphic to the complement of a hypersurface in $\C^n$, for $(G,Z)\neq (\{1\},\emptyset)$. In the last subsection (\ref{S23}), we construct generators of $\pi_1C_n^G(\PP^1\setminus Z)$.\\\\
Section \ref{S3} contains three subsection and its results are use in section \ref{S4}. We assume that $(G,Z)\neq (\{1\}, \emptyset)$. In subsection \ref{S31}, we define closed holomorphic $1$-forms $\{\omega_a\}_a$ on $X_n:=C_n^G(\PP^1\setminus Z)$ and consider for $R$ an unital subring of $\C$ the $R$-algebra $\Omega_D^*(X_n)_R$ generated by the forms $\{\omega_a\}_a$, we study the "action" of $G^n\rtimes \mathfrak{S}_n$ (by pullback) on $\Omega_D^*(X_n)_R$ (the algebra is stable under the group). This allows us to deduce (subsection \ref{S32}), from the relations given in \cite{Arn}, relations satisfied in $\Omega_D^2(X_n)_R$ by the $1$-forms $\{\omega_a\}_a$. In section \ref{S32}, we also introduce the algebra $A_n(R)$ quotient of the $R$ exterior algebra $\Lambda (\oplus_a R\cdot \tilde{\omega}_a)$ by analogues of the relations satisfied by $\{\omega_a\}_a$. The relations allow us to define sets of elements spanning both $R$-modules $\Omega_D^*(X_n)_R$ and $A_n(R)$. In the last subsection, we study the periods of the $1$-forms $\{\omega_a\}_a$ ,i.e. values of their integrals on homology classes.\\\\
In section \ref{S4}, we prove the main results $(1),(2),(3),(4)$ and $(7)$. The first four result are proven in subsection \ref{S41} and $(7)$ is proven in \ref{S42}. We use the results on periods of the $1$-forms $\{\omega_a\}_a$ to obtain an integration isomorphism $\phi_{1-}^R: \Omega_{D}^{1-}(X_n)_R \to \Hom_\Z(H_{1-}(X_n,\Z),R)$ ($1-$ is for degree 0 plus degree $1$ parts). We then construct a cohomology extension of the fiber for the fiber bundle $F_n\to X_n \to X_{n-1}$. This is mainly obtained by identifying (isomorphically) a subspace $W_n^* \subset \Omega_{D}^{1-}(X_n)_R$ to $\Hom_\Z(H_{1-}(F_n,\Z),R)$ via integration. Hence, the Leray-Hirsch theorem applies, and we get a splitting $H^*(X_n,R)\simeq H^*(X_{n-1},R)\otimes W_n^*$ and $(3)$ follows. The splitting also allows to give a basis of $H^*(X_n,R)$ close to the generating sets of $\Omega_D(X_n)_R$ and $A_n(R)$. We then deduces using general arguments that integration gives a well-defined $R$-algebra morphism $\Phi_R:\Omega_D^*(X_n)_R \to H^*(X_n,R)$. Results $(1)$ and $(2)$ follow, since the image of the set spanning $\Omega_D^*(X_n)_R$ given in section \ref{S3} is exactly the basis of $H^*(X_n,R)$ obtained from the splittings. Result $(4)$ is obtained from $(1)$ by classical arguments. The remaining case of $(G,Z)=(\{1\}, \infty)$ ($X_n=C_n(\PP^1)$) is studied in subsection \ref{S42}. We use known elements on $C_n(\PP^1)$ and results from subsection \ref{S41} to prove $(7)$ and compute the Poincaré series of $C_n(\PP^1)$. We use a homotopy equivalence between $\PP^1$ and $\C_2(\PP^1)$, a diffeomorphism between $C_n(\PP^1) \simeq C_3(\PP^1) \times C_{n-3}(\PP^1\setminus \{0,1,\infty\})$ (for $n\geq 3$) and diffeomorphisms $C_3(\PP^1)\simeq \PGL(\C^2) \simeq \PP^3(\R)\times \R^3$, where $\PP^3(\R)$ is the real $3$-dimensional projective space. \\\\
Section \ref{S5} contains the proofs of the remaining main results: $(5),(6)$ and $(8)$. They are essentially proved using the homotopy long exact sequence of the fibration $C_{n+1}^G(\PP^1\setminus Z)\to C_n(\PP^1 \setminus Z)$, elements from previous sections and the LCS formula from \cite{CohSuc} for iterated almost direct products of free groups.\\\\
In the appendix, we consider a closed oriented surface $S$. We prove that if $Y\subset S$ is finite (eventually empty) and $H$ is a finite group acting by orientation preserving homeomorphisms on $S\setminus Y$, then the action of $H$ extends to an action on $S$ and there exists a complex structure on $S$ in which $H$ acts holomorphically. We use this result to link finite group actions on $S^2\setminus Y$ to homography actions on $\PP^1$, in section 1.

\tableofcontents
\section{Reminders}
In subsection \ref{S formality}, we recall the definition of formal spaces and some related facts for smooth manifolds. We go rapidly through the construction of the De Rham isomorphism in subsection \ref{S DR}. Subsection \ref{S IA} is devoted to almost product of group and the LCS formula of an iterated almost product of free groups. The last subsection (\ref{S1}) contains reminders on homogarphies of $\PP^1$ including the classification of finite homography groups of $\PGL(\C^2)$. We also relate continuous orientation preserving actions of finite groups on $S^2$ with a finite number of punctures, to actions of finite homography groups on $\PP^1$.
\subsection{Cochain algebras and formality of topological spaces}\label{S formality}
The material in this section and original references can be found in \cite{FelHar} (except for proposition \ref{prop weak gen}). For $R$ a ring an $R$-cochain algebra (dga for short) is a unital associative graded $R$-algebra $A^*=\oplus_{i\geq 0} A^i$, equipped with a differential $d: A^*\to A^*$ mapping $A^i\to A^{i+1}$, such that:
$d(xy)=d(x)y+(-1)^{i}xd(y)$ and $d^2=0$, for $x \in A^i$ and $y \in A^j$. We say that $A^*$ is commutative ($A^*$ is a cdga) if $xy=(-1)^{ij}yx$, for $x$ and $y$ as before. The cohomology of the dga $A^*$ is $H^*(A^*):=\mathrm{Ker}(d)/\mathrm{Im}(d)$ that inherits a structure of graded algebra from $A$. Throughout the rest of the text, a graded map of degree $0$ is simply called graded map or map of graded modules-vector spaces-cdga's.\\\\
We say that two cdga's $A^*$ and $B^*$ are weakly equivalent if there exists a sequence of cdga morphisms (morphisms of algebras respecting the differentials) connecting $A^*$ and $B^*$:
$$A^*\to C(1)^* \leftarrow C(2)^*\to \cdots \to C(n)^*\leftarrow B^*, $$
and inducing isomorphisms in cohomology (quasi-isomorphisms).\\\\
Let $F$ be a field of characteristic zero and $X$ a topological space. One assigns functorially to $X$ the cdga $A_{PL}^*(X)_F$ of polynomial differential forms on $X$ with coefficients in $F$. The algebra $H^*(A_{PL}^*(X)_F)$ is naturally isomorphic to the singular cohomology algebra of $X$ with coefficients in $F$. Moreover, when $X$ is a smooth manifold, $A_{PL}^*(X)_\R$ is weakly equivalent to the cdga $\Omega^*(X)_\R$ of real valued smooth differential forms on $X$ with product the wedge product and differential the exterior differential. Assuming that $X$ have rational homotopy of finite type, we obtain a natural quasi-isomorphism $A_{PL}^*(X)_\Q\otimes F \to A_{PL}^*(X)_F$. It follows that:
\begin{proposition}\label{prop weak gen}
If the rational cohomology of a smooth manifold $X$ is finite dimensional, then $A_{PL}^*(X)_\C$ is weakly equivalent to the cdga $\Omega^*(X)_\C$ of complex valued smooth differential forms on $X$ with product the wedge product and differential the exterior differential.
\end{proposition}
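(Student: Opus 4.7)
The plan is to assemble a zig-zag of cdga quasi-isomorphisms from $A_{PL}^*(X)_\C$ to $\Omega^*(X)_\C$ by base-changing the real De Rham equivalence (already recalled in the excerpt) along $\R\hookrightarrow\C$. Two essentially formal observations drive the argument: first, $A_{PL}^*(X)_F$ is compatible with extension of scalars in $F$; and second, $\Omega^*(X)_\C$ is, by its very definition, $\Omega^*(X)_\R\otimes_\R\C$. Flatness of $\C$ over $\R$ then carries quasi-isomorphisms across the tensor product.

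Concretely, I would proceed in three steps. First, the quoted natural quasi-isomorphism $A_{PL}^*(X)_\Q\otimes_\Q F\xrightarrow{\sim}A_{PL}^*(X)_F$ (valid under the finite-type hypothesis) applied to $F=\R$ and $F=\C$, together with the tautology $A_{PL}^*(X)_\Q\otimes_\Q\C\cong (A_{PL}^*(X)_\Q\otimes_\Q\R)\otimes_\R\C$, produces a $\C$-cdga quasi-isomorphism
$$A_{PL}^*(X)_\R\otimes_\R\C\;\xrightarrow{\;\sim\;}\;A_{PL}^*(X)_\C.$$
Second, I take the zig-zag of $\R$-cdga quasi-isomorphisms $A_{PL}^*(X)_\R\leftrightarrow\cdots\leftrightarrow\Omega^*(X)_\R$ coming from the real De Rham equivalence and apply $-\otimes_\R\C$ degreewise to obtain a zig-zag of $\C$-cdga's $A_{PL}^*(X)_\R\otimes_\R\C\leftrightarrow\cdots\leftrightarrow\Omega^*(X)_\R\otimes_\R\C$. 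Third, I invoke the canonical cdga isomorphism $\Omega^*(X)_\R\otimes_\R\C\cong\Omega^*(X)_\C$ (each complex form is uniquely its real plus $i$ times its imaginary part, and the wedge product and exterior differential on $\Omega^*(X)_\C$ are the $\C$-bilinear extensions of their real analogues). Splicing these three pieces gives the desired weak equivalence.

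The only point that requires care is that $-\otimes_\R\C$ turns a zig-zag of cdga quasi-isomorphisms into another one. The cdga structure is preserved tautologically (base change respects products, units and differentials), so the question reduces to whether a quasi-isomorphism of cochain complexes of $\R$-vector spaces remains a quasi-isomorphism after tensoring with $\C$; this is immediate from flatness of $\C$ over $\R$, equivalently from the Künneth identity $H^*(C^*\otimes_\R\C)\cong H^*(C^*)\otimes_\R\C$ for any cochain complex $C^*$ of $\R$-vector spaces. The finite-type hypothesis on the rational cohomology of $X$ is used only in the first step, to invoke the quoted quasi-isomorphism between rational and real/complex $A_{PL}$; the rest of the argument is formal, so I do not expect a genuine obstacle beyond bookkeeping of the zig-zag.
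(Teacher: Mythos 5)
Your proposal is correct and follows essentially the same route the paper intends: it combines the finite-type quasi-isomorphism $A_{PL}^*(X)_\Q\otimes F\to A_{PL}^*(X)_F$ with the real De Rham weak equivalence $A_{PL}^*(X)_\R\simeq\Omega^*(X)_\R$, base-changed along $\R\hookrightarrow\C$ using flatness and the identification $\Omega^*(X)_\R\otimes_\R\C\cong\Omega^*(X)_\C$. The paper leaves this as an "it follows" statement, and your write-up supplies exactly the bookkeeping it omits.
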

A path connected topological space $X$ is called formal over $F$ (usually over $\Q)$ if $A_{PL}^*(X)_F$ is weakly equivalent to its cohomology (equivalently the singular cohomology of $X$ with coefficients in $F$) with zero differential. It turns out that: if the rational cohomology of $X$ is finite dimensional, then $X$ is formal over $F$ if and only if $X$ is formal over $\Q$.
\subsection{De Rham theorem}\label{S DR}
Let $X$ be a smooth manifold and $F$ be the field of real numbers or complex numbers. A smooth singular $k$-simplex on $X$, is a singular $k$-simplex on $X$ admitting a smooth extension to a neighborhood of the standard $k$-simplex $\Delta^k$. One can therefore define the integral of a differential form $\omega$ over a smooth simplex $\sigma^s$ by $\int_{\sigma^s}\omega:=\int_{\Delta_k}(\sigma^{s})^*\omega$, where $\Delta^k$ is endowed with the standard orientation. We denote by $H^*_{DR}(X,F)$ the De Rham cohomology of smooth $F$-valued differential forms on $X$ (corresponding to the cohomology of the cdga $\Omega^*(X)_F$ seen in subsection \ref{S formality}).
One has an integration morphism (\cite{Lee}, \cite{Bred}) of graded $F$-vector spaces:
\begin{align*}
\begin{split}
\int: H_{DR}^*(X,F)&\to \Hom_\Z(H_*(X,\Z),F),\\
\omega &\mapsto ([\sigma] \mapsto \int_{\sigma^s} \omega)
\end{split}
\end{align*}
where $H_*(X,F)$ is the singular homology group of $X$ with coefficients in $F$, $\Hom_\Z$ stands for morphisms of abelian groups, $[\sigma]$ is a homology class and $\sigma^s$ is a smooth representative of $[\sigma]$ (a representative which is a sum of smooth simplices). The natural map $N^F_{X,*}:H^*(X,F)\to \Hom_\Z(H_*(X,\Z),F)$, where $H^*(X,F)$ is the singular cohomology of $X$ with coefficients in $F$, is an isomorphism, since $F$ is a field.
The De Rham theorem states that:
\begin{align*}
\text{$(N^F_{X,*})^{-1}\circ\int: H_{DR}^*(X,F) \to H^*(X,F)$ is an isomorphism of algebras,}
\end{align*}
with $H_{DR}^*(X,F)$ equipped with the wedge product. The theorem is usually stated for $F=\R$. It also holds for $F=\C$, since $H_{DR}^*(X,\C)\simeq H^*_{DR}(X,\R)\otimes \C$. One can find the a proof in \cite{War} (p. 205-207) or \cite{WhGit} (p. 142). It seems that the multiplicative part is not covered in \cite{Lee} and \cite{Bred}.
\subsection{Almost direct products and LCS formula}\label{S IA}
Let $H$ be a group. We denote by the $\{\Gamma_i H\}_{i\geq1}$ the lower central series of $H$: $\Gamma_1 H= H$ and $\Gamma_{i+1} H =(H,\Gamma_i H)$, for $i\geq1$, where $(A,B)$ is the subgroup of $H$ generated by the commutators $(a,b)=aba^{-1}b^{-1}$ for $(a,b)\in A \times B$. Let $H^{ab}$ be the abelianization of $H$ (i.e. $H/\Gamma_2 H$). An $IA$-automorphism of $H$ is an automorphism inducing the identity on $H^{ab}$. A semi-direct product of two groups $H_1\rtimes_\beta H_2$ is called almost direct product if for all $h_2\in H_2$, $\beta(h_2)$ is $IA$.
\begin{proposition}[\cite{FRLCS},\cite{CohSuc}]\label{LCS Formula}
Let $$F:=F(c_n)\rtimes_{\alpha_n} (F(c_{n-1})\rtimes_{\alpha_{n-1}} (F(c_{n-2})\rtimes_{\alpha_{n-1}} (\cdots \rtimes_{\alpha_{3}} (F(c_2) \rtimes_{\alpha_{2}} F(c_1))\cdots ))),$$ be an iterated almost direct product, i.e. the image of $\alpha_i$ consists of IA-automorphisms, for $i\in[1,n-1]$. We have:
$$\prod_{i\geq 1} (1-t^i)^{\phi_i (F)}=\prod_{k=1}^n (1-c_{k}t),$$
where $\phi_i (F)$ is the rank of the abelian group $\Gamma_i F/\Gamma_{i+1}F$ for $i\geq 1$ and $\Gamma_k F$ is the $k$-th term of the lower central series of $F$.
\end{proposition}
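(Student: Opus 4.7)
The plan is to reduce the assertion to the well-known Witt formula for free groups via the standard additivity of lower-central-series ranks under almost direct products. The key input is that for an almost direct product $H_1 \rtimes_\beta H_2$ with $\beta$ acting by IA-automorphisms, the lower central series splits in a compatible way, so that the associated graded Lie algebra decomposes, as a graded abelian group, into the direct sum of the associated graded Lie algebras of the two factors.

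First I would set up induction on $n$. Writing $F = F(c_n) \rtimes_{\alpha_n} F'$, where $F'$ is the iterated almost direct product of $F(c_{n-1}), \dots, F(c_1)$, the inductive hypothesis applied to $F'$ gives
\[
\prod_{i\geq 1}(1-t^i)^{\phi_i(F')} = \prod_{k=1}^{n-1}(1-c_k t).
\]
So it suffices to handle the base of an almost direct product of two groups and to show that $\phi_i(F) = \phi_i(F(c_n)) + \phi_i(F')$ for all $i\geq 1$. Multiplying out, this additivity combined with Witt's formula
\[
\prod_{i\geq 1}(1-t^i)^{\phi_i(F(c_n))} = 1 - c_n t
\]
yields the desired identity.

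The main step, and main obstacle, is to prove the additivity of the $\phi_i$ for an almost direct product $H = H_1 \rtimes_\beta H_2$. The strategy is to show that the natural inclusion and projection
\[
H_1 \hookrightarrow H \twoheadrightarrow H_2
\]
induce, on the associated graded Lie algebras $\operatorname{gr}(\Gamma_\bullet)$, a short exact sequence of graded abelian groups which splits degree by degree. Concretely, one checks by induction on $i$ that $\Gamma_i H = (\Gamma_i H \cap H_1)\cdot \Gamma_i H_2$ inside $H$ (using the semidirect product decomposition $h = h_1 h_2$ and repeatedly expanding commutators), and then that $\Gamma_i H \cap H_1 = \Gamma_i H_1$. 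This second equality is where the IA hypothesis is essential: the commutator $(h_2, h_1)$ with $h_2 \in H_2$ and $h_1 \in H_1$ lies in $\Gamma_2 H_1$ (not merely in $H_1$), since by IA-ness $\beta(h_2)(h_1)h_1^{-1} \in \Gamma_2 H_1$; an induction then shows $(\Gamma_j H_2, \Gamma_k H_1) \subset \Gamma_{j+k} H_1$, so that bracketing with elements of $H_2$ does not pollute the intersection of $\Gamma_i H$ with $H_1$ beyond $\Gamma_i H_1$.

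Once these two identities are in place, the quotient $\Gamma_i H/\Gamma_{i+1} H$ is, as an abelian group, the direct sum of $\Gamma_i H_1/\Gamma_{i+1} H_1$ and $\Gamma_i H_2/\Gamma_{i+1} H_2$, giving $\phi_i(H) = \phi_i(H_1) + \phi_i(H_2)$. Applied iteratively this gives $\phi_i(F) = \sum_{k=1}^n \phi_i(F(c_k))$, and then
\[
\prod_{i\geq 1}(1-t^i)^{\phi_i(F)} = \prod_{k=1}^n \prod_{i\geq 1}(1-t^i)^{\phi_i(F(c_k))} = \prod_{k=1}^n (1-c_k t),
\]
using Witt's classical formula for each free factor. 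The delicate point throughout is keeping track of the depth in the lower central series when expanding commutators in the semidirect product; all of this is carried out in \cite{FRLCS} and \cite{CohSuc}, and the IA hypothesis is exactly what is needed to promote the naive inclusion $\Gamma_i H_1 \subset \Gamma_i H \cap H_1$ into an equality.
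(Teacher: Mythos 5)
Your proposal is correct and follows essentially the same route as the paper: the identity is obtained from the additivity $\Gamma_i F/\Gamma_{i+1}F \cong \oplus_k \Gamma_i F(c_k)/\Gamma_{i+1}F(c_k)$ together with the Witt/Magnus formula $(1-ct)=\prod_{i\geq1}(1-t^i)^{\phi_i(F(c))}$ for free groups. The only difference is that the paper simply cites Falk--Randell (Theorem 3.1 of \cite{FRLCS}) for the additivity, whereas you sketch its proof (the semidirect splitting $\Gamma_i H=\Gamma_i H_1\rtimes\Gamma_i H_2$ via the IA hypothesis), which is consistent with that reference.
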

The formula follows from the fact that $\Gamma_i F/\Gamma_{i+1}F= \oplus_{l=1}^{m} \Gamma_i F(c_l)/\Gamma_{i+1}F(c_l)$ (\cite{FRLCS}, Theorem 3.1) and the formula (\cite{CombGpTh}, p. 330) known for free groups $(1-ct)= \prod_{i\geq 1} (1-t^i)^{\phi_i (F(c))}$.
An explicit formula for $\phi_i(F(c))$ is known (\cite{CombGpTh}):
$$ \phi_i(F(c))=\frac{1}{i}\sum_{j\vert i} \mu(j)c^{i/j},$$
where $\mu$ is the Möbius function. Therefore, the constants $\phi_i(F)$ in the proposition are given by:
\begin{equation}\label{eq mob}
\phi_i(F)=\sum_{l=1}^n \phi_i(F(c_l))=\frac{1}{i}\sum_{l=1}^n\sum_{j\vert i} \mu(j)c_l^{i/j},
\end{equation}
where $\mu$ is the Möbius function.
\begin{remark}\label{rmk POP}
In \cite{CohSuc}, it is shown that the homology with integer coefficients of $F$ as in the proposition, is free as an abelian group and that the Poincaré series of $F$ is given by $\prod_{k=1}^n (1+c_{k}t)$.
\end{remark}\subsection{Orientation preserving group actions in genus $0$}\label{S1}
Let $a_H: H\times X_H\to X_H$ and $a_{H'}: H'\times X_{H'}\to X_{H'}$ be actions by homeomorphisms of finite groups $H$ and $H'$ on topological spaces $X_H$ and $X_{H'}$. We say that the actions $a_H$ and $a_{H'}$ are equivalent if there exists an isomorphism $f:H\to H'$ and a homeomorphism $g: X_H \to X_{H'}$ such that $ga_{H'}(f\times g)=a_H$. The irregular points of $H$ (with respect to $a_H$) are the elements of $X_H$ with non-trivial stabilizer.\\\\
Recall that the action of $\GL(\C^2)$ on $\C^2$ is compatible to the projection $\C^2\to \PP^1,(z,w)\mapsto [z:w]$ and therefore we have an induced action of $\GL(\C^2)$ on $\PP^1$ given for $A\in \GL(\C^2)$ by:
$$ A\cdot [z:w]=[A_1(z,w): A_2(z,w)] \quad \text{where $A(z,w)=(A_1(z,w) , A_2(z,w))$},$$
The mapping $[z:w]\mapsto A\cdot [z:w]$ is called a homography. It is common to only give the homography in the chart $[z:1]$, i.e. $z\mapsto \frac{A_1(z,1)}{A_2(z,1)}$. The kernel of this action (a) is the Center $C$ of $\GL(\C^2)$ consisting of homotheties and hence the induced action of the projective linear group $\PGL(\C^2)=\GL(\C^2)/C$ on $\PP^1$ is faithful. The group $\PGL(\C^2)$ is also called the homography group of $\PP^1$.\\\\
The following facts are well known. We give proofs or references for completeness.
\begin{proposition}
The group of automorphisms (biholomorphic self maps) of $\PP^1$ is $\PGL(\C^2)$ acting by homorgraphy and every element of $\PGL(\C^2)$ fixes at least a point in $\PP^1$.
\end{proposition}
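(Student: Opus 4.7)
The plan is to prove the two assertions separately. The inclusion $\PGL(\C^2)\hookrightarrow \Aut(\PP^1)$ is essentially immediate: every $A\in\GL(\C^2)$ induces a homography of $\PP^1$ which is biholomorphic, with inverse the homography of $A^{-1}$, and this descends to an injective group homomorphism $\PGL(\C^2)\to\Aut(\PP^1)$ because the kernel of the $\GL(\C^2)$-action on $\PP^1$ consists exactly of the scalar matrices. The substance of the statement is therefore the surjectivity of this homomorphism.

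For surjectivity I would take an arbitrary biholomorphism $f:\PP^1\to\PP^1$ and invoke the classical fact that every holomorphic map $\PP^1\to\PP^1$ is a rational function. Concretely, the preimage $f^{-1}(\infty)$ is a closed discrete subset of the compact space $\PP^1$, hence finite, so in the affine chart $z=[z:1]$ the map $f$ is meromorphic on $\C$ with finitely many poles (including possibly at $\infty$). Standard complex analysis then forces $f(z)=P(z)/Q(z)$ for coprime polynomials $P,Q$. The generic cardinality of a fiber of $f$ equals $\max(\deg P,\deg Q)$, and bijectivity of $f$ forces this integer to equal $1$. Thus $f(z)=(az+b)/(cz+d)$ with $ad-bc\neq 0$, i.e.\ $f$ is the homography attached to $\begin{pmatrix}a&b\\c&d\end{pmatrix}\in\GL(\C^2)$, as desired.

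For the fixed-point assertion, let $[A]\in\PGL(\C^2)$ be represented by $A\in\GL(\C^2)$. The characteristic polynomial of $A$ is a nonconstant polynomial of degree two over $\C$, so by the fundamental theorem of algebra it admits a root $\lambda$, giving a nonzero eigenvector $v\in\C^2$ with $Av=\lambda v$. The point $[v]\in\PP^1$ then satisfies $A\cdot[v]=[\lambda v]=[v]$ and is fixed by the homography of $A$.

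No step here constitutes a genuine obstacle; both assertions are classical. The only ingredient deserving care is the identification of holomorphic self-maps of $\PP^1$ with rational functions, which I would simply cite from a standard reference on Riemann surfaces (e.g.\ Forster) rather than redo in full detail, since the paper only states that proofs or references are provided for completeness.
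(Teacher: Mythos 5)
Your proposal is correct. For the fixed-point assertion you use exactly the paper's argument (an element of $\GL(\C^2)$ has an eigenvector, whose class in $\PP^1$ is fixed). For the identification $\Aut(\PP^1)=\PGL(\C^2)$ the paper simply cites a textbook reference, whereas you spell out the standard proof: faithfulness since the kernel of the $\GL(\C^2)$-action is the scalars, and surjectivity because a biholomorphism of $\PP^1$ is meromorphic with finitely many poles, hence a rational function $P/Q$, whose generic fiber has $\max(\deg P,\deg Q)$ points, so injectivity forces degree one and $f$ is a homography. This is the same classical route the cited source takes, just written out; nothing is missing, and citing the rational-function characterization rather than reproving it is entirely appropriate here.
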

\begin{proof}
For the first statement one can find a proof in \cite{Lieb} (Cf. p. 83). The second statement follows from the fact that an element of $\GL(\C^2)$ have at least one eigenvector.
\end{proof}
\begin{proposition}\label{prop class}
Let $G$ be a finite subgroup of $\PGL(\C^2)$.
\begin{itemize}
\item[1)] The group $G$ is either cyclic, diherdral or isomorphic to $\mathfrak{A}_4, \mathfrak{S}_4$ or $\mathfrak{A}_5$, and all these groups occur.
\item[2)] Two isomorphic finite subgroups of $\PGL(\C^2)$ are conjugate.
\item[3)] The action of $G$ on $\PP^1$ is equivalent to one of these actions:
\begin{itemize}
\item The action a group $G_R$ generated by a rotation of finite order on the $2$-sphere.
\item The action of $\langle G_R,r \rangle$ on $S^2$, where $G_R$ is as above and $r$ is a rotation of order $2$ with axis orthogonal to the axis of rotation of $G_R$ (different choices of $r$ give equivalent actions).
\item The action of the isometry group of a Platonic solid ($\mathfrak{A}_4, \mathfrak{S}_4$ or $\mathfrak{A}_5$) on the surface of the corresponding solid which is a topological $2$-sphere.
\end{itemize}
\item[4)] Every non-trivial element of $G$ fixes exactly two points.
\item[5)] If $G$ is not trivial, then the number of points with non-trivial stabilizer (irregular points) is $2$ if $G$ is cyclic or $2+\vert G \vert$ otherwise. These points form $2$ orbits if $G$ is cyclic and $3$ orbits otherwise.
\end{itemize}
\end{proposition}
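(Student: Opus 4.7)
The plan is to reduce the classification to the well-known description of finite rotation groups of $S^2$ by first conjugating $G$ into $\PSU(2) \simeq \SO(3)$. Consider the surjection $\SL(\C^2) \twoheadrightarrow \PGL(\C^2)$ with kernel $\{\pm I\}$, and let $\hat G \subset \SL(\C^2)$ be the preimage of $G$; it is finite of order $|G|$ or $2|G|$. Averaging the standard Hermitian form $H_0$ on $\C^2$ as
\[
H(v,w) := \sum_{g \in \hat G} H_0(gv, gw)
\]
yields a $\hat G$-invariant positive definite Hermitian form. Hence $\hat G$ is conjugate in $\GL(\C^2)$ to a subgroup of $\SU(2)$, and projecting shows that $G$ is conjugate in $\PGL(\C^2)$ to a subgroup of $\PSU(2) \simeq \SO(3)$. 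This reduces parts $(1)$, $(2)$ and $(3)$ to the classical Klein classification of finite subgroups of $\SO(3)$ acting on $S^2$, together with the classical fact that isomorphic finite subgroups of $\SO(3)$ are already conjugate in $\SO(3)$.

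For $(4)$, a non-trivial $[A] \in G$ has finite order, so $A \in \GL(\C^2)$ is annihilated by some $X^n - 1$, which has simple roots; hence $A$ is diagonalizable. Non-triviality in $\PGL(\C^2)$ forces $A$ to be non-scalar, so it has two distinct eigenlines, which are exactly the two fixed points of $[A]$ in $\PP^1$.

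For $(5)$, if $G$ is cyclic generated by $g$ then every non-trivial power of $g$ shares the same two eigenlines, so the irregular points are these two fixed points, each forming its own singleton $G$-orbit. If $G$ is non-cyclic, let the irregular orbits have stabilizer orders $n_1 \leq \cdots \leq n_k$ with $n_i \geq 2$. Counting pairs (non-trivial element, fixed point) in two ways using $(4)$ yields
\[
2(|G|-1) \;=\; \sum_{i=1}^{k} \frac{|G|}{n_i}(n_i - 1), \qquad \text{i.e.} \qquad \sum_{i=1}^{k}\bigl(1 - \tfrac{1}{n_i}\bigr) \;=\; 2 - \tfrac{2}{|G|}.
\]
Since each summand is at least $1/2$, one gets $k \leq 3$; the cases $k \leq 2$ force $G$ to act with one or two global fixed points and hence to be cyclic, so $k = 3$. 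The classical integer solutions of $\sum 1/n_i = 1 + 2/|G|$ are $(2,2,n)$, $(2,3,3)$, $(2,3,4)$, $(2,3,5)$, and in each case the total number of irregular points equals $\sum_i |G|/n_i = |G| + 2$.

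The only step with any real content is the unitarization in the first paragraph; after that, everything is a direct consequence of the classical Klein classification and a short Burnside-style count, so I do not foresee any serious obstacle.
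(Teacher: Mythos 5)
Your proposal is correct and follows essentially the same route as the paper: the decisive step in both is the averaging/unitarization argument that conjugates the preimage of $G$ in $\SL(\C^2)$ into $\SU(\C^2)$, hence $G$ into $\PSU(\C^2)\simeq \mathrm{SO}(\R^3)$, after which everything reduces to the classical facts about finite rotation groups of $S^2$. The only difference is that where the paper simply cites parts (4) and (5) for $\mathrm{SO}(\R^3)$, you reprove them directly via diagonalizability and a Burnside-style fixed-point count, which is a correct and harmless supplement.
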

\begin{proof}
The diffeomorphism $S^2\to \PP^1$ obtained from stereographic projections gives an identification between the groups $\mathrm{SO}(\R^3)$ and $\PSU(\C^2)$ (\cite{Beard}, p.63). The facts above hold for $\mathrm{SO}(\R^3)$ (\cite{Mart}, p.184) and one only needs to show that every finite subgroup of $\PGL(\C^2)$ is conjugate to a subgroup of $\PSU(\C^2)$. The inclusion $\PSL(\C^2)\to \PGL(\C^2)$ is an isomorphism and the preimage of a finite subgroup of $\PSL(\C^2)$ in $\SL(\C^2)$ is a finite subgroup. Let $G$ be a finite subgroup of $\SL(\C^2)$ and $f$ be the standard Hermitian form on $\C^2$. The group $G$ is an isometry group for the non-degenerate Hermitian form $f_G:= \sum_{g\in G} g^*f$, where $g^*f$ is the pullback of $f$ by $g$. Both $f$ and $f_G$ are non-degenerate. Hence, we have isomorphism of Hermitian spaces $(\C^2,f) \overset{T}{\to} (\C^2,f_G)$ and the group $TGT^{-1}$ is a subgroup of $\SU(\C^2)$. We have proved the proposition.
\end{proof}
The following proposition is a special case of proposition \ref{prop Fresult} proved in the appendix.
\begin{proposition}\label{prop class ext}
Let $H$ be finite group acting by orientation preserving homeomorphisms on $S^2\setminus Y$, where $Y$ is a finite subset of $S^2$. There exists a finite subgroup $G$ of $\PGL(\C^2)$ such that the action of $G$ by homography on $\PP^1 \setminus Z$, for a given finite $Z \subset \PP^1$ stable under the action of $G$, is equivalent to the action of $H$ on $S^2\setminus Y$.
\end{proposition}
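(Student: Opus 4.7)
The plan is to deduce the proposition from a more general statement: if $S$ is a closed oriented surface, $Y\subset S$ is finite, and $H$ is a finite group acting on $S\setminus Y$ by orientation preserving homeomorphisms, then the action extends continuously to $S$ and there exists a complex structure on $S$ for which the extended $H$-action is holomorphic. Specializing to $S=S^2$ and combining this with the proposition above identifying $\Aut(\PP^1)$ with $\PGL(\C^2)$ acting by homographies yields the result: uniformization produces a biholomorphism $\varphi:(S^2,J)\to\PP^1$, and then $G:=\varphi H\varphi^{-1}\subset\Aut(\PP^1)=\PGL(\C^2)$ is a finite homography group whose action on $\PP^1\setminus Z$, with $Z:=\varphi(Y)$, is equivalent to the original action of $H$ on $S^2\setminus Y$.

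I would prove the general statement in two steps. First, I extend the action across the punctures. Since $Y$ is finite, the ends of $S\setminus Y$ are in canonical bijection with the points of $Y$, and $H$ permutes these ends, hence permutes $Y$. Declaring the extension on $Y$ to be this permutation yields a continuous extension; continuity at each $y\in Y$ is verified on small punctured disks, using finiteness of $H$ together with the fact that every $h\in H$ maps a punctured neighborhood basis of $y$ homeomorphically onto a punctured neighborhood basis of $h\cdot y$. Second, I construct an $H$-invariant complex structure on $S$ compatible with the orientation: starting from an arbitrary Riemannian metric $g$ on $S$, I set $\bar g:=\frac{1}{|H|}\sum_{h\in H}h^{*}g$. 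The metric $\bar g$ is $H$-invariant, and in real dimension $2$ the pair consisting of its conformal class and the orientation determines an $H$-invariant almost complex structure $J$, which is automatically integrable; hence $(S,J)$ is a compact Riemann surface on which $H$ acts by biholomorphisms.

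The main obstacle is the smoothing step implicit in the second half of the argument: metric averaging presupposes that $H$ acts smoothly with respect to a fixed smooth structure on $S$, whereas the hypothesis only gives a topological action. For finite groups acting on surfaces this upgrade is standard but not entirely trivial; one can either invoke a Kerékjártó-type theorem asserting that every finite orientation preserving topological action on a closed surface is conjugate to a smooth one, or, in the $S^2$ case, use directly the classical fact that every finite orientation-preserving subgroup of the homeomorphism group of $S^2$ is topologically conjugate to a subgroup of $\mathrm{SO}(3)$. Either reduction brings us into the smooth setting where the averaging argument applies verbatim.

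Finally, with $\varphi$, $G$ and $Z$ as above, the set $Z$ is $G$-stable because $Y$ is stable under the extended $H$-action, and $\varphi$ restricts to a homeomorphism $S^2\setminus Y\to\PP^1\setminus Z$ intertwining the two actions through the isomorphism $H\to G$ given by $h\mapsto\varphi h\varphi^{-1}$. This yields the required equivalence of actions.
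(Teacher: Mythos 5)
Your overall route is the same as the paper's: extend the action of $H$ across the punctures to all of $S^2$, equip $S^2$ with a complex structure making the extended action holomorphic, identify $(S^2,J)$ with $\PP^1$ by uniformization, and use $\Aut(\PP^1)=\PGL(\C^2)$ to conjugate $H$ to a finite homography group $G$ acting on $\PP^1\setminus Z$ with $Z=\varphi(Y)$. This is exactly how the paper proceeds: the proposition is deduced as a special case of the general appendix statement (Proposition~\ref{prop Fresult}), and your puncture-extension step and final conjugation step match the paper's.

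Where you genuinely differ is in producing the invariant complex structure, and that is where your argument has the soft spot you yourself flag. Averaging a Riemannian metric presupposes a smooth action, while the hypothesis is only topological, and your two proposed repairs both import theorems whose content is essentially the point at issue: the fact that every finite orientation-preserving subgroup of the homeomorphism group of $S^2$ is topologically conjugate into $\mathrm{SO}(3)$ is, up to the puncture extension and the identification $\mathrm{SO}(3)\simeq\PSU(\C^2)\subset\PGL(\C^2)$, precisely the statement being proved, so invoking it turns the proof into a citation; and a general Kerékjártó-type equivariant smoothing theorem for finite actions on closed surfaces is of comparable depth (in particular it is not obtained by smoothing each homeomorphism separately, so it really is an external black box). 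The paper's appendix avoids smoothing altogether: using only the disk-level theorem that a finite-order orientation-preserving homeomorphism of a closed disk is conjugate to a rotation, it shows the stabilizers are cyclic with rotation local models, that the quotient $S/H$ is a closed surface and the quotient map is a covering away from the finitely many irregular points, then puts a complex structure on $S/H$, lifts it through the quotient map, and uses removable singularities to conclude that $H$ acts holomorphically. So your plan is correct in outline, but to make it self-contained you should either supply a genuine reference for the equivariant smoothing statement or replace the averaging step by this quotient-and-lift argument.
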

\section{Orbit configuration spaces in genus $0$ associated to finite groups}
In the first subsection, we recall the definition of orbit configuration spaces, a fibration theorem for these spaces, and use the results of subsection \ref{S1}, to show that the orbit configuration space $C_n^H(S^2\setminus Y)$ where $H$ is a finite group acting freely by orientation preserving homeomorphisms on $S^2$ minus a finite set $Y$, is homeomorphic to $C_n^G(\PP^1\setminus Z)$, where $G$ is a homography group and $Z$ is a given finite set. In the second subsection, we prove that the orbit configuration spaces $C_n^G(\PP^1\setminus Z)$ correspond to the complement of a singular hypersurface in $\C^n$ (except when $(G,Z)=(\{1\},\emptyset)$). We then define loops generating the fundamental group of $C_n^G(\PP^1\setminus Z)$ , in subsection \ref{S23}.
\subsection{Orbit configuration spaces of the $2$-sphere}\label{S21}
Let $H$ be a group acting by homeomorphisms on a topological manifold $M$. The orbit configuration space of (ordered) $n$-points of $M$ with respect to $H$ is the topological subspace of $M^n$:
$$ C_n^H(M)= \{(p_1,\dots,p_n)\in M^n\vert p_i\neq H\cdot p_j, \:\:\text{for $1\leq i\neq j \leq n$}\}, $$
for $n\geq 1$, and by convention $C_0^H(M)$ is a point. When $H$ is trivial, the space is the classical configuration space of ordered $n$-points of $M$.\\\\
The space $C_n^H(M)$ is naturally equipped with a topological action of the semidirect product $H^n\rtimes \mathfrak{S}_n$. The action is given by the datum:
$$g_i \cdot (p_1,\dots,p_n)= (p_1,\dots, p_{i-1},g\cdot p_i, p_{i+1},\dots,p_n)$$ $$\text{and} \quad \sigma\cdot (p_1,\dots,p_n)= (p_{\sigma^{-1}(1)},\dots,p_{\sigma^{-1}(n)}), $$
for $i\in[1,n], g\in G,\sigma \in \mathfrak{S}_n$ and where $g_i=(1,\dots,1,g,1,\dots,1)$ with $g$ at the $i$-th position.
\begin{theorem}[\cite{Xico}]\label{prop fib}
If $H$ is a finite group acting freely by homeomorphism on a boundaryless manifold $M$. For $n\geq k \geq 0$, the projection $C_n^H(M)\to C_k^H(M)$ on the first $k$ coordinates is a locally trivial fibration.
\end{theorem}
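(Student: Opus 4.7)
The plan is to mimic the Fadell--Neuwirth argument for classical configuration spaces, incorporating the $H$-action. First I would reduce to the case $k=n-1$: since $M$ (hence each $C_j^H(M)$) is a locally compact Hausdorff, paracompact space, a composition of locally trivial fibrations with such base is again locally trivial, so it suffices to show that the coordinate-forgetting map $\pi:C_n^H(M)\to C_{n-1}^H(M)$ is a locally trivial fibration. The fiber over $\mathbf{p}=(p_1,\ldots,p_{n-1})$ is evidently $M\setminus H\cdot\{p_1,\ldots,p_{n-1}\}$.

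Next I would exploit that a free action of a finite group $H$ on a Hausdorff manifold is automatically properly discontinuous. This yields, around any $\mathbf{p}\in C_{n-1}^H(M)$, a choice of pairwise disjoint open Euclidean charts $U_1,\ldots,U_{n-1}$ with $p_i\in U_i$ and such that the $\vert H\vert(n-1)$ translates $\{h\cdot U_i\}_{h\in H,\,1\leq i\leq n-1}$ are mutually disjoint. The product $V:=U_1\times\cdots\times U_{n-1}$ is then an open neighborhood of $\mathbf{p}$ contained in $C_{n-1}^H(M)$, and will serve as the trivializing neighborhood.

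The key step is to produce, for each $\mathbf{q}=(q_1,\ldots,q_{n-1})\in V$, an $H$-equivariant self-homeomorphism $\Phi_{\mathbf{q}}:M\to M$ depending continuously on $\mathbf{q}$ with $\Phi_{\mathbf{q}}(p_i)=q_i$. I would do this chart by chart: on each $U_i$, identified with an open ball, use a standard compactly supported isotopy of the disk that moves $p_i$ to $q_i$ (for instance, by extending the constant vector field $q_i-p_i$ via a bump function, or by a M\"obius-type diffeotopy), extended by the identity outside $U_i$. To make $\Phi_{\mathbf{q}}$ equivariant, transport this construction to each translate $h\cdot U_i$ by conjugating with the action of $h$; the disjointness of all translates guarantees that these local definitions glue consistently, and continuity in $\mathbf{q}$ is clear from the construction. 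Having $\Phi_{\mathbf{q}}$, the map
\begin{equation*}
\pi^{-1}(V)\longrightarrow V\times\bigl(M\setminus H\cdot\{p_1,\ldots,p_{n-1}\}\bigr),\quad (q_1,\ldots,q_n)\longmapsto\bigl(\mathbf{q},\Phi_{\mathbf{q}}^{-1}(q_n)\bigr),
\end{equation*}
is a homeomorphism with inverse $(\mathbf{q},x)\mapsto(q_1,\ldots,q_{n-1},\Phi_{\mathbf{q}}(x))$, because equivariance of $\Phi_{\mathbf{q}}$ ensures that $\Phi_{\mathbf{q}}^{-1}$ sends $M\setminus H\cdot\{q_1,\ldots,q_{n-1}\}$ into $M\setminus H\cdot\{p_1,\ldots,p_{n-1}\}$.

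I expect the main technical obstacle to be precisely this equivariant isotopy construction: the bookkeeping required to ensure simultaneously that $\Phi_{\mathbf{q}}$ commutes with every $h\in H$, is supported in the union of the translates, moves each $p_i$ to the prescribed $q_i$, and varies continuously with $\mathbf{q}$. All of these can be arranged thanks to the disjointness of the translates $h\cdot U_i$ provided by freeness and finiteness of $H$; without one of these two hypotheses, local orbits could accumulate and break the construction.
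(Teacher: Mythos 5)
The paper does not prove this statement: it is quoted from \cite{Xico}, and the proof there is exactly the equivariant Fadell--Neuwirth argument that forms the core of your proposal, so your main construction (disjoint translates $h\cdot U_i$ obtained from freeness and finiteness of $H$, an $H$-equivariant ambient homeomorphism $\Phi_{\mathbf q}$ supported in those translates, conjugation by $h$ to enforce equivariance, and the explicit trivialization) is the right one and is sound.

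There is, however, a genuine gap in your first step. The claim that ``a composition of locally trivial fibrations with locally compact, paracompact base is again locally trivial'' is not a standard fact and is not true in the generality you invoke: paracompactness of the base upgrades a locally trivial map to a Hurewicz fibration, and composites of Hurewicz fibrations are Hurewicz fibrations, but local triviality of a composite of fiber bundles does not follow (the restriction of the top bundle over a trivializing chart $U\times F$ of the bottom one need not be a product of $U$ with a bundle over $F$); Ehresmann-type arguments are unavailable here since the fibers are non-compact. So as written you only obtain the case $k=n-1$, plus a Hurewicz fibration statement for general $k$, which is weaker than the theorem. The fix is cheap and is what the cited source does: run your construction directly for arbitrary $k\geq 1$. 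Choose the disjoint translates $h\cdot U_i$ for $i\in[1,k]$, set $V=U_1\times\cdots\times U_k$, build the same equivariant $\Phi_{\mathbf q}$ moving $p_i$ to $q_i$ for $i\leq k$, and trivialize by
\begin{equation*}
\pi^{-1}(V)\to V\times C_{n-k}^H\bigl(M\setminus H\cdot\{p_1,\dots,p_k\}\bigr),\qquad (q_1,\dots,q_n)\mapsto\bigl((q_1,\dots,q_k),\Phi_{\mathbf q}^{-1}(q_{k+1}),\dots,\Phi_{\mathbf q}^{-1}(q_n)\bigr),
\end{equation*}
noting that equivariance and bijectivity of $\Phi_{\mathbf q}$ guarantee that the last $n-k$ entries avoid the orbits of $p_1,\dots,p_k$ and of each other exactly when the original point lies in $C_n^H(M)$, and that continuity in $\mathbf q$ is the same as in your one-point case. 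Also state explicitly that the local disk isotopy is defined for every target $q_i$ in the open chart and is jointly continuous (the standard radial push does this); with these adjustments your argument is complete and identifies the fiber as $C_{n-k}^H$ of the punctured manifold, as used later in the paper.
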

Here we study orbit configuration spaces associated to a finite group $H$ acting freely by orientation preserving homeomorphisms on the $2$-sphere $S^2\simeq \PP^1$ minus a finite number of points:
$$C_n^H(S^2\setminus Y)=\{(p_1,\dots,p_n)\in (S^2\setminus Y)^n\vert p_i\neq H\cdot p_j, \:\:\text{for $1\leq i\neq j \leq n$}\}, $$
where $Y\subset S^2$ is finite. It follows from proposition \ref{prop class} and proposition \ref{prop class ext}, that:
\begin{proposition}\label{prop bir}
The space $C_n^H(S^2\setminus Y)$ for $n,H$ and $Y$ as in the previous paragraph, is homeomorphic to $C_n^G(\PP^1\setminus Z)$, where $G\simeq H$ is a finite subgroup of $\PGL(\C^2)$ acting naturally (by homographies) on $\PP^1$ and $Z\subset \PP^1$ is finite, stable under the action of $G$, and contains the irregular points of $G$. Moreover, $Z\neq \emptyset$ if $G\neq \{1\}$.
\end{proposition}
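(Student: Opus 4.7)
The plan is to deduce the homeomorphism directly from Proposition \ref{prop class ext} by transporting the orbit configuration space through the equivalence of actions, and then to read off the side conditions on $Z$ from the hypothesis that the $H$-action is free.

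First I would invoke Proposition \ref{prop class ext} applied to the action of $H$ on $S^2 \setminus Y$. This provides a finite subgroup $G \subset \PGL(\C^2)$, a finite $G$-stable subset $Z \subset \PP^1$, an isomorphism $f: H \to G$, and a homeomorphism $g: S^2\setminus Y \to \PP^1 \setminus Z$ satisfying the equivariance relation $g(h \cdot p) = f(h) \cdot g(p)$ for all $h \in H$ and $p \in S^2 \setminus Y$. I would then consider the induced homeomorphism $g^n: (S^2\setminus Y)^n \to (\PP^1\setminus Z)^n$ and check that it restricts to a homeomorphism of the orbit configuration subspaces. Concretely, for $(p_1,\dots,p_n) \in (S^2\setminus Y)^n$ the condition $p_i \neq h \cdot p_j$ for some $h \in H$ is equivalent, via $g$ and the equivariance, to $g(p_i) \neq f(h) \cdot g(p_j)$; as $f$ is a bijection onto $G$, this shows that $g^n$ sends $C_n^H(S^2\setminus Y)$ bijectively onto $C_n^G(\PP^1\setminus Z)$, and since $g^n$ is a homeomorphism its restriction is as well.

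Next I would verify the two side conditions on $Z$. For the inclusion of the irregular points, observe that the equivalence of actions transports freeness: since $H$ acts freely on $S^2\setminus Y$, the group $G$ acts freely on $\PP^1\setminus Z$. Hence any point of $\PP^1$ with non-trivial $G$-stabilizer must lie in $Z$, i.e. $Z$ contains the irregular points of the homography action of $G$. For the last claim, suppose $G \neq \{1\}$ and pick a non-trivial element; by Proposition \ref{prop class}(4) it fixes exactly two points of $\PP^1$, so irregular points exist, and by the previous sentence they belong to $Z$, forcing $Z \neq \emptyset$.

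There is no real obstacle here beyond careful bookkeeping: the content of the statement is entirely packaged in Propositions \ref{prop class} and \ref{prop class ext}, and the work consists only in transporting the defining conditions of $C_n^H$ through the equivariant homeomorphism and in using freeness of the action to place the irregular points inside $Z$.
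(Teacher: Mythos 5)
Your proposal is correct and matches the paper's approach: the paper gives no separate argument and simply derives the statement from Propositions \ref{prop class} and \ref{prop class ext}, which is exactly what you do, with the routine verification that the equivariant homeomorphism transports the orbit configuration conditions and that freeness forces the irregular points into $Z$ (and hence $Z\neq\emptyset$ when $G\neq\{1\}$, by Proposition \ref{prop class}(4)) spelled out explicitly.
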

In the sequel $G$ and $Z$ are as in the above proposition.
\begin{remark}\label{rmk infty}
Let $G$ be a subgroup of $\PGL(\C^2)$ and $h$ a homorgraphy. The map $h^{\times n}$ induces a biholomorphism $C_n^G(\PP^1\setminus Z) \to C_n^{hGh^{-1}}(\PP^1 \setminus h(Z))$. In particular, for $(G,Z)\neq (\{1\}, \emptyset)$ we can assume up to applying a biholomorphism that $\infty \in Z$ (as an irregular point if $G\neq \{1\}$). Moreover, two configuration spaces of the form $C_n^G(\PP^1\setminus Z)$ associated to two isomorphic subgroups of $\PGL(\C^2)$ can be biholomorphic depending on the choice of the stable sets, since two isomorphic subgroups of $\PGL(\C^2)$ are conjugate.
\end{remark}
If $G=\{1\}$, the spaces $C_n^G(\PP^1\setminus Z)$ correspond to classical configuration space : the configuration space of the sphere if $Z$ is empty, the configuration space of the plane if $\vert Z \vert =1$, the configuration space of the plane minus $1$ point if $\vert Z \vert=1$ etc..\\\\
The space $\PP^1\setminus Z$ and the action of $G$ satisfy the conditions of proposition \ref{prop fib}. Hence, for $n\geq 1$, we have a fiber bundle (the base is paracompact):
$$ F_n \to C_n^G(\PP^1\setminus Z) \overset{\pi_n}{\to} C_{n-1}^G(\PP^1\setminus Z)$$
where $\pi_n$ is the projection on the first $n-1$ coordinates and the fiber $F_n$ is a $2$-sphere minus $\vert Z \vert + (n-1) \vert G \vert$ points. The orbit configuration space $C_n^G(\PP^1\setminus Z)$ is path connected.
\subsection{Orbit configuration spaces of the $2$-sphere as complements of hypersurfaces}\label{S22}
Take $G$ and $Z$ as in the previous subsection. The space $C_n^G(\PP^1\setminus Z)$ is the complement in $(\PP^1)^n$ of the hypersurfaces:
$$D_{ij}(g)= \{(p_1,\dots,p_n) \in (\PP^1)^n \vert p_i=g\cdot p_j\} \quad D_{kk}(p)=\{(p_1,\dots,p_n) \in (\PP^1)^n \vert p_k=p\}$$
for $i,j,k\in [1,n],g\in G$ and $p\in Z$ with $i<j$ (to avoid repetitions). Putting coordinates $([z_1:w_1],\dots, [z_n:w_n])$ on $(\PP^1)^n$, setting $g([z:w])=[g_1(z,w):g_2(z,w)]$ for $g\in G$, and $p=[p_z:p_w]$ for $p\in Z$, we get that $D_{ij}(g)$ and $D_{ii}(p)$ are respectively the zero loci of the polynomials :
$$w_ig_1(z_j,w_j) -z_i g_2(z_j,w_j) \quad \text{and}\quad w_ip_z -z_ip_w.$$
Up to a biholomorphism, we can assume that (Cf. Remark \ref{rmk infty}): $\infty \in Z$ and that the other elements of $Z$ lie in $\C$. Under these assumptions $C_n^G(\PP^1\setminus Z)$ is the complement in $\C^n$ of the hypersurface of equation:
$$ f(z_1,\dots,z_n)= \underset{1\leq k \leq n, q\in Z\setminus \{\infty\}}{\prod} P_{ii}^q(z_1,\dots,z_{n}) \underset{1\leq i<j \leq n, g\in G}{\prod} P_{ij}^\alpha(z_1,\dots,z_{n})=0,$$
where $ P_{ii}^q=z_i-q \quad \text{and} \quad P_{ij}^g=g_1(z_j,1)-z_ig_2(z_j,1)$.\\\\
If $G$ is the cyclic of order $m\geq 1$. We can assume, up to conjugacy-biholomorphism (Cf. Remark \ref{rmk infty}), that $G$ is generated by $z\mapsto \zeta z $ where $\zeta$ is a $m$-th primitive root of the unity. In that case, the hypersurface is given by the equation:
\begin{align*}
f(z_1,\dots,z_{n+1})&= \underset{\underset{ q \in (Z\setminus \{\infty\})}{1\leq i \leq n}}{\prod} (z_i-q) \underset{\underset{0\leq k <m}{1\leq i<j \leq n}}{\prod} (z_i-\zeta^k z_j)=0.
\end{align*}
This arrangement is central (the intersection of all the hyperplanes is not empty) if and only if $Z=\{0,\infty\}$. In the case $G$ not cyclic, the "collection" of the irreducible components of the hypersurface $f=0$ can not be mapped homeomorphically to an arrangement of hyperplanes. Indeed, if $G$ is not cyclic, there exists a $g\in G$ not stabilizing the infinity. The intersection $\{P_{ij}^1=0\}\cap \{P_{ij}^g=0\}$ for such a $g$ contains exactly two points corresponding to the points fixed by $g$. Hence, $\{P_{ij}^1=0\}\cap \{P_{ij}^g=0\}$ is not homeomorphic to the intersection of $2$ hyperplanes, since the latter is always connected.

\subsection{Generators of the fundamental group}\label{S23}
We will construct loops of $\Cng$, for $n\geq 1$. We fix a base point $P_n=(p_1,\dots,p_n)\in \Cng$ and we set for $i\in[1,n]$:
$$Y_i= \Pec \cup Y_{i,G}, \text{ where } Y_{i,G}=\cup_{j\neq i\in [1,n], } G\cdot p_j.$$
The space $\PP^1\setminus Y_i$ is naturally homeomorphic to the fiber of $\pi_n:C_{n+1}^G(\PP^1\setminus Z) \to \Cng$ over $P_n$. We endow $\PP^1$ with its natural orientation. For $q \in Y_i$, let $\gamma_i(q)$ bee a smooth simple anticlockwise oriented loop of $\PP^1$ based at $p_i$, avoiding $q$ and bounding a closed disc $D(q)$ such that $D(q)\cap Y_i=q$.\\\\
We choose the loops $\gamma_i(q)$ so that they generate the fundamental group of $\PP^1\setminus Y_i$ based at $p_i$.
\begin{definition}
For $i,j\in[1,n], g\in G$ and $p \in \Pec$ we define the smooth loops $x_{ij}^g$ (for $i\neq j$) and $x_{ii}^p$ of $C_n^G(\Pe)$ based at $P_n$ by the following:
$$ x_{ij}^g(t)=(p_1,\dots,p_{i-1}, \gamma_i(g\cdot p_j)(t), p_{i+1},\dots,p_n),$$
$$x_{ii}^p(t)=(p_1,\dots,p_{i-1}, \gamma_i(p)(t), p_{i+1},\dots,p_n), $$
for $t\in[0,1]$.\\
\end{definition}
\begin{proposition}\label{gen loop}
For any $p_\infty \in Z$, the loops $x_{ij}^g$ and $x_{kk}^p$ for $g\in G, p\in Z$ and $ i, j, k \in [1,n]$ such that $i< j$ and $p\neq p_\infty$, generate $\pi_1(\Cng,P_n)$.
\end{proposition}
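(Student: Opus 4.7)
The natural plan is to prove the statement by induction on $n$, using Theorem~\ref{prop fib} together with the tail of the homotopy long exact sequence of the fibration it supplies.

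For the base case $n=1$ we have $C_1^G(\PP^1\setminus Z)=\PP^1\setminus Z$, a sphere with finitely many punctures. Its fundamental group based at $p_1$ is free on the loops $\gamma_1(p)$, $p\in Z$, modulo the single sphere relation that the cyclically ordered product of all of them is null-homotopic in $\PP^1$. This relation lets us eliminate the generator $\gamma_1(p_\infty)=x_{11}^{p_\infty}$, so the remaining family $\{x_{11}^p : p\in Z,\ p\neq p_\infty\}$ generates $\pi_1(C_1^G(\PP^1\setminus Z),p_1)$.

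For the inductive step, the delicate point is to use the projection onto the \emph{last} $n-1$ coordinates rather than the first, so that the loops contributed by the fiber satisfy the required $i<j$ convention. Precomposing with the coordinate permutation given by the $\mathfrak{S}_n$-action of Subsection~\ref{S21} and applying Theorem~\ref{prop fib} produces a locally trivial fibration
\[
\PP^1\setminus Y_1\;\longrightarrow\; C_n^G(\PP^1\setminus Z)\;\longrightarrow\; C_{n-1}^G(\PP^1\setminus Z),
\]
with $Y_1=Z\cup\bigcup_{j=2}^n G\cdot p_j$, whose base is parametrized by $(p_2,\dots,p_n)$. Since $(G,Z)\neq(\{1\},\emptyset)$ (implicit in fixing $p_\infty\in Z$), the fiber is $\PP^1$ minus at least one point, hence path-connected, so the homotopy long exact sequence yields
\[
\pi_1(\PP^1\setminus Y_1,p_1)\;\longrightarrow\;\pi_1(C_n^G(\PP^1\setminus Z),P_n)\;\longrightarrow\;\pi_1(C_{n-1}^G(\PP^1\setminus Z),P_{n-1})\;\longrightarrow\;0,
\]
so the middle group is generated by the image of $\pi_1(\PP^1\setminus Y_1)$ together with any chosen lifts of generators of the base.

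Both contributions are visibly of the form claimed. The same sphere argument as in the base case shows that $\pi_1(\PP^1\setminus Y_1,p_1)$ is generated by $\{\gamma_1(q):q\in Y_1\setminus\{p_\infty\}\}$, which unfolds to $\{x_{11}^p:p\in Z\setminus\{p_\infty\}\}\cup\{x_{1j}^g:2\le j\le n,\ g\in G\}$. Applying the inductive hypothesis to the base, after relabelling $q_k=p_{k+1}$, produces the generating family $\{x_{ij}^g:2\le i<j\le n,\ g\in G\}\cup\{x_{kk}^p:2\le k\le n,\ p\in Z\setminus\{p_\infty\}\}$; these lift tautologically to loops of the same form in $C_n^G(\PP^1\setminus Z)$ by keeping the first coordinate constant at $p_1$. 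Putting the two families together yields exactly the list in the statement. The main (and only) obstacle is the bookkeeping forced by the $i<j$ convention, which is what dictates projecting onto the last $n-1$ coordinates; once this choice is made, the argument is formal.
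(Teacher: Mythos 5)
Your proof is correct and follows essentially the same route as the paper, which likewise argues by induction on $n$ using the homotopy long exact sequence of the fibrations of Theorem \ref{prop fib} (the paper simply refers to Proposition 3.3 of \cite{MM} for the details); projecting onto the last $n-1$ coordinates is just a convenient bookkeeping device for the convention $i<j$. Two small points deserve a line each: the elimination of $\gamma_1(p_\infty)$ (and, in the fiber $\PP^1\setminus Y_1$, of the meridians around the points of $G\cdot p_1$ when passing to the base) should be justified either by the filling-in-a-puncture argument or by taking the $\gamma_i(q)$ to be a geometric basis satisfying the product relation you invoke, and the ``tautological lifts'' exist because the loops $\gamma_i(q)$ for $i\geq 2$ already avoid $G\cdot p_1$ and hence form a valid system of the required kind for $C_{n-1}^G(\PP^1\setminus Z)$, so the inductive hypothesis may be applied to exactly these projected loops.
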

\begin{proof}
The result can be obtained by induction using the long exact sequences of the fibrations $C_k^G(\PP^1\setminus Z) \to C_{k-1}^G(\PP^1\setminus Z)$ (see for instance proposition 3.3 of \cite{MM}).
\end{proof}
We have seen that $\Cng$ is biholomorphic to $\C^n \setminus \{f=0\}$, where $\{f=0\}$ is a hypersurface. The loops in the proposition are meridian loops with respect to irreducible components of the hypersurface $\{f=0\}$ and hence generate $H^1(\Cng, \Z)$ (\cite{HG}, p. 455). Moreover, they generate $H^1(\Cng,\Z)$ freely (\cite{DimB}, p. 102). This will also follow from other elements in the next sections.

\section{Differential forms on the orbit configuration spaces}\label{S3}
In subsection \ref{S31}, we introduce, for $(G,Z)\neq (\{1\},\emptyset)$, closed holomorphic $1$-forms on $X_n:=C_n^G(\PP^1\setminus Z)$ and consider, for $R\subset \C$ an unital ring, the $R$-algebra $\Omega_D^*(X_n)_R$ of differential forms on $X_n$ generated by these forms. We also study the action of $G^n\rtimes \mathfrak{S}_n$ on $\Omega_D^*(X_n)_R$. The action of $G^n\rtimes \mathfrak{S}_n$ is used in subsection \ref{S32} to deduce relations (defined over $\Z$) in $\Omega_D^2(X_n)_R$. The relations allow us to give a family of forms spanning $\Omega_{D}^*(X_n)_R$ as an $R$-module and to define, by generators and relations, an $R$-algebra $A_n(R)$ equipped with a natural surjective map to $\Omega_D^*(X_n)_R$. In the last subsection, we study the periods of $\Omega_D^1(X_n)_R$ (values of integrals of elements of $\Omega_D^1(X_n)_R$ on $1$-homology classes of $X_n$). The results on periods will be used in the next section.\\\\
Till the end of this section, we assume that $(G,Z)$ is different from $(\{1\},\emptyset)$.
\subsection{Differential forms and the algebra $\Omega_D^*(C_n^G(\PP^1\setminus Z))_R$}\label{S31}
For $A \in \PGL(\C^2)$ with matrix $\begin{pmatrix}a& b\\c & d\end{pmatrix}$ in the canonical basis of $\C^2$, set:
$$P_A(x,y)=(cy+d)(x-A\cdot y)=x(cy+d)-(ay+b),$$ where $A\cdot y=\frac{ay+b}{cy+d}$. We assign to $h\in \PGL_2(\C)$ a 1-form
\begin{align}\label{omegah}
\omega^{h}(x,y):=d\log(P_{A}(x,y))=\frac{dP_A(x,y)}{P_A(x,y)}, \end{align}
where $A$ is a lift of $h$ to $ \PGL(\C^2)$. The definition of $\omega^{h}(x,y)$ do not depend on the choice of the lift, since $P_{\lambda A}=\lambda P_{A}$ for $\lambda$ a scalar. The form $\omega^h(x,y)$ is holomorphic over $\C^2\setminus \{(x,y)\in\C^2 \vert x=h(y)\}$. Indeed, the set of zeros of the polynomial $P_A$ is exactly the set $ \{(x,y)\in\C^2 \vert x=h(y)\}$. We also define for $p \in \C$ the holomorphic $1$-form on $\C \setminus \{p\}$:
\begin{align}\label{omegap}
\omega^p(x):=d\log(x-p).
\end{align}
Both $\omega^p(x)$ and $\omega^{h}(x,y)$ are closed forms.
\\\\
We pick a $p_\infty \in Z$ and chose a homography $h_Z \in \PGL(\C^2)$ mapping $p_\infty$ to $\infty$. If $Z$ contains $\infty$ we take $p_\infty=\infty$ and $h_Z=\mathrm{id}$. The map $h_Z^{\times n}$ induces a biholomorphism $C_n^G(\PP^1\setminus Z) \to C_n^{h_ZGh_Z^{-1}}(\PP^1\setminus h_Z(Z))$ and $\PP^1 \setminus h_{Z}(Z)$ is $\C\setminus (Z\setminus \infty)$. We will introduce differential forms on $C_n^G(\PP^1\setminus Z)$, using the coordinates $([z_1: 1],\dots, [z_n:1])$. The forms will depend on the choice of $p_\infty$ and $h_Z$, but as we will see later on the algebra of complex valued forms generated by these forms is independent of these choices.
\begin{definition}\label{def forms}
\begin{itemize}
\item[1)] For $g\in G, p \in Z\setminus \{p_\infty\}$ and $i,j,k\in [1,n]$, with $i<j$, we define the holomorphic closed forms $\omega_{ij}^g$ and $\omega_{ii}^p$ of $C_n^G(\PP^1\setminus Z)$, by:
$$\omega_{kk}^p=\frac{1}{2i\pi} (h_Z^{\times n})^* \omega^{h_Z(p)}(z_k) \quad \text{and} \quad \omega_{ij}^g=\frac{1}{2i\pi} (h_Z^{\times n})^*\omega^{h_Zgh_Z^{-1}}(z_i,z_j) ,$$
where the star $*$ is for pullback, $\omega^{h_Z(p)}$ and $ \omega^{h_Zgh_Z^{-1}}$ are as in (\ref{omegap}) and (\ref{omegah}), and $\omega_i^{\infty}=0$ (and hence $\omega_i^{p_\infty}=0$) by convention.
\item[2)] For $R$ an unital subring of $\C$, we define $\Omega_D^*(C_n^G(\PP^1\setminus Z))_R$ to be the $R$-subalgebra of complex valued forms on $C_n^G(\PP^1\setminus Z)$, generated by the $1$-forms defined in $(1)$ with grading induced by the degree of forms.
\end{itemize}
\end{definition}
For instance $\omega_{ij}^1=\frac{1}{2i\pi}d\log(z_i-z_j)$ if $h_Z=\mathrm{id}$.
\begin{lemma}\label{PAB}
For $A$ and $B\in \GL(\C^2)$. We have:
$$ P_A(x,B\cdot y)= \frac{P_{AB}(x,y)}{D_B(y)} \quad \text{and} \quad P_A(B\cdot x, y)=\mathrm{det}(B)\frac{P_{B^{-1}A}(x,y)}{D_B(x)},$$
for $B\cdot z=\frac{b_{1,1}z+b_{1,2}}{b_{2,1}z+b_{2,2}}$ and $D_B(z)=b_{2,1}z+b_{2,2}$, where $b_{i,j}$ $(i,j\in[1,2])$ is the $i,j$ entry of the matrix of $B$ in the canonical basis of $\C^2$ are nonzero complex numbers.
\end{lemma}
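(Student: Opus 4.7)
The plan is to prove both identities simultaneously by rewriting $P_A$ as a $2\times 2$ determinant involving the natural $\GL(\C^2)$-action on the column vectors $(x,1)^T$ and $(y,1)^T$, after which each identity reduces to a one-line multilinearity argument.

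First I would fix matrix lifts of $A$ and $B$ in $\GL(\C^2)$; this is harmless because $P_{\lambda A}=\lambda P_A$, so the identities being homogeneous of matching degree, they are independent of the chosen lifts. Writing $N_A(y)=ay+b$ and $D_A(y)=cy+d$, the column vector $A\cdot (y,1)^T$ is exactly $(N_A(y),D_A(y))^T$, and one checks immediately that
\[
P_A(x,y)\;=\;\det\!\begin{pmatrix} x & N_A(y)\\ 1 & D_A(y)\end{pmatrix}\;=\;\det\!\bigl((x,1)^T\;\mid\; A\cdot(y,1)^T\bigr),
\]
where $\mid$ denotes column concatenation. This matches the definition $P_A(x,y)=x(cy+d)-(ay+b)$.

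The key fact used for both parts is the affine-vs-homogeneous identity $B\cdot(z,1)^T=D_B(z)\cdot(B\cdot z,1)^T$, which merely records that the vectors $(N_B(z),D_B(z))^T$ and $(B\cdot z,1)^T$ represent the same projective point. For the first identity I would substitute this in the second column of the determinant expressing $P_A(x,B\cdot y)$; multilinearity pulls out the scalar $D_B(y)^{-1}$, and the matrix composition reassembles as $A\cdot B(y,1)^T=(AB)(y,1)^T$, yielding $P_A(x,B\cdot y)=P_{AB}(x,y)/D_B(y)$.

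For the second identity the analogous substitution applied to the first column gives $P_A(B\cdot x,y)=D_B(x)^{-1}\det\!\bigl(B(x,1)^T\mid A(y,1)^T\bigr)$. I would then rewrite the second column as $A(y,1)^T=B\cdot B^{-1}A(y,1)^T$, which lets me factor $B$ out of both columns of the determinant; this contributes a factor $\det(B)$ and leaves $P_{B^{-1}A}(x,y)$ behind, producing the claimed formula. I do not foresee any real obstacle here: the only subtlety is the bookkeeping of lifts, and in particular insisting that $B^{-1}$ denotes the genuine matrix inverse rather than merely a matrix representing the inverse homography up to a scalar; this is precisely what forces the $\det(B)$ factor to appear in the second formula and not in the first.
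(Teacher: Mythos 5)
Your proof is correct, and it takes a different (more structural) route than the paper's. The paper simply instructs the reader to write out the matrices of $A$ and $B$ explicitly and compute both sides of each equation by brute force; your argument instead packages $P_A$ as the determinant $\det\bigl((x,1)^T \mid A\,(y,1)^T\bigr)$ and exploits the homogeneous identity $B\,(z,1)^T=D_B(z)\,(B\cdot z,1)^T$ together with multilinearity of the determinant. This buys conceptual clarity: it makes visible exactly why the first formula involves $AB$ with no determinant factor (the scalar $D_B(y)$ is pulled out of the second column and the product $A\cdot B$ reassembles automatically), while the second formula acquires the factor $\mathrm{det}(B)$ because $B$ must be factored out of \emph{both} columns to leave $P_{B^{-1}A}$ behind. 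Two small remarks: since the lemma is stated for genuine matrices $A,B\in\GL(\C^2)$, your preliminary discussion of lifts is unnecessary (though harmless); and the identity $B\,(z,1)^T=D_B(z)\,(B\cdot z,1)^T$ only makes sense where $D_B(z)\neq 0$, so strictly speaking you verify the formulas on a dense open set and conclude by equality of rational functions --- worth a half-sentence, but not a gap.
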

\begin{proof}
To prove the lemma one can assume that the matrices of $A$ and $B$ in the canonical basis are $\begin{pmatrix}a& b\\c & d \end{pmatrix}$ and $\begin{pmatrix}a'& b'\\c' & d' \end{pmatrix}$, then compute both sides of each equation.
\end{proof}
Recall that the group $G^n\rtimes \mathfrak{S}_n$ acts on $C_n^G(\PP^1\setminus Z)$. The action is holomorphic and hence the group acts on the right (by pullback) on complex valued differential forms on $C_n^G(\PP^1\setminus Z)$.
For $h$ a homography of $\PP^1$ and $i \in [1,n]$, we denote by $h_i$ the bijection $(\PP^1)^n\to (\PP^1)^n$ acting by $h$ on the $i$-th coordinate and acting trivially on the other coordinates.
\begin{proposition}\label{lem act}
For $g,h,f,\in G, p\in (Z\setminus \{p_\infty\}), \sigma \in \mathfrak{S}_n$ and $i,j,k,l\in[1,n]$ with $i\neq j$, we have:
$$ (h_if_j)^*\omega_{ij}^{g}=\omega_{ij}^{h^{-1}gf}-\omega_{ii}^{h^{-1}(p_\infty)}-\omega_{jj}^{f^{-1}( p_\infty)}, \quad h_k^*\omega_{kk}^p=\omega_{kk}^{h^{-1}( p)}-\omega_ {kk}^{h^{-1}(p_\infty)} ,$$
$$h_k^*\omega_{ij}^{g}=\omega_{ij}^{g}\quad \text{if $k\neq i,j$},\quad h_l^*\omega_{kk}^p=\omega_{kk}^p,$$
if $l\neq k $, and
$$ \sigma^*\omega_{ij}^g=\omega_{\sigma^{-1}(i) \sigma^{-1}(j)}^g, \quad \sigma^*\omega_{kk}^p=\omega_{\sigma^{-1}(k) \sigma^{-1}(k)}^p.$$
\end{proposition}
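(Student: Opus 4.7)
The plan is to perform each pullback computation directly, reducing every case to Lemma~\ref{PAB} together with the single key identity
\begin{equation*}
d\log D_B(z) \;=\; 2i\pi \cdot \omega_{ii}^{B^{-1}(\infty)},
\end{equation*}
valid for every lift $B \in \GL(\C^2)$ of an element of $G$ in the reduced setting $h_Z = \mathrm{id}$, and handling uniformly the degenerate case where $B$ fixes $\infty$, in which $D_B$ is a constant and both sides vanish by the convention $\omega_i^\infty = 0$. This identity itself holds because $D_B(z) = b_{2,1}(z - B^{-1}(\infty))$ whenever $b_{2,1}\neq 0$.

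First I would reduce to the case $h_Z = \mathrm{id}$, so that $p_\infty = \infty$ and all forms are genuine logarithmic derivatives of rational functions in the $z_i$. The biholomorphism $h_Z^{\times n}\colon X_n \to C_n^{h_Z G h_Z^{-1}}(\PP^1 \setminus h_Z(Z))$ intertwines the action of $G^n\rtimes \mathfrak{S}_n$ with that of $(h_Z G h_Z^{-1})^n\rtimes \mathfrak{S}_n$, and by Definition~\ref{def forms} the forms on $X_n$ are the $(h_Z^{\times n})^*$-pullbacks of the analogous forms on the target. Once verified in the reduced setting, the formulas transfer verbatim, since pullback commutes with the action in the intertwined sense and sends the transported indices $\tilde{h}^{-1}(\infty) = h_Z(h^{-1}(p_\infty))$ back to $h^{-1}(p_\infty)$.

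For the main formula $(h_if_j)^*\omega_{ij}^g$ with $i<j$, I would lift $g,h,f$ to $A,H,F \in \GL(\C^2)$ and apply the two parts of Lemma~\ref{PAB} in succession to obtain
\begin{equation*}
P_A(h\cdot z_i,\, f\cdot z_j) \;=\; \det(H)\,\frac{P_{H^{-1}AF}(z_i,z_j)}{D_H(z_i)\,D_F(z_j)}.
\end{equation*}
Applying $\tfrac{1}{2i\pi}d\log$ kills the scalar $\det(H)$, produces $\omega_{ij}^{h^{-1}gf}$ from the numerator, and produces $-\omega_{ii}^{h^{-1}(\infty)}-\omega_{jj}^{f^{-1}(\infty)}$ from the two denominators via the key identity. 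The formula for $h_k^*\omega_{kk}^p$ comes from the direct rewriting: when $h$ has matrix $\binom{a\ b}{c\ d}$, one finds $h\cdot z_k - p = (a - pc)(z_k - h^{-1}(p))/D_H(z_k)$, so $d\log$ yields $\omega_{kk}^{h^{-1}(p)} - \omega_{kk}^{h^{-1}(\infty)}$. The invariance formulas $h_k^*\omega_{ij}^g = \omega_{ij}^g$ (for $k\notin\{i,j\}$) and $h_l^*\omega_{kk}^p = \omega_{kk}^p$ (for $l\neq k$) are immediate, since the form in question has no dependence on the affected coordinate. The $\sigma^*$ formulas follow at once from $(\sigma\cdot(p_1,\dots,p_n))_k = p_{\sigma^{-1}(k)}$, which makes $\sigma^*$ act by the substitution $z_k \mapsto z_{\sigma^{-1}(k)}$ and merely relabels the indices.

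The computation is essentially bookkeeping once the key identity is in hand; the main point to handle carefully is the uniform treatment of the degenerate case $B(\infty)=\infty$ across all four formulas, which is absorbed into the convention $\omega_i^\infty = 0$. A secondary notational point is the interpretation of $\omega_{\sigma^{-1}(i)\sigma^{-1}(j)}^g$ when $\sigma^{-1}(i)>\sigma^{-1}(j)$: one reorders indices via the natural identity $\omega_{lk}^g = \omega_{kl}^{g^{-1}}$, which follows from $P_A(y,x)$ and $P_{A^{-1}}(x,y)$ being proportional up to a nonvanishing scalar factor with trivial $d\log$.
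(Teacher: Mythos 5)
Your proposal is correct and follows essentially the same route as the paper: reduce to $h_Z=\mathrm{id}$, apply Lemma~\ref{PAB} twice and take $\frac{1}{2i\pi}d\log$, identifying the denominator terms $d\log D_{\tilde h}(z_i)$, $d\log D_{\tilde f}(z_j)$ with $(2i\pi)\omega_{ii}^{h^{-1}\cdot\infty}$, $(2i\pi)\omega_{jj}^{f^{-1}\cdot\infty}$ (your explicit ``key identity'' is exactly the identification the paper performs implicitly, degenerate case included via the convention $\omega^{\infty}=0$). The only minor divergence is that for $h_k^*\omega_{kk}^p$ you factor $h\cdot z_k-p$ directly, whereas the paper specializes the first formula at $f=g=1$, $z_j=p$; both amount to the same computation.
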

\begin{proof}
One deduces the proposition from the case $\infty \in Z$ (i.e. $p_\infty=\infty$ and $h_Z=\mathrm{id}$), by pulling back the equations. We hence only prove the proposition for $\infty \in Z$. For $v\in \{f,g,h\}$, we chose a lift $\tilde{v}$ of $v$ to $\GL(\C^2)$ . Using the definition of $\omega_{ij}^g$, the definition of a pullback, then by applying lemma \ref{PAB} twice and identifying the terms we get:
\begin{align*}(2i\pi)(h_if_j)^*\omega_{ij}^g&=d\log(P_{\tilde{g}}(\tilde{h}\cdot z_i,\tilde{f}\cdot z_j))\\&=d\log(P_{\tilde{h}^{-1}\tilde{g}\tilde{f}}(z_i,z_j))-d\log(D_{\tilde{f}}(z_j))-d\log(D_{\tilde{h}}(z_i))\\
&=(2i\pi)(\omega_{ij}^{h^{-1}gf}-\omega_ {ii}^{h^{-1}\cdot \infty}-\omega_{jj}^{f^{-1} \cdot \infty}).
\end{align*}
This proves the first equation of the proposition. We now prove the second equation of the proposition. The equation is true for $p=\infty$. Assume $p\neq \infty$. Setting $f=g=1$ in the first equation of the proposition, we get $h_i^*\omega_{ij}^1=\omega_{ij}^{h^{-1}}-\omega_{ii}^{h^{-1}\cdot \infty}$ and then by setting $z_j=p$, we find $h_i^*\omega_{ii}^p=\frac{1}{2i\pi}d\log(P_{\tilde{h}^{-1}}(z_i,p))-\omega_{ii}^{h^{-1}\cdot\infty}$.
The second equation of the proposition (for $p\neq \infty$) follows, since $$P_{\tilde{h}^{-1}}(z_i,p)=\begin{cases}(cp+d)(z_i-h^{-1}\cdot p)& \text{if $h^{-1}\cdot p \neq \infty $}\\ ap+b& \text{otherwise}\end{cases}=(2i\pi)\omega_{ii}^{h^{-1} \cdot p},$$
if $\tilde{h}^{-1}=\begin{pmatrix}a& b\\c & d\end{pmatrix}$. We have proved the second equation of the proposition. The remaining equations are straightforward. We have proved the proposition.
\end{proof}
\begin{corollary}
The algebra $\Omega_D^*(C_n^G(\PP^1\setminus Z))_R$ is stable under the action of $G^n\rtimes \mathfrak{S}_n$.
\end{corollary}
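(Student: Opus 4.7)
The plan is to reduce the question to checking stability on the generating set, which is handled directly by Proposition \ref{lem act}.

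First I would observe that since $G^n\rtimes \mathfrak{S}_n$ acts on $X_n=C_n^G(\PP^1\setminus Z)$ by holomorphic (in fact biregular) self-maps, the induced pullback action on complex valued differential forms preserves the wedge product and the degree. In other words, for every $\varphi\in G^n\rtimes \mathfrak{S}_n$ the map $\varphi^*$ is a graded $\C$-algebra endomorphism of $\Omega^*(X_n)_\C$. Consequently, to prove that the $R$-subalgebra $\Omega_D^*(X_n)_R$ is stable under $\varphi^*$, it suffices to verify that the images of a generating family of the $R$-algebra lie in $\Omega_D^*(X_n)_R$.

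Next I would use the fact that $\Omega_D^*(X_n)_R$ is, by Definition \ref{def forms}, generated as an $R$-algebra by the $1$-forms $\omega_{ij}^g$ (for $i<j$, $g\in G$) and $\omega_{kk}^p$ (for $k\in[1,n]$, $p\in Z\setminus\{p_\infty\}$). The group $G^n\rtimes \mathfrak{S}_n$ is itself generated by the elements $h_i$ for $h\in G$, $i\in[1,n]$, together with the permutations $\sigma\in\mathfrak{S}_n$. Hence it is enough to check that each of $h_i^*\omega_{ij}^g$, $h_i^*\omega_{kk}^p$, $\sigma^*\omega_{ij}^g$ and $\sigma^*\omega_{kk}^p$ belongs to $\Omega_D^*(X_n)_R$.

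This is precisely what is computed in Proposition \ref{lem act}: each such pullback equals a $\Z$-linear combination (with coefficients in $\{-1,0,1\}\subset R$) of the generating $1$-forms $\omega_{ij}^{g'}$ and $\omega_{kk}^{p'}$, possibly augmented by a single term of the form $\omega_{ii}^{h^{-1}(p_\infty)}$ or $\omega_{jj}^{f^{-1}(p_\infty)}$ which, by the convention $\omega_{ii}^{p_\infty}=0$, is either a generator or zero. In every case the result lies in $\Omega_D^1(X_n)_R\subset \Omega_D^*(X_n)_R$. Combined with the first paragraph, this proves the corollary.

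No serious obstacle is expected here: the entire argument is formal once Proposition \ref{lem act} is in hand. The only point requiring minor care is verifying that when $h^{-1}(p_\infty)=p_\infty$ (respectively $h^{-1}(p)=p_\infty$), the corresponding term $\omega_{ii}^{p_\infty}$ appearing in the right-hand sides of the formulas is interpreted as $0$ per the convention of Definition \ref{def forms}, so that the output still lies in the $R$-span of the chosen generators.
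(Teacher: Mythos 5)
Your proposal is correct and follows essentially the same (implicit) argument as the paper, which states this corollary without proof as an immediate consequence of Proposition \ref{lem act}: pullback by any element of $G^n\rtimes \mathfrak{S}_n$ is an algebra endomorphism, so stability need only be checked on the generating $1$-forms, where the formulas of Proposition \ref{lem act} (together with the convention $\omega_{kk}^{p_\infty}=0$) give the answer. The only micro-detail you leave tacit is that $\sigma^*\omega_{ij}^g=\omega_{\sigma^{-1}(i)\sigma^{-1}(j)}^g$ may have its indices in reversed order, in which case one identifies it with the generator $\omega_{\sigma^{-1}(j)\sigma^{-1}(i)}^{g^{-1}}$ via the elementary identity $\omega_{ij}^g=\omega_{ji}^{g^{-1}}$ (proved in the paper by the computation $P_A(y,x)=-\det(A)P_{A^{-1}}(x,y)$), which does not affect the validity of the argument.
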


\begin{proposition}
Algebras $\Omega_D^*(C_n^G(\PP^1\setminus Z))_R$ obtained for different choices of $p_\infty$ and $h_Z$ are equal.
\end{proposition}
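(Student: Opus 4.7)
The plan is to express the generators of $\Omega_D^*(X_n)_R$ for an arbitrary admissible choice of $(p_\infty, h_Z)$ in a form that makes the dependence on the choice transparent, and then to show that the generators coming from two different choices are explicit $R$-linear combinations of one another. Given a second choice $(p'_\infty, h'_Z)$, the key device is the homography $h := h'_Z \circ h_Z^{-1}$, which satisfies $h'_Z = h \circ h_Z$ and hence $((h'_Z)^{\times n})^* = (h_Z^{\times n})^* \circ (h^{\times n})^*$. Crucially, $h^{-1}(\infty) = h_Z(p'_\infty)$, so writing $h = \begin{pmatrix} a_h & b_h \\ c_h & d_h \end{pmatrix}$, the denominator $D_h(w) = c_h w + d_h$ vanishes at $w = h_Z(p'_\infty)$ when $c_h \neq 0$ (equivalently, when $p'_\infty \neq p_\infty$) and is a nonzero constant otherwise.

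The main computation evaluates $(h^{\times n})^*$ on each type of generating form. For the point-type generators, the homography identity $h(u) - h(v) = \det(h)(u - v)/(D_h(u) D_h(v))$ yields
$$(h^{\times n})^* d\log(w_k - h'_Z(p)) = d\log(w_k - h_Z(p)) - d\log D_h(w_k),$$
while for the pair-type generators, two applications of Lemma \ref{PAB} (mirroring the proof of Proposition \ref{lem act}) give
$$(h^{\times n})^* d\log(P_{h'_Z g (h'_Z)^{-1}}(w_i, w_j)) = d\log(P_{h_Z g h_Z^{-1}}(w_i, w_j)) - d\log D_h(w_i) - d\log D_h(w_j).$$
Applying $\frac{1}{2i\pi}(h_Z^{\times n})^*$ and using $D_h(h_Z(z)) = D_{h'_Z}(z)/D_{h_Z}(z)$ (which follows from $M_{h'_Z} = M_h M_{h_Z}$), one identifies $\frac{1}{2i\pi}(h_Z^{\times n})^* d\log D_h(w_k)$ as $\omega_{kk}^{p'_\infty}$ in Choice $1$ when $p'_\infty \neq p_\infty$, and as $0$ otherwise (consistently with the convention $\omega_{kk}^{p_\infty} = 0$). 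This produces the uniform identities
$$\omega_{kk}^{'p} = \omega_{kk}^p - \omega_{kk}^{p'_\infty}, \qquad \omega_{ij}^{'g} = \omega_{ij}^g - \omega_{ii}^{p'_\infty} - \omega_{jj}^{p'_\infty},$$
exhibiting every Choice $2$ generator as an $R$-linear combination of Choice $1$ generators; swapping the roles of the two choices gives the reverse inclusion, hence the equality of the two algebras.

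The main obstacle is not conceptual but rather the careful bookkeeping across the case distinctions ($p_\infty = \infty$ or not, $p'_\infty = p_\infty$ or not). These are uniformly absorbed by the convention $\omega_{kk}^{p_\infty} = 0$ together with the observation that $c_h = 0$ if and only if $p'_\infty = p_\infty$, which makes $d\log D_h$ vanish precisely in the case where the correction term should be absent.
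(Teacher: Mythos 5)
Your argument is correct and is essentially the paper's: the same two-fold application of Lemma \ref{PAB} that drives Proposition \ref{lem act}, here applied to the transition homography $h'_Zh_Z^{-1}$, gives your identities $\omega_{kk}^{\prime\,p}=\omega_{kk}^{p}-\omega_{kk}^{p'_\infty}$ and $\omega_{ij}^{\prime\,g}=\omega_{ij}^{g}-\omega_{ii}^{p'_\infty}-\omega_{jj}^{p'_\infty}$ (the degenerate cases $p=p_\infty$ and $p'_\infty=p_\infty$ being absorbed, as you say, by the convention $\omega_{kk}^{p_\infty}=0$), and since the coefficients are integers this yields both containments and hence equality. The paper packages the same computation by taking $h=f=h_Z$ in the computations of Proposition \ref{lem act} to rewrite each generator intrinsically as $\omega^{g}(z_i,z_j)-\omega^{p_\infty}(z_i)-\omega^{p_\infty}(z_j)$, resp.\ $\omega^{q}(z_k)-\omega^{p_\infty}(z_k)$, and then passes to a generating set depending only on an arbitrary $q_0\in Z$; your direct comparison via the transition homography plus symmetry is the same argument in relative rather than absolute form, and your transition formulas agree with the paper's intrinsic expressions.
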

\begin{proof}
Recall that if $\infty \in Z$, then we take $h_Z=\mathrm{id}$. Assume $\infty \notin Z$ and fix a choice of $p_\infty$ and $h_Z$. Adapting the computations used to prove the first two equations of the previous proposition, by taking $h=f=h_Z$ and $p=h_Z(q)$, we get:
$$ \omega_{ij}^g=\omega^g(z_i,z_j)-\omega^{p_\infty}(z_i)-\omega^{p_\infty}(z_j) \quad \text{and} \quad \omega_{kk}^q=\omega^q(z_k)-\omega^{p_\infty}(z_k),$$
where $\omega^g, \omega^{p_\infty}$ and $\omega^q$ are as in $(\ref{omegah})$ and $(\ref{omegap})$. Given a $q_0 \in Z$, we can add to the generators above $-\omega_{ll}^{q_0}$ for an (or two) appropriate $l$('s) (we add nothing if $q_0=p_\infty$), to deduce that the algebra $\Omega_D^*(C_n^G(\PP^1\setminus Z))_R$ is generated by the forms:
$$\omega^g(z_i,z_j)-\omega^{q_0}(z_i)-\omega^{q_0}(z_j) \quad \text{and} \quad \omega^q(z_k)-\omega^{q_0}(z_k),$$
for $q\in Z,g\in G$ and $i,j,k\in[1,n]$, with $i\neq j$. This proves the proposition.
\end{proof}
We see (from the proof above) that we can give a definition of forms generating $\Omega_D^*(C_n^G(\PP^1\setminus Z))_R$ without using $h_Z$. Anyway, the pullback formula by $h_Z$ seems to be easier to manipulate.
\subsection{Relations in $\Omega_D^*(C_n^G(\PP^1\setminus Z))_R$ and the algebra $A_n(R)$}\label{S32}
Recall that for $i,j,k $ three distinct integers in $[1,n]$, we have the following relation (\cite{Arn}):
$$\omega_{ij} \wedge \omega_{jk} +\omega_{jk} \wedge \omega_{ik} +\omega_{ik} \wedge \omega_{ij} =0,$$
where $\omega_{st}=d \log(z_s-z_t)$.
\begin{proposition}\label{prop rel}
For $1\leq i \neq j \leq n ,h, g\in G$ and $p,q\in Z\setminus \{\infty\}$, we have:
\begin{equation}\label{rel1}
\omega_{ii}^p \wedge \omega_{ii}^q=0\quad ,\quad \omega_{ij}^g=\omega_{ji}^{g^{-1}},
\end{equation}
\begin{equation}\label{rel2}
\omega_{ij}^h\wedge \omega_{jj}^p=\omega_{ij}^h\wedge\omega_{ii}^{h\cdot p } + \omega_{ii}^{h\cdot p} \wedge \omega_{jj}^p+\omega_{ii}^{h\cdot p_\infty} \wedge \omega_{ij}^h,\end{equation}
\begin{equation}\label{rel3}
\omega_{ik}^h\wedge \omega_{jk}^g=\omega_{ij}^{hg^{-1}}\wedge ( \omega_{jk}^g-\omega_{ik}^h)+ \omega_{jj}^{g\cdot p_\infty} \wedge(\omega_{ij}^{h^{-1}g} -\omega_{jk}^g)+ \omega_{ii}^{h\cdot p_\infty}\wedge (\omega_{ik}^h -\omega_{ij}^{hg^{-1}})+\omega_{ii}^{h\cdot p_\infty}\wedge\omega_{jj}^{g\cdot p_\infty},\end{equation}
for $k\in[1,n]\setminus \{i,j\}$, and
\begin{equation}\label{rel4}
\omega_{ij}^h\wedge \omega_{ij}^g= (\omega_{ii}^{p_1}+\omega_{ii}^{p_2})\wedge (\omega_{ij}^g-\omega_{ij}^h)- \omega_{ii}^{g\cdot p_\infty} \wedge \omega_{ij}^g + \omega_{ii}^{h\cdot p_\infty}\wedge \omega_{ij}^h,
\end{equation}
for $h\neq g$, with $p_1$ and $p_2$ the two points fixed by $hg^{-1}$.
\end{proposition}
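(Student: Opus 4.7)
The strategy is to reduce every relation to the classical Arnold identity recalled at the start of Subsection \ref{S32} and to transport it using the $G^n\rtimes\mathfrak{S}_n$-pullback formulas of Proposition \ref{lem act}. Geometrically, each $\omega_{ij}^g$ and $\omega_{ii}^p$ is a logarithmic $1$-form with residue $\pm 1/(2i\pi)$ along an irreducible component of the complement hypersurface of Subsection \ref{S22}, so the expected quadratic relations are Orlik--Solomon-type syzygies coming from codimension-$2$ strata; the explicit formulas in (\ref{rel2})--(\ref{rel4}) then emerge once one carefully tracks the ``correction'' terms $\omega_{ii}^{h\cdot p_\infty}$ produced by the pullback formulas.

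Relation (\ref{rel1}) is essentially immediate: both $\omega_{ii}^p$ and $\omega_{ii}^q$ are multiples of $dz_i$, so their wedge vanishes, and $\omega_{ij}^g = \omega_{ji}^{g^{-1}}$ follows by a short explicit computation showing that $P_{\tilde g}(x,y)$ and $P_{\tilde g^{-1}}(y,x)$ have proportional logarithmic differentials, readable off Lemma \ref{PAB}. For (\ref{rel2}) and (\ref{rel3}) I would pull back the appropriate classical Arnold identity by an element of $G^n$. For (\ref{rel3}), begin from
$$\omega_{ij}^1\wedge\omega_{jk}^1+\omega_{jk}^1\wedge\omega_{ik}^1+\omega_{ik}^1\wedge\omega_{ij}^1=0,$$
apply $((h^{-1})_i (g^{-1})_j)^*$, expand every pulled-back $\omega_{ab}^1$ via Proposition \ref{lem act}, and cancel every wedge of two forms supported on the same diagonal index by using the first part of (\ref{rel1}). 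For (\ref{rel2}), take as starting point the classical Arnold identity among $\omega_{ij}^1,\omega_{ii}^p,\omega_{jj}^p$ (the three hyperplanes $\{z_i=z_j\}$, $\{z_i=p\}$, $\{z_j=p\}$ meeting on the codimension-$2$ stratum $z_i=z_j=p$) and pull it back by $(h^{-1})_i$.

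The main obstacle is (\ref{rel4}), because the wedge $\omega_{ij}^h\wedge\omega_{ij}^g$ does not arise as the pullback of a single classical Arnold identity: the divisors $D_{ij}(h)$ and $D_{ij}(g)$ are non-linear in $\C^n$ and meet in two distinct components. My plan is to exploit Proposition \ref{prop class}(4): since $h\neq g$, the element $hg^{-1}$ fixes exactly two points $p_1,p_2\in\PP^1$, and these are precisely the $z_i$-coordinates of the two components of $D_{ij}(h)\cap D_{ij}(g)$. At each of the two codimension-$2$ strata the three divisors $D_{ij}(h), D_{ij}(g), D_{ii}(p_\ell)$ meet with linearly dependent tangent forms (automatic for three hyperplanes in two variables), yielding an Arnold-type identity among $\omega_{ij}^h, \omega_{ij}^g, \omega_{ii}^{p_\ell}$ modulo holomorphic error terms; those error terms turn out to be precisely the correction forms $\omega_{ii}^{g\cdot p_\infty}$ and $\omega_{ii}^{h\cdot p_\infty}$ coming from the behaviour of $\omega_{ij}^h, \omega_{ij}^g$ at $\infty$. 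Summing the two local identities over $\ell=1,2$ and rearranging then gives (\ref{rel4}). As a cross-check, one can verify (\ref{rel4}) directly: expand $\omega_{ij}^h\wedge\omega_{ij}^g$ as an explicit rational $2$-form in $dz_i\wedge dz_j$ via the formula $\omega_{ij}^\cdot=\tfrac{1}{2i\pi}d\log P_{\tilde\cdot}$, apply partial fractions to $1/[(z_i-h(z_j))(z_i-g(z_j))]$, and recognise the $z_j$-factor $d\log(h(z_j)-g(z_j))$ as encoding the two forms $\omega_{jj}^{q_1},\omega_{jj}^{q_2}$ at the fixed points $q_\ell$ of $g^{-1}h$ (which are mapped by $h,g$ onto the $p_\ell$).
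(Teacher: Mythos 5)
Your handling of (\ref{rel1})--(\ref{rel3}) is sound and is essentially the paper's own argument: (\ref{rel1}) by inspection of $P_A$, and (\ref{rel2}), (\ref{rel3}) by pulling back the Arnold identity (specialized at $z_k=p$ for (\ref{rel2})) along elements of $G^n$ and expanding with Proposition \ref{lem act}; doing the two pullbacks for (\ref{rel3}) in one shot rather than successively is an immaterial difference.

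The genuine gap is (\ref{rel4}). Your primary argument --- an ``Arnold-type identity modulo holomorphic error terms'' at each of the two codimension-$2$ strata of $D_{ij}(h)\cap D_{ij}(g)$, with the errors declared to be $\omega_{ii}^{h\cdot p_\infty}$ and $\omega_{ii}^{g\cdot p_\infty}$ and the two local identities then ``summed'' --- has no precise content as stated: local linear dependence of the tangent forms only controls the polar part of the $2$-form near each stratum, and identifying the global holomorphic discrepancy (and showing it vanishes) is exactly the hard part; making this rigorous amounts to redoing the full computation. The cross-check you offer does not close this, because it does not produce the relation (\ref{rel4}) as written: partial fractions in $z_i$ turns the cross term into $\frac{h'-g'}{h-g}\bigl(\frac{1}{z_i-h(z_j)}-\frac{1}{z_i-g(z_j)}\bigr)$, so the correction $1$-forms you obtain are $d\log(h(z_j)-g(z_j))$-type forms in the variable $z_j$, supported at the fixed points $q_\ell$ of $g^{-1}h$ (and at $h^{-1}(\infty),g^{-1}(\infty)$), whereas (\ref{rel4}) is expressed through $\omega_{ii}^{p_1}+\omega_{ii}^{p_2}$, forms in $z_i$; since $\omega_{jj}^{q_\ell}\neq\omega_{ii}^{p_\ell}$, the observation $p_\ell=h(q_\ell)=g(q_\ell)$ does not finish the argument (at best, combined with $\omega_{ij}^h=\omega_{ji}^{h^{-1}}$, you would prove the transposed instance of (\ref{rel4}) for $(j,i,h^{-1},g^{-1})$, and the passage to the stated instance still has to be justified). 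The paper's route is much shorter and avoids all of this: restrict the already proved relation (\ref{rel3}) to the partial diagonal $z_j=z_i$; the specialization of $\omega_{ij}^{hg^{-1}}$ is $\frac{1}{2i\pi}d\log P_{\widetilde{hg^{-1}}}(z_i,z_i)$, and since $P_A(x,x)$ has simple roots exactly at the two fixed points of the finite-order element $hg^{-1}$ (this is where Proposition \ref{prop class}(4) enters, as you anticipated), this form equals $\omega_{ii}^{p_1}+\omega_{ii}^{p_2}$; renaming $k$ as $j$ gives (\ref{rel4}) at once.
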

\begin{proof}
The first equation in (\ref{rel1}) is straightforward. We prove the second one. Using the definition of the polynomial $P_A$, we get:
$$P_A(y,x)=y(cx+d)-(ax+b)=-(x(-cy+a)-(dy-b))=-\mathrm{det}(A) P_{A^{-1}}(x,y).$$
This proves that $\omega_{ij}^g=\omega_{ji}^{g^{-1}}$. We now prove (\ref{rel2}). Pulling back the relation given before the proposition by $h_Z^{\times n}$, we obtain: $$\omega_{ij}^1\wedge \omega_{jk}^1+\omega_{jk}^1 \wedge \omega_{ik}^1+\omega_{ik}^1\wedge \omega_{ij}^1=0.$$
Setting $z_k=p$, we get: $$\omega_{ij}^1\wedge \omega_{jj}^p+\omega_{jj}^p \wedge \omega_{ii}^p+\omega_{ii}^p\wedge \omega_{ij}^1=0,$$
which is true for $p=p_\infty$. Pulling back the last equation by $h_i^{-1}$, we find using proposition \ref{lem act}:
$$(\omega_{ij}^h-\omega_{ii}^{h\cdot p_\infty})\wedge \omega_{jj}^p+\omega_{jj}^p \wedge (\omega_{ii}^{h\cdot p}-\omega_{ii}^{h\cdot p_\infty})+(\omega_{ii}^{h\cdot p}-\omega_{ii}^{h\cdot p_\infty})\wedge (\omega_{ij}^{h}-\omega_{ii}^{h\cdot p_\infty})=0.$$
This proves (\ref{rel2}). Applying proposition \ref{lem act} to the pullback by $h_i^{-1}$ of the equation $\omega_{ij}^1\wedge \omega_{jk}^1+\omega_{jk}^1 \wedge \omega_{ik}^1+\omega_{ik}^1\wedge \omega_{ij}^1=0$, we find:
$$ \omega_{ik}^h\wedge \omega_{jk}^1=\omega_{ij}^h\wedge \omega_{jk}^{1}+\omega_{ik}^h\wedge \omega_{ij}^{h}+ \omega_{ii}^{h\cdot p_\infty}\wedge (\omega_{ik}^h-\omega_{ij}^h).$$
Pulling this equation by $g_j^{-1}$, we get:
\begin{align*} \omega_{ik}^h\wedge (\omega_{jk}^g -\omega_{jj}^{g\cdot \infty})=\begin{split} &(\omega_{ij}^{hg^{-1}}-\omega_{jj}^{g\cdot p_\infty})\wedge (\omega_{jk}^g -\omega_{jj}^{g\cdot p_\infty})+\omega_{ik}^h\wedge (\omega_{ij}^{hg^{-1}}-\omega_{jj}^{g\cdot p_\infty})\\
&+ \omega_{ii}^{h\cdot p_\infty}\wedge (\omega_{ik}^h -\omega_{ij}^{hg^{-1}}+\omega_{jj}^{g\cdot p_\infty}).\end{split}
\end{align*}
Simplifying this equation gives (\ref{rel3}).
Replacing $z_j$ with $z_i$ in (\ref{rel3}), for $h\neq g$, we get:
\begin{align*} \omega_{ik}^h\wedge \omega_{ik}^g= \omega_{ii}^{hg^{-1}}\wedge (\omega_{ik}^g-\omega_{ik}^h)- \omega_{ii}^{g\cdot p_\infty} \wedge \omega_{ik}^g + \omega_{ii}^{h\cdot p_\infty}\wedge \omega_{ik}^h,\end{align*}
where $\omega_{ii}^{hg^{-1}}$ is the form obtained by replacing $z_j$ with $z_i$ in $\omega_{ij}^{hg^{-1}}$.
For $a\in \PGL(\C^2)$ of finite order and $A$ a lift of $a$ to $\GL(\C^2)$ the polynomial $P_A(x,x)$ admits simple roots corresponding to the elements $x\in \C$ fixed by $a$. Hence, using the convention $\omega^{\infty}(x)=0$, we have:
$$d \log (P_A(x,x))=\omega^{x_1}(x)+\omega^{x_2}(x),$$
where $x_1,x_2\in \PP^1$ are the two fixed points of $a \in \PGL(\C^2)$ and $\omega^{x_i}(x)=d\log(x-x_i)$ as in equation \ref{omegap}. From this, and the definition of $\omega_{ii}^p$ (for $p\in Z$) we deduce that:
$$ \omega_{ii}^{hg^{-1}}=\omega_{ii}^{p_1}+\omega_{ii}^{p_2},$$
where $p_1$ and $p_2$ are the fixed points of $hg^{-1}$ (and $\omega_{ii}^{p_\infty}=0$ by convention). Replacing $\omega_{ii}^{hg^{-1}}$ by $\omega_{ii}^{p_1}+\omega_{ii}^{p_2}$ in the equation obtained previously, we get:
\begin{align*} \omega_{ik}^h\wedge \omega_{ik}^g= (\omega_{ii}^{p_1}+\omega_{ii}^{p_2})\wedge (\omega_{ik}^g-\omega_{ik}^h)- \omega_{ii}^{g\cdot p_\infty} \wedge \omega_{ik}^g + \omega_{ii}^{h\cdot p_\infty}\wedge \omega_{ik}^h.\end{align*}
We obtain $(\ref{rel4})$ by replacing $k$ with $j$ in the last equation. We have proved the proposition.
\end{proof}
\begin{corollary}
Let $\omega^{\alpha}_{ij}$ and $ \omega^{\beta}_{kl}$ be forms as in definition \ref{def forms}. The product $\omega^{\alpha}_{ij}\wedge \omega^{\beta}_{kl}$, is equal to the sum of products $\varepsilon \omega_{rs}^{\alpha'}\wedge \omega_{tu}^{\beta'}$, where $\varepsilon \in \{1,-1\}$, $\omega_{rs}^{\alpha'}$ and $\omega_{tu}^{\beta'}$ are as in definition \ref{def forms}, $r\leq s, t\leq u$ and $ s<u\leq \mathrm{max}(i,j,k,l)$.
\end{corollary}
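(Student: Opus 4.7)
The plan is to carry out a case analysis on the two second indices $j$ and $l$ of the forms $\omega^{\alpha}_{ij}$ and $\omega^{\beta}_{kl}$ of Definition \ref{def forms} (so $i\le j$ and $k\le l$), applying the relations of Proposition \ref{prop rel} wherever the second indices coincide. Set $M=\max(j,l)=\max(i,j,k,l)$; the goal is to produce a decomposition whose second indices are strictly ordered and bounded by $M$.

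If $j\ne l$, antisymmetry of the wedge product lets us place the factor with the smaller second index on the left, giving a single term with $s<u\le M$ that already lies in the required form. So assume $j=l=m$. If both forms are diagonal ($i=k=m$), the first part of (\ref{rel1}) gives zero. If exactly one is diagonal, say $i<m$ and $k=m$, apply (\ref{rel2}) to $\omega^{\alpha}_{im}\wedge\omega^{\beta}_{mm}$; each of its three right-hand terms involves one factor of the form $\omega^{*}_{ii}$ and one of the form $\omega^{*}_{im}$ or $\omega^{*}_{mm}$, and after at most one antisymmetry swap all of them satisfy $s=i<u\le m$. The symmetric case $i=m$, $k<m$ is handled by first swapping the two original factors.

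It remains to treat $i<m$ and $k<m$ with both forms off-diagonal. When $i=k$, either $\alpha=\beta$ and the wedge vanishes by antisymmetry, or $\alpha\ne\beta$ and (\ref{rel4}) writes the product as a sum of terms of the shape $\omega^{*}_{ii}\wedge\omega^{*}_{im}$, each satisfying the bounds. When $i\ne k$, we may assume $i<k$ after possibly applying the second equation of (\ref{rel1}), and then invoke (\ref{rel3}) with $m$ in the role of $k$. Six of the seven terms produced have second indices strictly ordered and bounded by $m$ on direct inspection. The one exceptional term has the shape $\omega^{*}_{kk}\wedge\omega^{*}_{ik}$ (both second indices equal to $k<m$); swap it by antisymmetry and apply (\ref{rel2}) a second time, whose output consists of terms of the shapes $\omega^{*}_{ii}\wedge\omega^{*}_{ik}$ and $\omega^{*}_{ii}\wedge\omega^{*}_{kk}$, again with second indices strictly ordered and bounded by $k<m$.

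The main obstacle is the bookkeeping in this last case: one must verify that every auxiliary factor $\omega^{*}_{ii}$ introduced by (\ref{rel2}) and (\ref{rel3}) never raises the maximum second index above $M$, and that the single exceptional term produced by (\ref{rel3}) is indeed of the shape to which (\ref{rel2}) applies after a sign flip. Both points follow by direct inspection of the right-hand sides of the relations, and since the secondary reduction strictly decreases the common second index from $m$ to $k$, no further recursion is required.
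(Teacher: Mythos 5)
Your case analysis is correct and is exactly the argument the paper intends: the corollary is stated as an immediate consequence of Proposition \ref{prop rel} together with antisymmetry of the wedge product, and your reduction (ordering by second indices, applying (\ref{rel1})--(\ref{rel4}) when the second indices coincide, and cleaning up the one term $\omega^{*}_{kk}\wedge\omega^{*}_{ik}$ produced by (\ref{rel3}) with a further application of (\ref{rel2})) fills in precisely the bookkeeping the paper leaves implicit. The only nitpick is that reducing to $i<k$ is done by antisymmetry of the wedge (a sign $\varepsilon=-1$) rather than by the second equation of (\ref{rel1}), which only transposes the indices within a single form.
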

\begin{corollary}\label{cor base}
The algebra $\Omega_D^*(C_n^G(\PP^1\setminus Z))_R$ is generated as an $R$-module by the forms:
$$\omega_{i_1j_1}^{\beta_1}\wedge \cdots \wedge \omega_{i_kj_k}^{\beta_k}, \quad 0\leq k \leq n , \quad 1\leq i_k\leq j_k \leq n , \quad j_1<j_2<\cdots <j_k $$
with $\beta_k\in G$ if $i_k\neq j_k$ and $\beta_k\in Z\setminus \{p_\infty\}$ if $i_k=j_k$ (for $k=0$ the product is equal to $1$).
\end{corollary}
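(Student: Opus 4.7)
The plan is a two-stage rewriting built on the antisymmetry from (\ref{rel1}) and the preceding corollary. The algebra $\Omega_D^*(C_n^G(\PP^1\setminus Z))_R$ is generated by the $1$-forms $\omega^g_{ij}$ and $\omega^p_{kk}$, so as an $R$-module it is spanned by wedge products of such $1$-forms. Using $\omega_{ij}^g = \omega_{ji}^{g^{-1}}$, each generating $1$-form can be put into the \emph{normalized} shape $\omega_{ij}^\beta$ with $i\leq j$, so I may assume that my monomials are products $\omega_{i_1 j_1}^{\alpha_1}\wedge\cdots\wedge \omega_{i_k j_k}^{\alpha_k}$ with $i_\ell\leq j_\ell$ for every $\ell$. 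The target is to rewrite any such monomial as an $R$-linear combination of normalized monomials satisfying in addition $j_1<j_2<\cdots<j_k$.

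To each normalized monomial I attach the weight $W$ obtained by listing its second indices $j_1,\ldots,j_k$ in weakly decreasing order, and I compare weights lexicographically. The induction is on $W$. Suppose two factors of the monomial share a common second index $J$; using anticommutativity of the wedge product of $1$-forms I bring them adjacent at the cost of a sign and apply the preceding corollary to this pair $\omega_{i,J}^{\alpha}\wedge\omega_{i',J}^{\beta}$. That corollary expresses the pair as a signed sum of normalized pairs $\omega_{rs}^{\alpha'}\wedge\omega_{tu}^{\beta'}$ with $s<u\leq J$. In each term of the resulting sum no second index exceeds $J$, and the multiplicity of $J$ among the second indices of the ambient monomial has strictly dropped by at least one. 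Hence $W$ strictly decreases, and since weights belong to a well-founded set the elimination terminates, yielding a sum of normalized monomials whose second indices are pairwise distinct.

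Once the second indices are pairwise distinct, a final reordering by anticommutativity of the wedge product of $1$-forms rearranges the factors so that $j_1<j_2<\cdots<j_k$, at most altering a global sign. The resulting spanning family is the one stated in the corollary. The only delicate step is the termination of the elimination stage: it relies on the bound $u\leq\max(i,j,k,l)$ supplied by the preceding corollary, which guarantees that no second index larger than the common value $J$ is introduced. Without this bound the lexicographic weight could increase, and the recursion would collapse.
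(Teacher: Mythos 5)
Your proposal is correct and follows essentially the same route the paper intends: the corollary is a direct consequence of the preceding corollary, applied repeatedly to any pair of factors sharing a second index, with the bound $s<u\leq\max(i,j,k,l)$ ensuring the rewriting terminates; you have merely made the implicit induction explicit via the lexicographic weight on the multiset of second indices, together with the antisymmetry $\omega_{ij}^g=\omega_{ji}^{g^{-1}}$ for normalization and a final sign-reordering. This matches the paper's (unwritten) argument, and your termination bookkeeping is sound, including the degenerate case $k>n$, where the induction forces such monomials into the span (indeed to reduce to lower-length contributions or zero).
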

\begin{definition}
Define $A_n(R)$ as the quotient of the $R$-exterior algebra generated by the elements $\tilde{\omega}_{ij}^g$ and $\tilde{\omega}_{kk}^p$ for $i,j,k\in[1,n]$, with $i\neq j$, $g\in G$ and $p\in Z\setminus\{p_\infty\}$, by the ideal corresponding to relations analogue to those of proposition \ref{prop rel}.
\end{definition}

\begin{proposition}\label{prop Psi}
\begin{itemize}
\item[1)] We have a surjective morphism of graded $R$-algebra $\Psi_R: A_n(R) \to \Omega_D^*(X_n)_R$ given by $\tilde{\omega}_{ij}^g \mapsto \omega_{ij}^g$ and $\tilde{\omega}_{kk}^p\mapsto \omega_{kk}^p$, for $i,j,k\in[1,n], g\in G$ and $p\in Z\setminus\{p_\infty\}$.
\item[2)] The analogue of corollary \ref{cor base} holds for $A_n(R)$.
\end{itemize}
\end{proposition}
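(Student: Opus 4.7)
The plan is to construct $\Psi_R$ via the universal property of the exterior algebra and then show that it descends to $A_n(R)$. Let $E$ denote the $R$-exterior algebra on the abstract generators $\tilde{\omega}_{ij}^g$ and $\tilde{\omega}_{kk}^p$. Since $\Omega^*(X_n)_\C$ is a graded-commutative $\C$-algebra and each $\omega_{ij}^g$, $\omega_{kk}^p$ lies in degree one, the assignment on generators extends uniquely to a graded $R$-algebra morphism $\widetilde{\Psi}_R : E \to \Omega^*(X_n)_\C$, whose image is by definition $\Omega_D^*(X_n)_R$; this immediately gives surjectivity. To obtain $\Psi_R$ one then needs $\widetilde{\Psi}_R$ to annihilate the defining ideal $I \subset E$ of $A_n(R) = E/I$, but this is exactly the content of Proposition \ref{prop rel}: each of the relations (\ref{rel1})--(\ref{rel4}) holds in $\Omega_D^*(X_n)_R$ after substituting $\tilde{\omega}_a$ by $\omega_a$. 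So $\widetilde{\Psi}_R$ factors through $A_n(R)$, yielding part $(1)$.

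For part $(2)$, the key observation is that the reduction argument underlying Corollary \ref{cor base} is entirely formal: it proceeds by rewriting products of $1$-forms using only the identities (\ref{rel1})--(\ref{rel4}) together with the graded-commutativity of the wedge product. More precisely, the unnumbered corollary immediately preceding \ref{cor base} shows how to replace any binary product $\omega^\alpha_{ij}\wedge \omega^\beta_{kl}$ by a signed sum of products $\omega^{\alpha'}_{rs}\wedge \omega^{\beta'}_{tu}$ with $r\leq s$, $t\leq u$ and $s<u\leq \max(i,j,k,l)$, after which a straightforward induction on the multiset of "larger indices" $j_1,\dots,j_k$ of a general monomial $\omega^{\alpha_1}_{i_1j_1}\wedge\cdots\wedge\omega^{\alpha_k}_{i_kj_k}$ brings it to the canonical form with $j_1<\cdots<j_k$. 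I would transport this same sequence of substitutions, step by step, into $A_n(R)$, where the analogous relations hold by construction. This produces the claimed spanning family of monomials in $A_n(R)$.

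The only subtlety I anticipate is checking that every identity invoked in the proof of Corollary \ref{cor base} is truly a consequence of the defining relations of $A_n(R)$ and does not covertly rely on coincidences among the specific $1$-forms $\omega_a$ on $X_n$ beyond those relations. Since (\ref{rel1})--(\ref{rel4}) were derived in Proposition \ref{prop rel} using only pullbacks under the $G^n\rtimes\mathfrak{S}_n$-action and the antisymmetry of the wedge product, and the reduction algorithm references no other identities, this verification is mechanical rather than conceptual. Taken together, the two points $(1)$ and $(2)$ then follow.
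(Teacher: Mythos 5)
Your argument is correct and matches what the paper intends: the paper states this proposition without proof, treating it as immediate from Proposition \ref{prop rel} (the relations hold among the actual forms, so the map from the exterior algebra kills the defining ideal and descends, with surjectivity clear since the generators of $\Omega_D^*(X_n)_R$ are hit) and from the fact that the spanning argument of Corollary \ref{cor base} uses only those relations and graded commutativity, hence transfers verbatim to $A_n(R)$. Your write-up simply makes this implicit reasoning explicit, so there is nothing to correct.
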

For $(G,Z)=(\{1\}, \{\infty \})$, or $(G,Z)=(\langle \zeta z \rangle, \{0,\infty\})$ the space $X_n$ is the complement in $\C^n$ of a central hyperplane arrangement (see subsection \ref{S22}) and its cohomology ring with integral coefficients is isomorphic to $\Omega_D^*(X_n)_\Z$ (\cite{Arn}, \cite{Bries} lemma 5) and a presentation (definition by generators and relations) is known (\cite{Arn}, \cite{OS}). The algebra $A_n(\Z)$ is the algebra $A_n$ of \cite{Arn}. As we will see later, for all $X_n$, $A_n(\Z)$ is isomorphic to $\Omega_D^*(X_n)_\Z$. It follows easly that for $(G,Z)$ as in the beginning of the paragraph, the relations defining $A_n(\Z)$ are alternatives (or equal) to those in \cite{OS} for the corresponding central hyperplane arrangement.
\subsection{Periods of $\Omega_D^1(C_n^G(\PP^1\setminus Z))_R$ and $H^1(C_n^G(\PP^1\setminus Z),R)$}\label{S33}
We recall that for $P_n\in C_n^G(\PP^1\setminus Z)$, we have defined loops $x_{ij}^\alpha$ based at $P_n$, for $i\leq j \in[1,n]$ with $\alpha \in G$ if $i\neq j$ and $\alpha\in Z$ if $i=j$.
\begin{proposition}\label{prop int}
For $\alpha,\beta \in (G\cup Z \setminus \{p_\infty\})$ and $i\leq j,k\leq l \in [1,n]$, such that $x_{ij}^\alpha$ and $\omega_{kl}^\beta$ are well-defined, we have:
$$ \int_{x_{ij}^\alpha} \omega_{kl}^\beta= \delta_{ik} \delta_{jl} \delta_{\alpha \beta},$$
where $\delta_{ab}=1$ if $a=b$ and $\delta_{a,b}=0$ otherwise.
\end{proposition}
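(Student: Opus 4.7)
The plan is to reduce each period to a one-variable contour integral and then read off the answer from Cauchy's formula (equivalently, the winding number). First I would assume without loss of generality that $\infty\in Z$ and $h_Z=\mathrm{id}$, since the biholomorphism induced by $h_Z^{\times n}$ maps the loops and forms of Subsections \ref{S23} and \ref{S31} to the analogous objects on $C_n^{h_ZGh_Z^{-1}}(\PP^1\setminus h_Z(Z))$. Under this reduction, each $\omega_{kl}^{\beta}$ is of the form $\frac{1}{2i\pi}d\log Q_{kl}^{\beta}$ where $Q_{kl}^{\beta}$ is a polynomial in the two variables $z_k,z_l$ (or just $z_k$, if $k=l$) of degree one in each.

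The key observation is that $x_{ij}^\alpha$ varies only in its $i$-th coordinate, tracing the loop $\gamma_i(q)$ with $q=\alpha\cdot p_j$ if $i<j$ and $q=\alpha$ if $i=j$, while the remaining coordinates are frozen at the base point. Hence if $i\notin\{k,l\}$ the pullback of $\omega_{kl}^{\beta}$ vanishes identically and the integral is $0$, which already covers every case violating the required $k=i$, $l=j$ incidence (recall the hypothesis $k\le l$). In all remaining cases the pullback to $[0,1]$ reduces to $\frac{1}{2i\pi}d\log R(z_i)$ where $R$ is a nonzero polynomial of degree $\le 1$ in $z_i$: for the mixed form $\omega_{il}^{\beta}$ restricted to $z_l=p_l$, the unique zero (if any) sits at $\beta\cdot p_l$; for $\omega_{ki}^\beta$ with $k<i$ restricted to $z_k=p_k$, it sits at $\beta^{-1}\cdot p_k$; for $\omega_{ii}^\beta$ it sits at $\beta\in Z$. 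The degenerate possibility that $R$ is constant would require $\beta\cdot p_l=\infty$ (resp.\ $\beta^{-1}\cdot p_k=\infty$), which is excluded because $\infty\in Z$, $Z$ is $G$-stable, and the $p_m$ lie in $\PP^1\setminus Z$.

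I would then invoke the standard fact $\frac{1}{2i\pi}\oint_{\gamma_i(q)}d\log(z-z_0)=\mathrm{wind}_{\gamma_i(q)}(z_0)$, which equals $1$ if $z_0=q$ and $0$ otherwise, using that by construction (Subsection \ref{S23}) the Jordan loop $\gamma_i(q)$ bounds a closed disc $D(q)$ meeting $Y_i=Z\cup\bigcup_{m\ne i}G\cdot p_m$ only at $q$. The final bookkeeping step matches the pole of $R$ against the encircled point $q$. The only potential matches are:
\begin{itemize}
\item $i<j$, $\alpha=g$, $(k,l)=(i,j)$, $\beta=g$: the pole $\beta\cdot p_j$ coincides with $q=g\cdot p_j$ iff $\beta=g$, by freeness of the $G$-action on the orbit of $p_j$;
\item $i=j$, $\alpha=p$, $k=l=i$, $\beta=p$: the pole $\beta$ coincides with $q=p$ iff $\beta=p$.
\end{itemize}
Every other configuration puts the pole in some orbit $G\cdot p_m$ while $q$ lies in $Z$ (or vice versa), and these are disjoint since the base point sits in $C_n^G(\PP^1\setminus Z)$; hence the pole lies outside $D(q)$ and the integral is $0$.

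The only genuine obstacle is organizing the case analysis without losing track of the various subscripts and of the asymmetric convention $k\le l$; once the correct pole of each restricted form is identified and the disjointness between $G$-orbits of regular points and the set $Z$ is invoked, the computation is a residue-theoretic bookkeeping exercise and the stated Kronecker-delta formula drops out.
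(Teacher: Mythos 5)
Your proposal is correct and follows essentially the same route as the paper's proof: reduce via $h_Z^{\times n}$ to the case $\infty\in Z$, observe that only the $i$-th coordinate of $x_{ij}^\alpha$ moves, restrict $\omega_{kl}^\beta$ to a one-variable logarithmic form whose unique zero is $\beta\cdot p_l$, $\beta^{-1}\cdot p_k$ or $\beta$, and conclude by the residue/winding-number count together with freeness of the $G$-action and the disjointness of $Z$ and the orbits $G\cdot p_m$. The only nitpicks are that the claim that the $i\notin\{k,l\}$ vanishing "already covers every case violating $k=i$, $l=j$" is an overstatement (e.g. $k=i$, $l\neq j$ is not of that type), and that your final disjointness clause should also cover the configurations where the pole and $q$ lie in two distinct orbits $G\cdot p_k$ (or $G\cdot p_l$) and $G\cdot p_j$ — but both situations are in fact disposed of by the same orbit-configuration argument you invoke, exactly as in the paper.
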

\begin{proof}
The formula follows from the case $h_Z=\mathrm{id}$ using the formula $\int_{f(\gamma)} \omega= \int_{\gamma} f^*\omega$, since $h_Z^{-1}(x_{ij}^\alpha)$ has same homology of $x_{ij}^{\alpha^h}$, where $\alpha^h$ is $h\alpha h ^{-1}$ or $h(\alpha)$ and $x_{ij}^{\alpha^h}$ is the analogue in the corresponding space of $x_{ij}^\alpha$. Hence, we assume that $h_Z=\mathrm{id}$ ($\infty \in Z$). We only prove the case $k<l$ ($\beta \in G$). The easier case $k=l$ can be treated similarly. Assume that $k<l$. The $s$-th component $(x_{ij}^\alpha)_s$ of $x_{ij}^\alpha$ is constant for $s\neq i$ and $(x_{ij}^\alpha)_i=\gamma_i(p_\alpha)$ (see subsection \ref{S23}). Hence:
$$ \int_{x_{ij}^\alpha} \omega_{kl}^\beta=\frac{\delta_{ik}}{2i\pi}\int_{t\in [0,1]}d\log Q_{\beta}(\gamma_i(p_\alpha)(t),p_l)+ \frac{\delta_{il}}{2i\pi}\int_{t\in [0,1]}d\log Q_{\beta}(p_k,\gamma_i(p_\alpha)(t)),$$
where $p_\alpha\in (\Pec \setminus \{\infty\})$ if $i=j$ or $p_\alpha=\alpha \cdot p_j$ if $i< j$, and $Q_\beta$ is equal to $P_{\tilde{\beta}}$ for a given lift $\tilde{\beta}\in \PSL_2(\C)$ of $\beta$. The polynomials $Q_\beta(x,p_l)$ and $Q_\beta(p_k,y)$ are degree $1$ polynomials with respective zeros $x=\beta\cdot p_l$ and $y=\beta^{-1}\cdot p_k$. The equations therefore reduces to:
$$ \int_{x_{ij}^\alpha} \omega_{kl}^\beta=\frac{\delta_{ik}}{2i\pi}\int_{t\in [0,1]}\frac{\gamma_i(p_\alpha)'(t)}{\gamma_i(p_\alpha)(t)-\beta\cdot p_l} + \frac{\delta_{il}}{2i\pi}\int_{t\in [0,1]}\frac{\gamma_i(p_\alpha)'(t)}{\gamma_i(p_\alpha)(t)-\beta^{-1}\cdot p_k},$$
The loops $\gamma_{i}(p_\alpha)$ are oriented clockwise. One deduce from the definition of $\gamma_i(p_\alpha)$ and the residue theorem that: $$ \int_{x_{ij}^\alpha} \omega_{kl}^\beta=\delta_{ik}\delta_{p_\alpha (\beta\cdot p_l)} +\delta_{il}\delta_{p_\alpha (\beta^{-1}\cdot p_k)}. $$
The proposition follows for $k<l$, since $\delta_{p_\alpha (\beta\cdot p_l)}=\delta_{jl}\delta_{\alpha \beta}$ and $\delta_{p_\alpha (\beta^{-1}\cdot p_k)}=0$ if $i=l$ ($i\leq j$ by hypothesis, hence $k<j$).
\end{proof}
\begin{corollary}\label{cor gen H1}
The first singular homology group of $C_n^G(\PP^1\setminus Z)$ with coefficient in $\Z$, is freely generated by the cohomology classes of the loops, $x_{ij}^g$ and $x_{kk}^p$, for $i,j,k\in[1,n], g\in G, p\in Z\setminus \{p_\infty\}$ with $i<j$ .
\end{corollary}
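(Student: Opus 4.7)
The plan is to combine Proposition \ref{gen loop} (which handles generation) with Proposition \ref{prop int} (which gives a Kronecker--delta pairing against the $1$-forms $\omega_{kl}^\beta$) via the integration pairing between $H_1$ and $\Omega_D^1(X_n)_\C$. The intuition is that the forms $\omega_{kl}^\beta$ play the role of a dual basis to the loops $x_{ij}^\alpha$, and this duality forces $H_1$ to be free on the given classes.

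More precisely, set $X_n:=C_n^G(\PP^1\setminus Z)$ and let $N$ be the number of index data $(i,j,\alpha)$ appearing in the statement. First, Proposition \ref{gen loop} says that the loops $x_{ij}^g$ and $x_{kk}^p$ generate $\pi_1(X_n,P_n)$; applying the Hurewicz map and using that $X_n$ is path-connected, their homology classes generate $H_1(X_n,\Z)$. Consequently the obvious map
\begin{equation*}
\Psi:\Z^N\longrightarrow H_1(X_n,\Z),
\end{equation*}
sending the standard basis vectors to the classes $[x_{ij}^g]$ and $[x_{kk}^p]$, is surjective.

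For injectivity, I would go via integration. Each $\omega_{kl}^\beta$ of Definition \ref{def forms} is a closed smooth $1$-form on $X_n$, so integration over smooth representatives defines a group homomorphism $\int\omega_{kl}^\beta:H_1(X_n,\Z)\to\C$; assembling these over all $N$ indices yields a homomorphism $\Phi:H_1(X_n,\Z)\to\C^N$. Proposition \ref{prop int} computes the matrix of $\Phi\circ\Psi$ in the standard bases to be the identity matrix of size $N$. In particular $\Phi\circ\Psi=\mathrm{id}_{\Z^N}$, so $\Phi$ even takes values in $\Z^N\subset\C^N$ (by surjectivity of $\Psi$) and $\Psi$ admits a left inverse, hence is injective. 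Combined with surjectivity this gives $\Psi:\Z^N\xrightarrow{\sim}H_1(X_n,\Z)$, which is precisely the statement of the corollary.

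There is no real obstacle: the two previous propositions were designed exactly to produce, respectively, a spanning family and a dual family under the integration pairing, and the corollary is the clean linear-algebra consequence. The only point requiring a small verification is that the integrals $\int_\gamma\omega_{kl}^\beta$ of an integral class $\gamma$ are integers (so that $\Phi$ lands in $\Z^N$), which is automatic here because any such $\gamma$ is by surjectivity a $\Z$-combination of the $x_{ij}^\alpha$, on which Proposition \ref{prop int} returns values in $\{0,1\}$.
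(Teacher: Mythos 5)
Your proposal is correct and is essentially the paper's own argument: the paper's proof simply combines Proposition \ref{gen loop} (generation, via Hurewicz) with Proposition \ref{prop int} (the Kronecker-delta integration pairing), which is exactly the duality argument you spell out. No further comment is needed.
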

\begin{proof}
We combine the previous proposition with proposition \ref{gen loop}.
\end{proof}
\section{The cohomology ring of the orbit configuration spaces and their homology}\label{S4}
We keep the notation of the previous section $X_n=\Cng$. In the first subsection, we assume that $(G,Z)\neq (\{1\},\emptyset)$. We show using results from the previous section, that for $R \subset \C$ an unital ring, we have isomorphism $H^*(X_n,R)\simeq H^*(X_{n-1},R) \otimes W_n^*$ and $H_*(X_n,R)\simeq H_*(X_n,R)\simeq H^*(X_{n-1},R) \otimes W_n^*$, where $W_n^*\subset \Omega^{1-}_D(X_n)_R$ is a space of differential forms identified to $H^*(F_n,R)$ via integration and $H^*(-,R),H_*(-,R)$ correspond to singular cohomology and homology with coefficients in $R$ respectively. It follows by induction that: the groups $H^k(X_n,R),H_k(X_n,R)$ are free $R$-modules, $H^*(X_n,R)$ admits a basis obtained out of classes corresponding to differential forms, and that the Poincaré series of $X_n$ factors into a product of linear terms. We then show that: integration induces a isomorphism of graded $R$-algebras $\Phi_R:\Omega_D^*(X_n)_R \to H^*(X_n,R)$, that $\Psi_R:A_n(R) \to \Omega_D^*(X_n)_R$ of the previous section is an isomorphism, and that the products (defined in the previous section) spanning $A_n(R)$ and $\Omega_D^*(X_n)_R$ form in fact a basis of the corresponding modules. In particular, we get a description by generators and relations for both rings $H^*(X_n,R)$ and $\Omega_D^*(X_n)_R$. At the end of the section, we prove that $X_n$ is formal is the sense of rational homotopy theory. In the second subsection, we consider the case $(G,Z)=(\{1\},\emptyset)$. We show that the cohomology ring of $X_n$ with coefficient in $R$ correspond to a subring of differential forms (for $R$ a principal ideal domain containing $\frac{1}{2}$, if $n\geq 3$) and that the space $X_n$ is formal. We also give the Poincaré series of $X_n$.\\\\
As mentioned in the introduction in this section: $R\subset \C$ is a unital ring, $H^*(X,R)$ and $H_*(X,R)$ denote the singular cohomology and homology of $X$ with coefficient in $R$ and $X_n:=C_n^G(\PP^1\setminus Z)$.\\\\
For $A^*=\oplus_{k\in \N } A_k, B^*=\oplus_{k\in \N } B_k$ graded $R$-modules and $f^*: A^* \to B^*$ a map of graded $R$-modules, we denote by $A^{ l^-}$ the graded $R$-module $\oplus_{0\leq k \leq l } A_k$ and by $f^{ l^-}$ (resp. $f^l$) the map of graded modules $A^{l^-} \overset{f^{*}}{\to} B^{l^-}$ (resp. $A^l\overset{f^*}{\to} B^l$).
\subsection{The case $(G,Z)\neq (\{1\},\emptyset)$}\label{S41}
In this subsection we assume that $(G,Z)\neq (\{1\},\emptyset)$. Recall that we have an integration isomorphism of graded $\C$-algebras (Cf. Subsection \ref{S DR}), $\int: H_{DR}^*(X_n,\C) {\to} \Hom_\Z(H_*(X_n,\Z),\C)$. Since $\Omega_D^*(X_n)_R$ consists of closed forms, we have a morphism of graded $R$-algebras $T_R: \Omega_D^*(X_n)_R \to H_{DR}^*(X_n,\C), \omega \to [\omega]$. Set:
$$ \phi_*:=\int \circ T_R: \Omega_D^*(X_n)_R \to \Hom_\Z(H_*(X_n,\Z),\C),$$
given by $\phi_*(\omega)([\sigma])= \int_{\sigma^s} \omega$, where $\sigma_s$ is a smooth singular chain representing the homology class $[\sigma]$.
\begin{proposition}\label{prop phi1}
\begin{itemize}
\item[1)] For $\omega \in \Omega_{D}^{1-}(X_n)_R$, the image of $\phi_{1-}(\omega)$ lies in $R$.
\item[2)] We have a well-defined isomorphism $\phi_{1-}^R$ ("restriction" of $\phi_{1-}$) of $R$-modules:
\begin{align*}
\Omega_{D}^{1-}(X_n)_R &\to \Hom_\Z(H_{1-}(X_n,\Z),R)\\
\omega &\mapsto ([\sigma] \mapsto \int_{\sigma^s} \omega),
\end{align*}
\end{itemize}
\end{proposition}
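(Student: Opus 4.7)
The plan is to treat the degree $0$ and degree $1$ parts of $\Omega_D^{1-}(X_n)_R$ separately, using the pairing computation of Proposition~\ref{prop int} against the homology basis of Corollary~\ref{cor gen H1} to control integration.

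In degree $0$, since $X_n$ is path connected (as recorded at the end of Subsection~\ref{S21}), we have $\Omega_D^0(X_n)_R = R$ and $H_0(X_n,\Z)\simeq \Z$, so $\Hom_\Z(H_0(X_n,\Z),R)\simeq R$. The map $\phi_0$ sends the constant $1$ to the homomorphism taking a point class to $1$, which immediately gives both assertions in this degree.

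The core of the argument lies in degree $1$. I will use that $\Omega_D^1(X_n)_R$ is spanned as an $R$-module by the distinguished generators $\omega_{ij}^g$ (for $i<j$, $g\in G$) and $\omega_{kk}^p$ (for $p\in Z\setminus\{p_\infty\}$), and that by Corollary~\ref{cor gen H1} the classes $[x_{ij}^g]$, $[x_{kk}^p]$ form a free $\Z$-basis of $H_1(X_n,\Z)$ indexed by the same set. Proposition~\ref{prop int} then tells us that $\phi_1$ carries each generator $\omega_{kl}^\beta$ to the linear functional dual to the corresponding homology basis element.

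Both conclusions will follow from this duality. For $(1)$, any $\omega$ is an $R$-combination of the generators and any $1$-cycle class is a $\Z$-combination of the $[x_{ij}^\alpha]$, so $\Z$-bilinearity of integration together with the Kronecker delta identity forces $\phi_1(\omega)([\sigma])\in R$. For $(2)$, the images of the distinguished generators under $\phi_1$ are $R$-linearly independent in $\Hom_\Z(H_1(X_n,\Z),R)$, since they form a dual family to a $\Z$-basis; hence the generators themselves must be $R$-linearly independent in $\Omega_D^1(X_n)_R$, so they constitute a free $R$-basis. Then $\phi_1^R$ sends this basis bijectively onto the dual basis of $\Hom_\Z(H_1(X_n,\Z),R)$, and is therefore an isomorphism.

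The delicate point worth flagging is the freeness of the distinguished generators in $\Omega_D^1(X_n)_R$: the relations of Subsection~\ref{S32} were only established in degree~$2$ and do not a priori preclude linear dependencies in degree~$1$. The above argument extracts freeness directly from the pairing with $H_1$ rather than through any algebraic manipulation, so no additional computation is needed beyond Proposition~\ref{prop int} and Corollary~\ref{cor gen H1}.
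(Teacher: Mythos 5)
Your proof is correct and follows essentially the same route as the paper, whose own argument simply invokes Proposition~\ref{prop int}, Corollary~\ref{cor gen H1} and path-connectedness; you have merely spelled out the degree-$0$ case and the duality argument in degree~$1$. The point you flag about freeness of the $1$-forms is handled exactly as you do it, by reading independence off the integration pairing, so no gap remains.
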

\begin{proof}
The proposition follows from proposition \ref{prop int}, corollary \ref{cor gen H1} and the fact that $X_n$ is path connected.
\end{proof}
We recall that, for $n\geq 1$, we have a fiber bundle $F_{n} \to X_{n} \overset{\pi_n}{\to} X_{n-1}$, with fiber $F_n$ homeomorphic to $\PP^1$ minus $\vert G \vert (n-1)+\vert Z\vert$ points. In particular,
$$H_*(F_n,R)\simeq H^*(F_n,R) \simeq (R)_0\oplus (R^{\alpha_n})_1, $$
where $(-)_i$ stands for the degree $i$ component and $\alpha_n=\vert G \vert (n-1)+\vert Z \vert-1$.
The fiber over a point $P=(p_1,\dots,p_{n-1}) \in X_{n-1}$ is equal to:
$$F_{n,P}=\{(p_1,\dots,p_{n-1},x) \in (\PP^1\setminus Z)^n \vert x\notin G\cdot p_i \: \text{for } i\in[1,n-1]\}.$$
For $X$, a topological space, one has a natural morphism of $R$-modules:
$$ N_{X,*}^R: H^*(X,R) \to \Hom_\Z(H_*(X,\Z),R),$$
The maps $N_{X_n,1-}^R$ and $N_{F_{n,p},*}^R$ are isomorphisms.
\begin{proposition}
Let $W_n^*$ be the $R$-submodule of $\Omega_D^{1-}(X_n)_R$ spanned by $1$ and the $1$-forms $\omega_{in}^g$ and $\omega_{nn}^p$ for $i<n,g\in G$ and $p\in Z\setminus \{p_\infty\}$. For $P\in X_{n-1}$, the composite of the following maps: $$W_n^* \overset{\phi_{1-}^R}{\longrightarrow}\Hom_\Z(H_{1-}(X_n,\Z),R) \overset{(N_{X_n,1-}^R)^{-1}}{\longrightarrow}H^*(X_{n},R) \overset{i_n^*}{\longrightarrow} H^*(F_{n,P},R),$$
where $i_n^*$ is the map induced by the inclusion $i_n$ of the fiber $F_{n,P}\to X_n$ , is an isomorphism of graded $R$-modules.
\end{proposition}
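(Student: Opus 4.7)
The plan is to post-compose the stated composite with the isomorphism $N_{F_{n,P},*}^R$ from Subsection \ref{S DR}, reducing the statement to showing that the integration pairing
\[
W_n^*\longrightarrow\Hom_\Z(H_*(F_{n,P},\Z),R),\qquad \omega\longmapsto\bigl([\gamma]\mapsto\int_{\gamma^s}i_n^*\omega\bigr)
\]
is an isomorphism of graded $R$-modules. By Remark \ref{rmk infty} I may assume $p_\infty=\infty\in Z$ and $h_Z=\mathrm{id}$, and by path-connectedness of $X_{n-1}$ I may work with the specific base $P=(p_1,\dots,p_{n-1})$ coming from the distinguished base point $P_n$, so that $P_n\in F_{n,P}$.

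First I would match ranks on both sides. The submodule $W_n^*$ is spanned by $1$ in degree $0$ together with the $\alpha_n=(n-1)|G|+|Z|-1$ elements $\omega_{in}^g$ ($i<n$, $g\in G$) and $\omega_{nn}^p$ ($p\in Z\setminus\{\infty\}$) in degree $1$. On the fiber side, $F_{n,P}$ is $\PP^1$ deprived of the $\alpha_n+1$ punctures $Y_n=Z\cup\bigcup_{i<n}G\cdot p_i$, so $H_*(F_{n,P},\Z)$ is free of total rank $1+\alpha_n$; in degree $1$ a free basis is provided by the meridians $\gamma_n(q)$ for $q\in Y_n\setminus\{\infty\}$ (the excluded meridian being forced trivial by the sphere relation).

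Next I would evaluate the pairing on these bases. Pulling back to the fiber (i.e.\ fixing $z_k=p_k$ for $k<n$), the form $\omega_{in}^g$ becomes a logarithmic derivative with a single simple pole at $z_n=g^{-1}\cdot p_i$, and $\omega_{nn}^p$ becomes $\frac{1}{2i\pi}d\log(z_n-p)$ with simple pole at $p$. The residue theorem combined with the anticlockwise orientation convention of Subsection \ref{S23} yields Kronecker deltas: $\omega_{in}^g$ pairs nontrivially only with the meridian around $g^{-1}\cdot p_i$, which by freeness of the $G$-action is exactly the meridian $\gamma_n(h\cdot p_j)$ for $j=i$ and $h=g^{-1}$; similarly $\omega_{nn}^p$ pairs nontrivially only with $\gamma_n(p)$. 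Hence the integration matrix is block-diagonal: a permutation block of size $(n-1)|G|$ encoding the involution $g\leftrightarrow g^{-1}$ on $G$, and an identity block of size $|Z|-1$.

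Both blocks are invertible over $\Z$, hence over $R$, so the integration pairing is an isomorphism of graded $R$-modules and the original composite $i_n^*\circ(N_{X_n,1-}^R)^{-1}\circ\phi_{1-}^R$ follows suit. The main obstacle is essentially bookkeeping: confirming that the chosen $\alpha_n$ meridians form a free basis of $H_1(F_{n,P},\Z)$ (standard for punctured spheres, and consistent with the long exact sequence argument already used for Proposition \ref{gen loop}) and that the orientation conventions of Subsection \ref{S23} give $+1$ residues in the computation above. Once these are pinned down, the computation is an immediate residue calculation.
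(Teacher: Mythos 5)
Your proposal is correct and follows essentially the same route as the paper: reduce via the natural isomorphism $N_{F_{n,P},*}^R$ to the integration pairing between the spanning forms of $W_n^*$ and the meridian loops generating $H_1(F_{n,P},\Z)$, and check that the resulting matrix of residues is invertible over $\Z$ (the paper gets an identity matrix by indexing the fiber loops as $x_{in}^\alpha$ and invoking Proposition \ref{prop int}, while your indexing by punctures yields the equivalent permutation block $g\leftrightarrow g^{-1}$ plus an identity block).
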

\begin{proof}
The fact that the composition is an isomorphism in degree $0$ is clear. Since $H^*(F_{n,p},R)$ is concentrated in degree $0$ and $1$ we still have to prove the assertion in degree $1$. We have the following commutative diagram:
\[\begin{tikzcd}
W_n^* \arrow{r}{\phi_{1 }^R} \arrow{rd}[swap]{r_1} & \Hom_\Z(H_{1 }(X_n,\Z),R) \arrow{d}{\Hom_\Z((i_n)_*,R) } & \arrow{l}[swap]{ N_{X_n,1 }^R } H^{1 }(X_n,R) \arrow{d}{ i_n^*} \\
& \Hom_\Z(H_1(F_{n,P},\Z),R) & \arrow{l}{N_{F_{n,P},1 }^R} H^1(F_{n,P},R)
\end{tikzcd},\]
where $r_1$ is $\Hom_\Z((i_n)_*,R) \circ \phi_{1 }^R$. Since $ N_{X_n,1 }^R $ and $ N_{F_{n,P},1 }^R $ are isomorphisms, we only need to prove that $r_1$ is an isomorphism.
For any point $Q\in F_{n,p}$, the group $H_1(F_{n,P},R)$ is generated by the homology classes of the loops $x_{in}^\alpha$ and $x_{nn}^\beta$, associated to the base point $Q$, for $i\in[1,n-1],\alpha \in G$ and $\beta \in Z \setminus \{p_\infty\}$. Moreover, $ \int_{x_{in}^\alpha} \omega_{kn}^\beta= \delta_{ik} \delta_{\alpha \beta},$ where $\delta_{ab}=1$ if $a=b$ and $\delta_{a,b}=0$ (proposition \ref{prop int}). This proves that $r_1$ given by $\omega_{kn}^\beta\mapsto ([\gamma]\mapsto \int_{(i_n)_*([\gamma])} \omega_{kn}^\beta)$ is an isomorphism.
\end{proof}
We deduce from the proposition that $W_n^*$ is isomorphic to $H^*(F_{n,p},R)$ and that the restriction of $$\theta_{n,*}:= (N_{X_n,1-}^R)^{-1} \circ \phi_{1- }^R :\Omega_D^{1-}(X_n)_R \to H^*(X_n,R)$$
to $W_n^*$ corresponds to a cohomology extension of the fiber for the fiber bundle $F_n\to X_n \overset{\pi_n}{\to} X_{n-1}$ (in the sense of \cite{Span}, p. 256). Since $H_*(F_n,R)$ is a finitely generated free $R$-module, we can apply the Leray-Hirsch theorem (version in \cite{Span}, p. $259$, theorem $9$) stating in the case of $\pi_n$ that:
\begin{proposition}
We have isomorphisms of $R$-graded modules:
\begin{align*}
H^*(X_{n-1},R)\otimes_R W_{n}^* &\to H^*(X_{n},R)\\
a\otimes b &\mapsto \pi_n^*(a)\smile \theta_{n,*} (b),
\end{align*}
where $\pi_n^*$ is the map induced by $\pi_n:X_n\to X_{n-1}$ and $\smile$ is the cup product; and
\begin{align*}
H_*(X_{n},R)&\to H_*(X_{n-1},R)\otimes_R W_{n}^* \\
a &\mapsto \sum_\alpha (\pi_n)_*(\theta_{n,*}(\omega_\alpha) \frown a)\otimes \omega_\alpha),
\end{align*}
where $(\pi_n)_*$ is induced by $\pi_n$, $\frown$ is the cap product and the sum runs over the elements $$\omega_\alpha\in \{1\}\cup\{\omega_{kn}^c \vert \text{$k\in[1,n],c\in G$ if $k<n$ and $p\in Z \setminus \{p_\infty\}$ if $k=n$}\}.$$
\end{proposition}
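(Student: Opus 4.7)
The plan is to apply the Leray--Hirsch theorem directly to the locally trivial fibration $F_n \to X_n \overset{\pi_n}{\to} X_{n-1}$ introduced in Subsection~\ref{S21}. The previous proposition has already done the decisive work: it identifies $W_n^\ast$ with $H^\ast(F_{n,P},R)$ via the composition $i_n^\ast \circ \theta_{n,\ast}\vert_{W_n^\ast}$, which is precisely the content of being a cohomology extension of the fiber in the sense of Spanier. Combined with the fact that $H_\ast(F_n,R) \simeq R \oplus R^{\alpha_n}$ is a free $R$-module of finite rank (since $F_n$ is $\mathbb{P}^1$ minus finitely many points), all hypotheses of the Leray--Hirsch theorem are met.

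First, I would note that $X_{n-1}$ is path connected (shown in Subsection~\ref{S21}), so the local coefficient systems $\{H^\ast(\pi_n^{-1}(P),R)\}_P$ and $\{H_\ast(\pi_n^{-1}(P),R)\}_P$ may be consistently identified via parallel transport, and the cohomology extension $\theta_{n,\ast}\vert_{W_n^\ast}$ makes sense globally, not merely over a chosen basepoint. Then I would invoke the cohomological version of Leray--Hirsch (Spanier, p.~259, Theorem~9) which gives directly that the map
\begin{align*}
H^\ast(X_{n-1},R)\otimes_R W_n^\ast &\longrightarrow H^\ast(X_n,R)\\
a\otimes b &\longmapsto \pi_n^\ast(a)\smile \theta_{n,\ast}(b)
\end{align*}
is an isomorphism of graded $R$-modules. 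For this to apply one needs the generators $\theta_{n,\ast}(b)$ to restrict on each fiber to a basis of $H^\ast(F_n,R)$, which is exactly what the preceding proposition established.

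For the homological statement, I would use the homological version of Leray--Hirsch (stated in the same reference), where the inverse map is expressed in terms of the cap product with the chosen cohomology extension. Concretely, taking the basis $\{\omega_\alpha\}$ of $W_n^\ast$ described in the statement (i.e.\ $1$ together with the $\omega_{kn}^c$ in degree $1$), one obtains the map
\begin{align*}
H_\ast(X_n,R)&\longrightarrow H_\ast(X_{n-1},R)\otimes_R W_n^\ast\\
a &\longmapsto \sum_\alpha (\pi_n)_\ast\bigl(\theta_{n,\ast}(\omega_\alpha)\frown a\bigr)\otimes \omega_\alpha.
\end{align*}
Freeness and finite rank of $H_\ast(F_n,R)$ ensure this is a well-defined isomorphism inverse to the natural cap-product pairing.

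The main obstacle, such as it is, is purely bookkeeping: checking that the cohomology extension identified fiberwise in the previous proposition indeed defines a global extension over all of $X_{n-1}$, so that Leray--Hirsch applies without monodromy issues. This follows from path-connectedness of $X_{n-1}$ together with the explicit realization of the classes $\theta_{n,\ast}(\omega_{kn}^c)$ by globally defined closed differential forms on $X_n$: their restrictions to every fiber are given by the same formulas and thus form a consistent basis of fiberwise cohomology, so no monodromy obstruction occurs.
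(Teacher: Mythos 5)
Your proposal is correct and takes essentially the same route as the paper: the paper likewise observes that the preceding proposition exhibits $\theta_{n,*}\vert_{W_n^*}$ as a cohomology extension of the fiber in Spanier's sense, notes that $H_*(F_n,R)$ is a finitely generated free $R$-module, and then quotes the Leray--Hirsch theorem (Spanier, p.~259, Theorem~9) to obtain both the cohomological and the cap-product homological isomorphisms. Your extra remarks on path-connectedness and absence of monodromy issues are consistent with the paper's setup (the previous proposition is already stated for every $P\in X_{n-1}$) and do not change the argument.
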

Using induction, we deduce the following corollaries:
\begin{corollary}
For $k\geq 0$, the $R$-modules $H^k(X_n,R)$ and $H_k(X_n,R)$ are finitely generated free $R$-modules.
\end{corollary}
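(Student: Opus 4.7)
The plan is a straightforward induction on $n$, using the Leray-Hirsch splittings just established as the inductive engine.

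For the base case $n=1$, the space $X_1 = \PP^1 \setminus Z$ is the $2$-sphere minus the finite set $Z$, which is nonempty since $(G,Z)\neq (\{1\},\emptyset)$. Up to homotopy $X_1$ is either a point (if $|Z|=1$) or a wedge of $|Z|-1$ circles; in either case both $H^*(X_1,R)$ and $H_*(X_1,R)$ are free $R$-modules concentrated in degrees $0$ and $1$, with ranks $1$ and $|Z|-1$ respectively. This matches $W_1^*$ and initializes the induction.

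For the inductive step, assume that $H^*(X_{n-1},R)$ and $H_*(X_{n-1},R)$ are finitely generated free $R$-modules. The module $W_n^*$ is, by its very definition, a finitely generated free $R$-module: it is spanned by the $1+\alpha_n$ explicit elements $1$, $\omega_{in}^g$ (for $i<n$, $g\in G$) and $\omega_{nn}^p$ (for $p\in Z\setminus\{p_\infty\}$), and the previous proposition identifies it isomorphically (via $r_1$, hence via integration against the basis loops $x_{in}^\alpha,x_{nn}^\beta$ of $H_1(F_{n,P},\Z)$) with $H^*(F_{n,P},R)$, which is free of the correct rank. The Leray-Hirsch isomorphisms of the preceding proposition then give
\begin{align*}
H^*(X_n,R) &\simeq H^*(X_{n-1},R)\otimes_R W_n^*,\\
H_*(X_n,R) &\simeq H_*(X_{n-1},R)\otimes_R W_n^*,
\end{align*}
as $R$-modules. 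A tensor product over $R$ of two finitely generated free $R$-modules is again finitely generated and free, so the inductive hypothesis propagates to step $n$, completing the argument.

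No real obstacle is expected: the only point that needs a moment's care is checking that the base case actually falls inside the hypothesis $(G,Z)\neq (\{1\},\emptyset)$, i.e.\ that $Z$ is nonempty (which is guaranteed by Proposition \ref{prop bir} when $G$ is nontrivial, and is the assumption otherwise), so that $X_1$ is genuinely an open surface and not the whole sphere. Once that is noted, the inductive step is purely formal from the Leray-Hirsch splittings already proved, together with the elementary fact that freeness and finite generation are preserved by tensor products over $R$.
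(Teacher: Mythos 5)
Your proposal is correct and follows essentially the same route as the paper: the paper deduces this corollary precisely by induction on $n$ from the Leray-Hirsch splittings $H^*(X_n,R)\simeq H^*(X_{n-1},R)\otimes_R W_n^*$ and $H_*(X_n,R)\simeq H_*(X_{n-1},R)\otimes_R W_n^*$, with $W_n^*$ finitely generated free. Your explicit treatment of the base case and of the freeness of tensor products just spells out what the paper leaves implicit.
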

\begin{corollary}
The natural map $N_{X_n,*}^R: H^*(X_n,R)\to \Hom_\Z(H_*(X_n,\Z),R)$ is an isomorphism.
\end{corollary}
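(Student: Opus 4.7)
The plan is to apply the Universal Coefficient Theorem for singular cohomology. For any topological space $X$ and any unital ring $R$, there is a natural short exact sequence
\begin{equation*}
0 \to \mathrm{Ext}^1_\Z(H_{k-1}(X,\Z), R) \to H^k(X,R) \xrightarrow{N^R_{X,k}} \Hom_\Z(H_k(X,\Z), R) \to 0,
\end{equation*}
so the conclusion will follow as soon as we know that $H_{k-1}(X_n, \Z)$ is torsion-free for every $k \geq 0$, since then the $\mathrm{Ext}^1$ term vanishes and $N^R_{X_n, k}$ is forced to be an isomorphism.

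To obtain the required torsion-freeness, I would simply specialize the preceding corollary to $R = \Z$. That corollary was proved as an induction using the Leray--Hirsch isomorphism coming from the cohomology extension $W_n^* \hookrightarrow \Omega_D^{1-}(X_n)_R \to H^*(X_n, R)$, and every ingredient in that argument is already valid integrally: the periods computed in Proposition \ref{prop int} are integers, so $\phi_{1-}^\Z$ is well-defined; the fiber $F_{n,P}$ has free integral homology since it is a punctured $2$-sphere; and the isomorphism $W_n^* \to H^*(F_{n,P}, \Z)$ follows from the same duality pairing $\int_{x_{in}^\alpha} \omega_{kn}^\beta = \delta_{ik}\delta_{\alpha\beta}$. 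Thus the previous corollary applied with $R = \Z$ gives that $H_k(X_n, \Z)$ is a finitely generated free abelian group, the $\mathrm{Ext}$ term in the UCT vanishes for every $R \subset \C$, and $N^R_{X_n, *}$ is an isomorphism.

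There is essentially no obstacle: the heavy lifting was already performed in the previous corollary, and the universal coefficient theorem is a purely formal consequence once integral homology is known to be torsion-free. The only point requiring a moment's care is verifying that the induction leading to that corollary does indeed run over $R = \Z$ without modification, which is clear from the fact that all the relevant maps and pairings in Subsection \ref{S33} and in the proof of the Leray--Hirsch step were defined using forms with integer periods.
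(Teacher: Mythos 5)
Your proof is correct and is essentially the argument the paper intends: the corollary is stated as a consequence of the Leray--Hirsch induction, which (applied with $R=\Z$, a unital subring of $\C$) gives that $H_*(X_n,\Z)$ is finitely generated free in each degree, and then the universal coefficient exact sequence forces $N_{X_n,*}^R$ to be an isomorphism since the $\mathrm{Ext}^1$ terms vanish. Your extra check that the inductive Leray--Hirsch argument runs verbatim over $\Z$ (integer periods, free homology of the punctured-sphere fiber) is exactly the point that makes this specialization legitimate.
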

\begin{corollary}\label{cor base co}
The family of products:
$$\theta_{n,*}(\omega_{i_1j_1}^{\beta_1})\smile \cdots \smile \theta_{n,*}(\omega_{i_kj_k}^{\beta_k}), \quad 0\leq k \leq n , \quad 1\leq i_k\leq j_k \leq n , \quad j_1<j_2<\cdots <j_k $$
with $\beta_k\in G$ if $i_k\neq j_k$ and $\beta_k\in Z\setminus \{p_\infty\}$ if $i_k=j_k$ (for $k=0$ the product is equal to $1$), form a basis of the $R$-module $H^*(X_n,R)$.
\end{corollary}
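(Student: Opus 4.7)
The plan is to proceed by induction on $n$, using the Leray--Hirsch isomorphism of the preceding proposition together with the inductive description of a basis for $H^*(X_{n-1}, R)$.

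For the base case $n = 1$, the space $X_1 = \PP^1 \setminus Z$ has free cohomology of rank $1$ in degree $0$ (spanned by $1$) and of rank $|Z| - 1$ in degree $1$ (spanned by $\theta_{1,*}(\omega_{11}^p)$ for $p \in Z \setminus \{p_\infty\}$), matching the asserted family; equivalently this is what the preceding proposition yields with the convention $X_0 = \mathrm{pt}$.

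For the inductive step, assume the statement for $X_{n-1}$. The inductive basis of $H^*(X_{n-1}, R)$ consists of products $\theta_{n-1,*}(\omega_{i_1 j_1}^{\beta_1}) \smile \cdots \smile \theta_{n-1,*}(\omega_{i_{k-1} j_{k-1}}^{\beta_{k-1}})$ with $j_1 < \cdots < j_{k-1} \le n-1$. Tensoring with the obvious $R$-basis $\{1\} \cup \{\omega_{in}^g\}_{i<n,\, g \in G} \cup \{\omega_{nn}^p\}_{p \in Z \setminus \{p_\infty\}}$ of $W_n^*$ and applying the Leray--Hirsch isomorphism $a \otimes b \mapsto \pi_n^*(a) \smile \theta_{n,*}(b)$ produces a family of basis elements of the desired shape, provided one can identify $\pi_n^*(\theta_{n-1,*}(\omega_{ij}^\beta))$ with $\theta_{n,*}(\omega_{ij}^\beta)$ for $j < n$.

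This identification is the only nontrivial point, and it reduces to naturality: for $j < n$ the form $\omega_{ij}^\beta$ on $X_n$ is literally the pullback by $\pi_n$ of the corresponding form on $X_{n-1}$ (it depends only on $z_i$ and $z_j$, both among the first $n-1$ coordinates); combined with the chain-level identity $\int_{\sigma^s} \pi_n^* \omega = \int_{\pi_n \circ \sigma^s} \omega$ and the definition $\theta_{m,*} = (N^R_{X_m,1-})^{-1} \circ \phi_{1-}^R$, this yields the required compatibility. Granted it, and using that $\pi_n^*$ is multiplicative with respect to $\smile$, the Leray--Hirsch map sends the tensor product of the inductive basis of $H^*(X_{n-1},R)$ with the basis of $W_n^*$ exactly to the displayed family, completing the induction. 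The main (minor) obstacle is the naturality check just outlined; everything else is bookkeeping in the Leray--Hirsch splitting.
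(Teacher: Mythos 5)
Your proposal is correct and follows essentially the same route as the paper, which deduces this corollary by induction from the Leray--Hirsch splitting $H^*(X_{n-1},R)\otimes_R W_n^*\simeq H^*(X_n,R)$; your explicit naturality check identifying $\pi_n^*(\theta_{n-1,*}(\omega_{ij}^\beta))$ with $\theta_{n,*}(\omega_{ij}^\beta)$ for $j<n$ is exactly the point the paper leaves implicit, and it is justified as you say by the chain-level pullback formula and the naturality of $N^R_{-,*}$.
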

\begin{corollary}
For $k\geq2$, $P_{X_n}(t)=P_{F_n}(t)P_{X_{n-1}}(t)$, where $P_{X_k}$ and $P_{F_k}$ the poincaré series of $X_k$ and $F_k$ respectively.
\end{corollary}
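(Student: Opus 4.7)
The corollary will follow almost immediately from the preceding Leray--Hirsch proposition, combined with the freeness results of the earlier corollaries. The plan is the following.

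First I would invoke the graded isomorphism of $R$-modules
\[
H^*(X_{n-1},R)\otimes_R W_n^* \;\xrightarrow{\;\sim\;}\; H^*(X_n,R)
\]
provided by the Leray--Hirsch proposition just proved, via $a\otimes b \mapsto \pi_n^*(a)\smile \theta_{n,*}(b)$. Next I would identify $W_n^*$ with $H^*(F_n,R)$: the previous proposition shows that $i_n^*\circ \theta_{n,*}\vert_{W_n^*}\colon W_n^*\to H^*(F_{n,P},R)$ is an isomorphism of graded $R$-modules, and since the bundle $F_n\to X_n\to X_{n-1}$ has all fibres homeomorphic to $F_n$, one has $H^*(F_{n,P},R)\simeq H^*(F_n,R)$ as graded $R$-modules. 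Combining these two identifications yields a graded $R$-module isomorphism
\[
H^*(X_n,R) \;\simeq\; H^*(X_{n-1},R)\otimes_R H^*(F_n,R).
\]

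To pass from this graded isomorphism to the claimed identity of Poincaré series, I would use the already-established corollary that each $H^k(X_m,R)$ is a finitely generated free $R$-module (and $H^*(F_n,R)$ is likewise free of finite type, being concentrated in degrees $0$ and $1$ with ranks $1$ and $\alpha_n=\vert G\vert(n-1)+\vert Z\vert-1$ respectively). For graded free $R$-modules of finite type $A^*$ and $B^*$, the degree-$k$ part of $A^*\otimes_R B^*$ is $\bigoplus_{i+j=k} A^i\otimes_R B^j$, whose rank is $\sum_{i+j=k}\mathrm{rk}(A^i)\,\mathrm{rk}(B^j)$; hence the Poincaré series of the tensor product is the product of the Poincaré series.

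Applying this to the displayed isomorphism gives $P_{X_n}(t)=P_{X_{n-1}}(t)\,P_{F_n}(t)$, which is the statement. There is really no obstacle here: the only thing one must be careful about is having the freeness and finite-generation hypothesis in place before taking ranks degree by degree, and this has been secured by the earlier corollary. The induction setting up the factorisation $P_{X_n}(t)=\prod_{k=1}^n(1+\alpha_k t)$ announced in the introduction is then an immediate consequence, using $P_{F_n}(t)=1+\alpha_n t$ and $X_0$ a point (so $P_{X_0}(t)=1$).
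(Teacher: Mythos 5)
Your proposal is correct and follows essentially the same route as the paper: the Leray--Hirsch splitting $H^*(X_n,R)\simeq H^*(X_{n-1},R)\otimes_R W_n^*$ with $W_n^*\simeq H^*(F_n,R)$, together with the freeness and finite generation of the cohomology modules established in the preceding corollaries, gives the factorisation of Poincaré series degree by degree. No gap to report.
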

Since the Poincaré series of $P_{F_k}=(1+\alpha_kt)$, where $\alpha_k=\vert G \vert (k-1) +\vert Z \vert -1$,
\begin{corollary}\label{Po Ser}
For $n\geq 1$:
$$P_{X_n}(t)=\underset{k=1}{\overset{n}{\prod}} (1+\alpha_kt),$$
where $\alpha_k=\vert G \vert (k-1) +\vert Z \vert -1$.
\end{corollary}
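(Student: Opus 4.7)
The plan is a direct induction on $n$. The preceding corollary supplies the crucial multiplicative factorization $P_{X_n}(t) = P_{F_n}(t)\, P_{X_{n-1}}(t)$ for $n \geq 2$, and the sentence immediately before the statement records $P_{F_n}(t) = 1 + \alpha_n t$ with $\alpha_n = \vert G \vert(n-1) + \vert Z \vert - 1$. This identification uses that $F_n$ is homeomorphic to $\PP^1$ minus $\vert Z \vert + (n-1)\vert G \vert$ points (as noted at the end of Subsection \ref{S21}), and that such a punctured sphere is homotopy equivalent to a wedge of $\alpha_n = \vert Z \vert + (n-1)\vert G \vert - 1$ circles, whose Poincaré polynomial is $1 + \alpha_n t$.

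For the base case $n = 1$, I would observe that $X_1 = \PP^1 \setminus Z$ is a $2$-sphere with $\vert Z \vert$ punctures (the assumption $(G,Z) \neq (\{1\},\emptyset)$ together with proposition \ref{prop bir} guarantees $\vert Z \vert \geq 1$), so $P_{X_1}(t) = 1 + (\vert Z \vert - 1)t = 1 + \alpha_1 t$, matching the claimed formula. The inductive step is then a one-line calculation:
\[
P_{X_n}(t) = P_{F_n}(t)\, P_{X_{n-1}}(t) = (1 + \alpha_n t) \prod_{k=1}^{n-1}(1 + \alpha_k t) = \prod_{k=1}^{n}(1 + \alpha_k t),
\]
using the inductive hypothesis and the previous corollary.

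There is no genuinely hard step: the nontrivial work has already been done in establishing the Leray-Hirsch splitting $H^*(X_n,R) \simeq H^*(X_{n-1},R) \otimes W_n^*$ earlier in the subsection, which immediately yields the multiplicativity of the Poincaré series. The only point to be careful about is the identification of the fiber, but this is entirely topological and was handled when the fibration $F_n \to X_n \to X_{n-1}$ was first introduced.
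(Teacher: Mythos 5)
Your proof is correct and follows essentially the same route as the paper, which deduces the formula directly from the factorization $P_{X_n}(t)=P_{F_n}(t)P_{X_{n-1}}(t)$ and the identification $P_{F_k}(t)=1+\alpha_k t$; you merely make the induction and the base case $X_1=\PP^1\setminus Z$ explicit. Nothing is missing.
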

\begin{remark}
One can show that the existence of a cohomology extension implies that $\pi_1(X_{n-1})$ acts trivially in the cohomology of the fiber and therefore also acts trivially in the homology of the fiber (since $H_*(F_n,R)$ is free).
\end{remark}
\begin{proposition}\label{prop Phi}
\begin{itemize}
\item[1)] For $\omega \in \Omega_D^*(X_n)_R$, the image of $\phi_*(\omega)$ lies in $R$ and the map $\phi_*$ induces a well-defined morphism of graded $R$-modules:
$$\phi_*^R: \Omega_D^*(X_n)_R \to \Hom_\Z(H_*(X_n,\Z),R),\omega \mapsto ([\sigma] \mapsto \int_{\sigma^s} \omega).$$
\item[2)]The map $\Phi_R:=(N_{X_n,*}^R)^{-1}\circ \phi_{*}^{R}:(\Omega_D^*(X_n)_R,\wedge) \to (H^*(X_n,R),\smile)$ is a morphism of $R$-algebras.
\end{itemize}
\end{proposition}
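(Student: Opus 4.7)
My strategy is to establish the proposition by first constructing the analogous morphism over $\C$, where the multiplicative statement is essentially the De Rham theorem, and then descending to $R$-coefficients using the fact that $H^*(X_n,R)$ is a free $R$-module. Concretely, I would define $\Phi_\C:\Omega_D^*(X_n)_\C\to H^*(X_n,\C)$ as the composition $(N_{X_n,*}^\C)^{-1}\circ\int\circ T_\C$, where $T_\C$ sends a closed complex-valued form on $X_n$ to its De Rham class. The map $T_\C$ is visibly a morphism of graded $\C$-algebras (the product is the wedge on representatives), and by the De Rham theorem recalled in Subsection \ref{S DR} the composition $(N_{X_n,*}^\C)^{-1}\circ\int$ is an isomorphism of $\C$-algebras. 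Hence $\Phi_\C$ is a morphism of graded $\C$-algebras.

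For item $(1)$, I would use Corollary \ref{cor base} to write any $\omega\in\Omega_D^*(X_n)_R$ as an $R$-linear combination of wedge products of the $1$-forms $\omega_{ij}^\beta$. Since $\Phi_\C$ is multiplicative, it suffices to prove that each $\Phi_\C(\omega_{ij}^\beta)$ lies in the natural subring $H^*(X_n,R)\subset H^*(X_n,\C)$ — the inclusion being injective because $H^*(X_n,R)$ is a free $R$-module, as shown earlier in this subsection. In degree $1$ the map $\Phi_\C$ coincides with $(N_{X_n,1}^R)^{-1}\circ\phi_1^R$, which is well-defined into $H^1(X_n,R)$ by Proposition \ref{prop phi1}. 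Arbitrary cup products and $R$-linear combinations of such classes then remain in $H^*(X_n,R)$, so $\Phi_\C(\omega)\in H^*(X_n,R)$ for every $\omega\in\Omega_D^*(X_n)_R$. Transporting this statement across the Kronecker-pairing isomorphisms $N_{X_n,*}^R$ and $N_{X_n,*}^\C$ — which are compatible with the inclusion $R\subset\C$ — yields that $\phi_*(\omega)$ has values in $R$, providing the factorisation $\phi_*^R$ and thus item $(1)$.

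Item $(2)$ is then essentially automatic: with $(1)$ established, $\Phi_R=(N_{X_n,*}^R)^{-1}\circ\phi_*^R$ agrees with the restriction of $\Phi_\C$ to $\Omega_D^*(X_n)_R$ followed by the corestriction to $H^*(X_n,R)$; both inclusions $\Omega_D^*(X_n)_R\hookrightarrow\Omega_D^*(X_n)_\C$ and $H^*(X_n,R)\hookrightarrow H^*(X_n,\C)$ are subalgebra inclusions, and $\Phi_\C$ is multiplicative, hence so is $\Phi_R$. The main obstacle in this plan is concentrated in item $(1)$, and the only non-formal input there is the reduction to $1$-forms via the spanning set of Corollary \ref{cor base} together with the appeal to Proposition \ref{prop int} (through Proposition \ref{prop phi1}) to ensure that the periods of the $1$-form generators belong to $R$. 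Once this is granted, the entire multiplicative structure is transported from the De Rham theorem at no extra cost.
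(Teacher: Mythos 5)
Your proposal is correct and follows essentially the same route as the paper: use the De Rham theorem to get multiplicativity of the complex-valued integration map $(N_{X_n,*}^\C)^{-1}\circ\int\circ T_\C$, reduce to products of the degree-one generators (whose periods lie in $R$ by Proposition \ref{prop phi1}, hence Proposition \ref{prop int}), and descend to $R$-coefficients via the compatibility of the Kronecker maps $N_{X_n,*}^R$ and $N_{X_n,*}^\C$ with the inclusion $R\subset\C$. The paper makes this compatibility explicit with a commutative diagram of cochain complexes induced by $i\colon R\to\C$, but the substance of the argument is the same as yours.
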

\begin{proof}
Recall that $H^*(X_n,K)$ for $K$ an abelian group is the cohomology of the cochain complex $\Hom_Z(C_*(X),K)$, where $C_*(X)$ is the singular chain complex of $X$ (over $\Z$). The inclusion $i: R\to \C$ induces a natural map of cochain complexes $i':\Hom_Z(C_*(X),R)\to \Hom_Z(C_*(X),\C)$ compatible with the cup product, and we have a commutative diagram:
\[\begin{tikzcd}
\Hom_\Z(H_*(X_n,\Z),\C) &\arrow{l}[swap]{N_{X_n,*}^\C} H^*(X_n,\C)\\
\Hom_\Z(H_*(X_n,\Z),R) \arrow{u}{\Hom_\Z(H_*(X_n,\Z),i)} &\arrow{l}{N_{X_n,*}^R} \arrow{u}{} \arrow{u}[swap]{H^*(i')} H^*(X_n,R)
\end{tikzcd} \]
where $H^*(i')$ is a morphism of algebras. In particular, if $[\sigma_i]_\C \in H^*(X_n,\C)$ (for $i\in [1,k]$) is equal to $H^*(i')[\sigma_i]_R$ for $[\sigma_i]_R\in H^*(X_n,R)$, then:
$$ N_{X_n,*}^\C([\sigma_1]_\C \smile \cdots \smile [\sigma_k]_\C)= \Hom_\Z(H_*(X_n,\Z),i)(N_{X_n,*}^R)^{-1}([\sigma_1]_R \smile \cdots \smile [\sigma_k]_R).$$
Recall that $\phi_*=\int \circ T_R$. Hence, $(N_{X,*}^\C)^{-1}\circ \phi^*=(N_{X,*}^\C)^{-1} \circ \int \circ T_R$ is a map of algebras. Indeed, $(N_{X,*}^\C)^{-1} \circ \int=\int'$ (Cf. Subsection \ref{S DR}) and $T_R$ are both morphisms of algebras. From this and the last equation containing cohomology classes, we deduce (since $\phi_1^R$ is the "restriction" of $\phi_*$) that:
$$ \phi_*(\omega_1\wedge \cdots \wedge \omega_k)=\Hom_\Z(H_*(X_n,\Z),i)(N_{X_n,*}^R)^{-1}( \Phi_{1,R}(\omega_1) \smile \cdots \smile \Phi_{1,R}(\omega_k) ),$$
for $\omega_1,\dots,\omega_k \in \Omega_D^1(X_n)_R$ and where $\Phi_{1,R }:= (N_{X_n,1}^R)^{-1}\phi_1^R$. In particular, the image of $\phi_*(\omega_1\wedge \cdots \wedge \omega_k)$, with $\omega_i \in \Omega_D^1(X_n)_R$, lies in $R$ and hence $\phi_*$ induces a well-defined integration morphism $\phi_*^R:\Omega_D^*(X_n)_R\to \Hom_\Z(H_*(X_n,\Z),R), \omega \mapsto ([\sigma] \mapsto \int_{\sigma^s} \omega)$. This proves point (1) of the proposition. Point (2) also follows from the last equation. We have proved the proposition.
\end{proof}

\begin{proposition}
\begin{itemize}
\item[1)] The maps $\Psi_R: A_n(R) \to \Omega_D^*(X_n)_R$ and $\Phi_R: \Omega_D^*(X_n)_R \to H^*(X_n,R)$, of propositions \ref{prop Psi} and \ref{prop Phi}, are isomorphisms of graded $R$-algebras.
\item[2)] The family of products in corollary \ref{cor base} and there analogues for $A_n(R)$ form basis of $\Omega_D^*(X_n)_R$ and $A_n(R)$ respectively.
\end{itemize}
\end{proposition}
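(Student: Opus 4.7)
The strategy is to exploit the fact that all three $R$-modules $A_n(R)$, $\Omega_D^*(X_n)_R$ and $H^*(X_n,R)$ come equipped with a family of products indexed by the same combinatorial data (namely sequences $(i_1,j_1,\beta_1),\dots,(i_k,j_k,\beta_k)$ with $j_1<\cdots<j_k$), that the three maps are known to be compatible with these families, that a basis result is already available on the cohomology side (corollary \ref{cor base co}), and that everything follows from a simple dimension-counting/linear-independence argument once one observes that $\Phi_R$ is an algebra morphism.

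First I would note that in degree $\leq 1$ the map $\Phi_R$ coincides by construction with $\theta_{n,*}$, since both are defined as $(N_{X_n,*}^R)^{-1}\circ \phi_*^R$. Since $\Phi_R$ is a morphism of $R$-algebras (proposition \ref{prop Phi}(2)), I can then compute
\[
\Phi_R\bigl(\omega_{i_1j_1}^{\beta_1}\wedge \cdots \wedge \omega_{i_kj_k}^{\beta_k}\bigr)
=\theta_{n,*}(\omega_{i_1j_1}^{\beta_1})\smile \cdots \smile \theta_{n,*}(\omega_{i_kj_k}^{\beta_k}).
\]
Thus $\Phi_R$ sends the generating family of $\Omega_D^*(X_n)_R$ from corollary \ref{cor base} bijectively onto the basis of $H^*(X_n,R)$ from corollary \ref{cor base co}.

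From this I deduce that the generating family of $\Omega_D^*(X_n)_R$ is $R$-linearly independent (any relation among its elements would map to a relation among basis elements of $H^*(X_n,R)$), hence forms a basis. Consequently $\Phi_R$, being a graded $R$-linear map sending a basis bijectively to a basis, is an isomorphism of graded $R$-algebras. This proves half of $(1)$ and the first half of $(2)$.

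For $\Psi_R$, I would consider the composite $\Phi_R\circ \Psi_R\colon A_n(R)\to H^*(X_n,R)$. By construction it sends the analogous generating family of $A_n(R)$ (proposition \ref{prop Psi}(2)) to the same basis of $H^*(X_n,R)$. The same linear-independence argument shows that this family is a basis of $A_n(R)$ and that $\Phi_R\circ\Psi_R$ is an isomorphism; combined with the surjectivity of $\Psi_R$ already established in proposition \ref{prop Psi}(1), this forces $\Psi_R$ itself to be an isomorphism, completing both $(1)$ and $(2)$. There is no genuine obstacle here: the only mild subtlety is to make sure that the identification $\Phi_R|_{\Omega_D^{1}}=\theta_{n,1}$ is used correctly to ensure the images of the wedge products are \emph{literally} the basis elements of corollary \ref{cor base co} (rather than some rearrangement), but this is immediate from the algebra morphism property of $\Phi_R$ and the definition of $\theta_{n,*}$.
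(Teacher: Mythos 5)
Your proposal is correct and follows essentially the same route as the paper, whose proof is a one-line citation of exactly the ingredients you use: the algebra-morphism property of $\Phi_R$ and $\Psi_R$, the spanning families of corollary \ref{cor base} and proposition \ref{prop Psi}, and the basis of corollary \ref{cor base co}, combined via the same send-spanning-set-to-basis linear-independence argument. Your explicit check that $\Phi_R$ agrees with $\theta_{n,*}$ in degrees $\leq 1$ is a detail the paper leaves implicit, but it is the right justification.
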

\begin{proof}
The proposition follows from the fact that $\Phi_R$ and $\Psi_R$ are algebra morphisms (propositions \ref{prop Psi} and \ref{prop Phi}), corollary \ref{cor base}, proposition \ref{prop Psi} and corollary \ref{cor base co}.
\end{proof}
Denote by $\Omega^*(X_n)_\C$ the commutative cochain algebra of complex valued differential forms on $X_n$, with differential the exterior differential and product the wedge product.
\begin{corollary}
The commutative cochain algebras $\Omega_D^*(X_n)_\C$, $H^*(X_n,\C)$ and $\Omega^*(X_n)_\C$, where the first two algebras are equipped with a zero differential, are weakly equivalent.
\end{corollary}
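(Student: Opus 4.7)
The plan is to produce a short zig-zag of cdga morphisms
\[
H^*(X_n,\C) \xleftarrow{\Phi_\C} \Omega_D^*(X_n)_\C \xrightarrow{\iota} \Omega^*(X_n)_\C,
\]
each arrow of which is a quasi-isomorphism; here $\iota$ is the natural inclusion, and the first two algebras carry the zero differential. Once the zig-zag is in place, weak equivalence of the three cdga's follows directly from the definition recalled in Subsection \ref{S formality}.

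For the left arrow, I would simply invoke the previous proposition at $R=\C$: it says that $\Phi_\C$ is an isomorphism of graded $\C$-algebras, hence (since both sides carry the zero differential) an isomorphism of cdga's, and in particular a quasi-isomorphism.

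For the right arrow even to be a cdga morphism, I need to check that the exterior differential of $\Omega^*(X_n)_\C$ restricts to zero on the image of $\iota$. This is immediate from the construction: by Definition \ref{def forms} and Subsection \ref{S31}, $\Omega_D^*(X_n)_\C$ is generated as a $\C$-algebra by the \emph{closed} $1$-forms $\omega_{ij}^g$ and $\omega_{kk}^p$, so every element of $\Omega_D^*(X_n)_\C$ is a closed smooth form. Compatibility with the wedge product is tautological, so $\iota$ is a cdga morphism.

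The main point, and really the only step that uses the work of the previous subsection, is showing that $\iota$ is a quasi-isomorphism. Here I would recall from the proof of Proposition \ref{prop Phi} the factorization $\Phi_\C = (N^\C_{X_n,*})^{-1} \circ \int \circ\, T_\C$, where $T_\C : \Omega_D^*(X_n)_\C \to H^*_{DR}(X_n,\C)$ sends a closed form to its de Rham class and $(N^\C_{X_n,*})^{-1} \circ \int$ is the de Rham isomorphism of Subsection \ref{S DR}. Since $\Phi_\C$ is an isomorphism and the de Rham map is an isomorphism, $T_\C$ must be an isomorphism of graded $\C$-vector spaces. But $T_\C$ is exactly the map induced by $\iota$ on cohomology, because the source carries the zero differential so its cohomology equals itself. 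Thus $\iota$ is a quasi-isomorphism, completing the zig-zag and establishing the corollary; no step is a real obstacle, the content is concentrated in the already-proved fact that $\Phi_\C$ is an algebra isomorphism.
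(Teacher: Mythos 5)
Your proposal is correct and follows essentially the same route as the paper: it uses the previous proposition to get that $\Phi_\C$ is an isomorphism, observes that the inclusion $\Omega_D^*(X_n)_\C \to \Omega^*(X_n)_\C$ is a cdga morphism because the generating forms are closed, and deduces that the induced map $T_\C$ in cohomology is an isomorphism from the factorization of $\Phi_\C$ through the de Rham isomorphism.
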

\begin{proof}
The previous proposition states that $\Omega_D^*(X_n)_\C$ and $H^*(X_n,\C)$ are isomorphic. Since the forms in $\Omega_D^*(X_n)_\C$ are closed the inclusion $\tilde{T}_\C:\Omega_D^*(X_n)_\C \to \Omega^*(X_n)_\C $ is a map of cochain algebras. Since the map induced by $\tilde{T}_\C$ in cohomology is the map $T_\C$ and corresponds to $(\int')^{-1}\circ \Phi_C$, where $\int'$ is the De Rham isomorphism, we deduce that $T_\C$ is an isomorphism and $\tilde{T}_\C$ is a quasi-isomorphism.
\end{proof}
\begin{corollary}
The space $X_n$ is formal in the sense of rational homotopy theory, i.e. the commutative cochain algebra $H^*(X_n,\Q)$ with zero differential is weakly equivalent to the cochain algebra $A_{PL}^*(X_n)_\Q$ of polynomial differential forms on $X$ with coefficient in $\Q$.
\end{corollary}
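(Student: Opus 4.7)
The plan is to package the previous corollary with the general facts recalled in subsection \ref{S formality}. From the Poincaré series formula (corollary \ref{Po Ser}) the rational cohomology of $X_n$ is finite dimensional, so proposition \ref{prop weak gen} applies and gives a weak equivalence of commutative cochain algebras $A_{PL}^*(X_n)_\C \simeq \Omega^*(X_n)_\C$. Combined with the corollary just proved, which connects $\Omega^*(X_n)_\C$ to $H^*(X_n,\C)$ (with zero differential) through the subalgebra $\Omega_D^*(X_n)_\C$, we obtain a chain of quasi-isomorphisms exhibiting formality over $\C$.

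More precisely, first I would record the chain
\[
A_{PL}^*(X_n)_\C \;\simeq\; \Omega^*(X_n)_\C \;\xleftarrow{\tilde T_\C}\; \Omega_D^*(X_n)_\C \;\xrightarrow{\Phi_\C}\; H^*(X_n,\C),
\]
in which the first weak equivalence comes from proposition \ref{prop weak gen}, the second arrow is the inclusion (a cdga map with zero right-hand differential, which is a quasi-isomorphism by the previous corollary), and the third is the isomorphism $\Phi_\C$ of the preceding proposition, viewed as a cdga map when both sides carry the zero differential. This exhibits $A_{PL}^*(X_n)_\C$ as weakly equivalent to $H^*(X_n,\C)$ with trivial differential, i.e.\ $X_n$ is formal over $\C$.

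To descend from $\C$ to $\Q$, I would invoke the criterion cited at the end of subsection \ref{S formality}: since the rational cohomology of $X_n$ is finite dimensional (again by corollary \ref{Po Ser}), formality over $\C$ is equivalent to formality over $\Q$. Therefore $A_{PL}^*(X_n)_\Q$ is weakly equivalent to $H^*(X_n,\Q)$ with zero differential, which is the desired statement.

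The only subtle point, and the one I would be careful to spell out, is that each arrow in the chain is genuinely a morphism of cdga's (not merely of graded algebras): the inclusion $\tilde T_\C$ is so because elements of $\Omega_D^*(X_n)_\C$ are closed forms, and $\Phi_\C$ is so by proposition \ref{prop Phi}. Everything else is a direct application of the general facts already recorded.
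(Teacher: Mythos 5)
Your argument is correct and follows essentially the same route as the paper: establish formality over $\C$ by chaining the weak equivalence $A_{PL}^*(X_n)_\C \simeq \Omega^*(X_n)_\C$ (proposition \ref{prop weak gen}, justified by finite dimensional rational cohomology) with the previous corollary relating $\Omega^*(X_n)_\C$, $\Omega_D^*(X_n)_\C$ and $H^*(X_n,\C)$, and then descend to $\Q$ using the criterion from subsection \ref{S formality}. Your extra care in checking that each arrow is a genuine cdga morphism is a harmless elaboration of what the paper leaves implicit.
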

\begin{proof}
Since the rational cohomology of $X_n$ is of finite type, the formality of $A_{PL}^*(X_n)_F$ for any field $F$ containing $\Q$ for $X_n$ will imply the formality for $\Q$ and $A_{PL}^*(X_n)_\C$ will be weakly equivalent to $\Omega^*(X_n)_\C$ (Cf. Subsection \ref{S formality}). Hence, the above corollary becomes a consequence of the previous one.
\end{proof}
Recall that $\Gamma:=G^n\rtimes \mathfrak{S}_n$ acts on $C_n^G(\PP^1\setminus Z)$. The quotient map $\pi: X_n\to X_n/\Gamma$ is a covering map and hence $\pi$ induces a cdga isomorphism $\pi^*:\Omega^*(X_n/\Gamma)_\C \to \Omega^*(X_n)_\C^{\Gamma}$ between complexes of complex valued forms (the superscript $\Gamma$ is (and will be) used for $\Gamma$ invariants). We know that $\Omega_D^*(X_n)_\C$ is stable under $\Gamma$ (Cf. Section \ref{S31}) and that the inclusion $\Omega_D^*(X_n)_\C\to \Omega^*(X_n)_\C$ is a quasi-isomorphism. It follows that the last inclusion also induces a quasi-isomorphism $\Omega_D^*(X_n)_\C^\Gamma \to \Omega^*(X_n)_\C^{\Gamma}$. In particular, $\Omega^*(X_n/\Gamma)_\C$ is quasi-isomorphic to its cohomology (and $\Omega_D^*(X_n)_\C^\Gamma$). One can derive from this that $X_n/ \Gamma$ is formal.

\subsection{The case $(G,Z)=(1,\emptyset)$, i.e. $X_n=C_n(\PP^1)$}\label{S42}
Let $R \subset \C$ be an unital ring. The space $\PP^1$ is homeomorphic to $S^2 \subset \R^3$. We will sometimes switch spaces implicitly for convenience. One can derive the formality of the upcoming spaces by theoretic arguments. Here, we derive this from the description of the cohomology using differential forms, since we are interested in the description also.\\\\
By definition $C_1(\PP^1)$ is equal to $\PP^1$. The singular cohomology algebra $H^*(\PP^1,R)$ of $\PP^1$ is isomorphic to the $R$-subalgebra of differential forms generated by the volume form $\omega=\frac{i}{2\pi}\frac{dz\wedge d\bar{z}}{(1+ \vert z \vert^2)^2}$ (the integral of $\omega$ over $\PP^1$ with its canonical orientation is equal to $1$).\\\\
We consider the case $n=2$. It is known that the projection $\pi_2:C_2(S^2) \to C_1(S^2)$ is a homotopy equivalence with homotopy inverse $s$ given by $s(p)=(p,-p)$. Indeed, $\pi_2 \circ s= \mathrm{id}$, and $$H((x_1,x_2),t)=(x_1, \frac{(1-t)x_2-tx_1}{ \vert\vert (1-t)x_2-tx_1\vert \vert }),$$ where $\vert \vert \cdot \vert \vert$ is the Euclidean norm in $\R^3$, is a homotopy between $s\circ \pi_2$ and $\mathrm{id}$. Hence, $H^*(C_2(\PP^1),R)$ correspond to the $R$-subalgebra of differential forms generated by $\pi_2^* \omega$ and $C_2(\PP^1)$ is formal . \\\\
We examine $C_n(\PP^1)$, for $n\geq 3$. We assume that $R$ is a principle ideal domain. For $\underline{p}=(p_1,p_2,p_3)\in (\PP^1)^3$ where the coordinates are pairwise distinct (i.e. a projective basis of $\PP^1$), there exists a unique $h_{\underline{p}}\in \PGL(\C^2)$ mapping $p_1,p_2,p_3$ to $0,1, \infty$ respectively. The homography $h_{\underline{p}}$ is given by:
$$h_{\underline{p}}(z)=\frac{(z-p_1)(p_2-p_3)}{(p_2-p_1)(z-p_3)}.$$
This proves that $\PGL(\C^2)$ acts freely transitively on $C_3(\PP^1)$. Hence, $C_3(\PP^1)$ is diffeomorphic to $\PGL(\C^2)$ and for $n\geq 3$, we have an homeomorphism:
\begin{align}\label{Split Fib}\begin{split}
C_n(\PP^1)&\to C_3(\PP^1)\times C_{n-3}(\PP^1\setminus \{0,1,\infty\})\\ (p_1,\dots, p_n) &
\mapsto ((p_1,p_2,p_3), (h_{\underline{p}}(p_4),\dots,h_{\underline{p}}(p_n)))\end{split},\end{align}
where ${\underline{p}}=(p_1,p_2,p_3)$.
This decomposition was used in \cite{ziegC} to compute the cohomology of $C_n(\PP^1)$ for $n\geq 3$.\\\\ Using the $QR$ decomposition one obtains that $C_3(\PP^1)\simeq \PGL(\C^2)$ is diffeomorphic to $\PSU(\C^2)\times \R^3$. We have $\PSU(\C^2) \simeq \mathrm{SO}(\R^3)$ and $\mathrm{SO}(\R^3)$ is homeomorphic to the three-dimensional projective real space $\PP^3(\R)$. In particular, $C_3(\PP^1)$ is homotopy equivalent to $\PP^3(\R)$ (see also remark \ref{rmk steif} section \ref{S5}) and for $\frac{1}{2} \notin R$, the ring $H^*(C_n(\PP^1),R)$ can not be described as an algebra of closed differential forms, since it contains torsion. Now take a volume form $\omega_V$ on $\PSU(\C^2)$ for which $\PSU(\C^2)$ have volume $1$ (one can normalize any volume form) and denote by $\omega'_V$ the pullback of $\omega_V$ by the smooth map $C_3(\PP^1)\simeq \PGL(\C_2)\simeq \PSU(\C^2)\times \R^3\to \PSU(\C^2)$, where the last map is the projection. Using the cross product one can check that, for $R$ containing $\frac{1}{2}$, the ring $H^*(C_n(\PP^1),R))$ is isomorphic via integration to the subalgebra of closed differential forms on $C_3(\PP^1)\times C_{n-3}(\PP^1\setminus \{0,1,\infty\})$ given by $\Omega_{3,R}^*\otimes \Omega_D^*(C_n(\PP^1\setminus \{0,1,\infty\})$ (tensor with Koszul sign convention), where $\Omega_{3,R}=(R)_0\oplus (R\omega_V')_3$ (indices for the degree) is the subalgebra generated by $\omega_V'$. This gives a description of $H^*(C_n(\PP^1),R)$ as an algebra of closed differential forms (for $\frac{1}{2}\in R$) via integration and proves that $C_n(\PP^1)$ is formal.\\\\
We have shown that $C_n(\PP^1)$ is formal for $n\geq 1$, and described the singular cohomology ring of $C_n(\PP^1)$ with coefficient in an unital ring $R\subset \C$, using differential forms with constraints on $R$ for $n\geq 3$. One can easily derive presentations of the cohomology ring from what we have seen above and (for $n\geq 3$) the presentation of the cohomology ring of $C_n(\PP^1\setminus \{0,1,\infty\})$ from the previous section (see also \cite{ziegC}).
\begin{proposition}\label{cor poinc S2}
The Poincaré series $P_n$ of $C_n(\PP^1)$ is given by:
$$P_1(t)=P_2(t)=t^2, \quad \text{and} \quad P_n(t)=t^3\prod_{k=1}^{n-3} (1+\beta_kt), $$
for $n\geq$ and $\beta_k=1+k$.
\end{proposition}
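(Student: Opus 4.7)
The plan is to read off the Poincaré series directly from the identifications already recorded in Subsection \ref{S42}, with Corollary \ref{Po Ser} handling the ``hyperplane-complement'' factor in one stroke.

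For $n=1$ one has $C_1(\PP^1)=\PP^1\simeq S^2$, so $P_1(t)=1+t^2$. For $n=2$, the subsection has already exhibited an explicit homotopy equivalence $\pi_2:C_2(\PP^1)\to C_1(\PP^1)$ (with inverse $p\mapsto(p,-p)$ and a concrete deformation $H$), whence $P_2(t)=P_1(t)=1+t^2$.

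For $n\geq 3$ the main input is the homeomorphism (\ref{Split Fib}),
\begin{equation*}
C_n(\PP^1)\;\simeq\; C_3(\PP^1)\times C_{n-3}(\PP^1\setminus\{0,1,\infty\}).
\end{equation*}
I would apply the Künneth formula over $\Q$ (equivalently over a PID containing $\tfrac12$, matching the convention of Subsection \ref{S42}), which splits the Poincaré series as a product
\begin{equation*}
P_n(t)=P_{C_3(\PP^1)}(t)\cdot P_{C_{n-3}(\PP^1\setminus\{0,1,\infty\})}(t).
\end{equation*}
For the second factor, Corollary \ref{Po Ser} applied with $G=\{1\}$ and $Z=\{0,1,\infty\}$ yields $\prod_{k=1}^{n-3}(1+\alpha_k t)$ with $\alpha_k=|G|(k-1)+|Z|-1=k+1=\beta_k$, exactly matching the exponents in the claim. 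For the first factor, I would invoke the chain of diffeomorphisms $C_3(\PP^1)\simeq \PGL(\C^2)\simeq \PSU(\C^2)\times\R^3\simeq \PP^3(\R)\times\R^3$ recalled in the subsection, so that $C_3(\PP^1)$ is homotopy equivalent to $\PP^3(\R)$, whose rational Poincaré polynomial is $1+t^3$. Multiplying the two factors gives the formula for $n\geq 3$.

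There is no genuine obstacle, since every geometric ingredient has already been established. The only point that requires a moment of care is the coefficient ring used in the Künneth formula: the $2$-torsion in $H^*(\PP^3(\R),\Z)$ forces one to work away from characteristic $2$, but this is precisely the standing convention of Subsection \ref{S42} (rational coefficients, or a PID with $\tfrac12$ invertible), so the product formula for Poincaré series applies without adjustment.
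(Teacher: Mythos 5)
Your route is exactly the paper's: $C_1(\PP^1)=\PP^1$, the homotopy equivalence $C_2(\PP^1)\simeq\PP^1$ via $p\mapsto(p,-p)$, the splitting (\ref{Split Fib}) together with $C_3(\PP^1)\simeq\PP^3(\R)$ up to homotopy, multiplicativity of the Poincaré series for the product (the paper does not name K\"unneth but uses the same fact), and Corollary \ref{Po Ser} with $(G,Z)=(\{1\},\{0,1,\infty\})$ giving $\alpha_k=k+1=\beta_k$ for the second factor. The only divergence is in the answer itself: you obtain $P_1(t)=P_2(t)=1+t^2$ and $P_n(t)=(1+t^3)\prod_{k=1}^{n-3}(1+\beta_k t)$, which is what the standard definition of the Poincar\'e series gives, whereas the proposition as printed (and the paper's proof, which asserts that the Poincar\'e series of the $2$-sphere is $t^2$ and that of $\PP^3(\R)$ is $t^3$) drops the degree-zero terms; so your computation proves the corrected form of the statement rather than the printed one, a discrepancy that also propagates to the factor $\tfrac{1}{t^3}P_n(t)$ in Proposition \ref{LCS S2}. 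One minor remark: since Betti numbers are ranks of rational cohomology, K\"unneth over $\Q$ applies with no hypothesis on $\tfrac12$; the $2$-torsion of $\PP^3(\R)$ only matters for the differential-form description of the ring, not for the Poincar\'e series.
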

\begin{proof}
$C_1(\PP^1)$ is a $2$-sphere. Hence, $P_1(t)=t^2$. The space $C_2(\PP^1)$ is homotopy equivalent to $\PP^1$ as seen previously in this subsection and hence $P_2=P_1$. For $n\geq 3$, $C_n(\PP^1) \simeq C_3(\PP^1) \times C_{n-3}(\PP^1\setminus \{0,1,\infty\})$. Therefore, for $n\geq 3$, $P_n$ is the product of the Poincaré series of $C_3(\PP^1)$ and $C_{n-3}(\PP^1\setminus \{0,1,\infty\})$. We have seen in this subsection that $C_3(\PP^1)$ is homotopy equivalent to $\PP^3(\R)$ and hence its Poincaré series is given by $t^3$. The Poincaré series of $C_{n-3}(\PP^1\setminus \{0,1,\infty\})$ is equal to the other factor in the formula above by corollary \ref{Po Ser}.
\end{proof}
\section{Homotopy groups and LSC formula}\label{S5}
In the first subsection, we consider the case $(G,Z)\neq (\{1\},\emptyset)$. We prove that $X_n:=C_n^G(\PP^1\setminus Z)$ is a $K(\pi,1)$ and that the fibration $X_{n+1} \to X_n$ admits a cross section. The cross section is used to prove that the first homotopy group $\pi_1X_n$ of $X_n$ is an iterated almost direct product of free groups (Cf. Section \ref{S IA} for the definition). This gives an LCS formula relating the Poincaré series of $X_n$ to the rank of quotients of successive terms of the lower central series of $\pi_1X_n$. In the second subsection, we study the remaining case $X_n=C_n(\PP^1)$. We give the higher homotopy groups of $X_n$ known in the literature, describe the structure of the fundamental group of $X_n$ (the structure is also known, in fact a presentation is known) and show that we have an LCS formula.\\\\
Let $H$ be a group. We denote by $\{\Gamma_i H\}_{i\geq1}$ the lower central series of $H$ and by $H^{ab}$ the abelianization of $H$ (i.e. $H/\Gamma_2 H$).
\subsection{The case $(G,Z)\neq (\{1\}, \emptyset)$}
Till the end of this subsection we assume that $(G,Z)\neq (\{1\}, \emptyset)$.
\begin{proposition}
The space $X_n$ is aspheric, i.e. a $K(\pi,1)$ space.
\end{proposition}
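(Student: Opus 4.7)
The plan is to proceed by induction on $n$, using the fiber bundle $F_n \to X_n \xrightarrow{\pi_n} X_{n-1}$ from Theorem 2.1 (applied in Subsection 2.1), together with the long exact sequence of homotopy groups.

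For the base case $n=1$, we have $X_1 = \PP^1 \setminus Z$. The hypothesis $(G,Z)\neq(\{1\},\emptyset)$ combined with Proposition 1.6 (if $G\neq\{1\}$, then $Z$ must contain the irregular points of $G$, hence is non-empty, and if $G=\{1\}$ then $Z\neq\emptyset$ by hypothesis) guarantees $Z\neq\emptyset$. Thus $X_1$ is homeomorphic either to $\C$ (if $|Z|=1$) or to $\C$ minus finitely many points (if $|Z|\geq 2$). In both cases $X_1$ is homotopy equivalent to a point or to a wedge of circles, which are known to be aspherical.

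For the inductive step, assume $X_{n-1}$ is a $K(\pi,1)$. The fiber $F_n$ is homeomorphic to $\PP^1$ minus $|Z|+(n-1)|G|$ points, and since $|Z|\geq 1$, $F_n$ is $\PP^1$ with at least one puncture, i.e. $\C$ with finitely many points removed. Hence $F_n$ is aspherical by the same reasoning as above. Writing a fragment of the long exact sequence of the fibration
\[
\cdots \to \pi_{k}(F_n) \to \pi_k(X_n) \to \pi_k(X_{n-1}) \to \pi_{k-1}(F_n) \to \cdots,
\]
one gets, for every $k\geq 2$, that $\pi_k(F_n)=0$ by asphericity of $F_n$ and $\pi_k(X_{n-1})=0$ by the induction hypothesis. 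By exactness, $\pi_k(X_n)=0$ for all $k\geq 2$.

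There is no real obstacle here: the only thing one needs to check carefully is that the base case and the fiber $F_n$ genuinely are aspherical surfaces-with-punctures, which requires the non-emptiness of $Z$ — and that is precisely what the hypothesis $(G,Z)\neq(\{1\},\emptyset)$, together with Proposition 1.6, guarantees. Since $X_n$ is also path-connected (noted in Subsection 2.1), it is a $K(\pi_1 X_n,1)$, which completes the proof.
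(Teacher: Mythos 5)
Your proof is correct and follows essentially the same route as the paper: both use that the fiber of $X_n \to X_{n-1}$ is a punctured sphere (nonempty puncture set since $Z \neq \emptyset$), hence aspheric, and that $X_1 = \PP^1\setminus Z$ is aspheric, then conclude via the long exact sequence of the fibration. The only cosmetic difference is that you run an upward induction while the paper iterates the exact sequences down to $X_1$.
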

\begin{proof}
The fiber $F_{n+1}$ of $X_{n+1}\to X_n$ is a $2$-sphere with finite number (nonzero) of punctures and hence aspheric. Using this, we deduce from the long exact sequence of the fibration the following exact sequences:
$$ 0\to \pi_k(X_{n+1})\to \pi_k( X_n) \to 0\quad \text{for $k\geq 3$} \qquad \text{and} \quad 0\to \pi_2(X_{n+1})\to \pi_2( X_n) \to \pi_{1} F_{n+1} .$$
Hence, $\pi_k(X_{n+1})=\pi_k(X_1)=\pi_k(\PP^1\setminus Z)$ for $k\geq 3$, and that $\pi_2(X_{n+1})$ injects into $\pi_2(X_1)=\pi_2(\PP^1 \setminus Z)$. This shows that $X_n$ is aspheric, since $\PP^1\setminus Z$ is aspheric.
\end{proof}

We now study the fundamental group of $X_n$. Since $F_{n+1},X_{n+1}$ and $X_n$ are $K(\pi,1)$ spaces and are path connected. The long exact sequence of the fibration $X_{n+1}\to X_n$ gives the exact sequence:
\begin{equation}\label{Suite ex}
1\to \pi_1(F_{P,n+1},\tilde{P})\to \pi_1(X_{n+1},\tilde{P})\to \pi_1(X_n,P)\to 1,
\end{equation}
where $\tilde{P} \in X_{n+1}$ is a preimage of $P\in X_n$, $F_{P,n+1}$ is the fiber over $P$ and the maps are induced by the inclusion $F_{P,n+1} \to X_{n+1}$ and the projection $X_{n+1}\to X_n$.
\begin{proposition}
The fibration $X_{n+1}\to X_{n}$ admits a continuous cross-section $s_n:X_n \to X_{n+1}$.
\end{proposition}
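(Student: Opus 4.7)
The plan is to build the section by hand after a harmless normalization. By Remark \ref{rmk infty}, an auxiliary homography gives a biholomorphism $X_n \to C_n^{hGh^{-1}}(\PP^1\setminus h(Z))$ and an analogous one for $X_{n+1}$, intertwining the two forgetting-the-last-coordinate fibrations. So without loss of generality I assume $\infty \in Z$, and identify points of $\PP^1\setminus Z$ with points of $\C \setminus (Z\setminus\{\infty\})$.

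First I record a key finiteness/continuity observation: for every $P=(p_1,\dots,p_n)\in X_n$ and every $g\in G$, the point $g\cdot p_i$ lies in $\C$, i.e.\ is not $\infty$. Indeed, if $g\cdot p_i=\infty$ then $p_i=g^{-1}\cdot\infty$, but $\infty\in Z$ and $Z$ is $G$-stable, so $g^{-1}\cdot\infty\in Z$, contradicting $p_i\in\PP^1\setminus Z$. Consequently the function
$$
M:X_n\longrightarrow \R_{\geq 0},\qquad M(P)=\max_{g\in G,\,1\leq i\leq n} |g\cdot p_i|,
$$
is a maximum of finitely many continuous $\C$-valued functions of $P$, and is therefore continuous.

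Next I choose a constant that pushes the new point past everything in sight. Set
$$
N := 1+\max\bigl\{|q|\ :\ q\in Z\setminus\{\infty\}\bigr\},
$$
which is well-defined since $Z$ is finite, and define
$$
s_n:X_n\longrightarrow X_{n+1},\qquad s_n(P):=\bigl(p_1,\dots,p_n,\,M(P)+N\bigr).
$$
The map is continuous. To see that it lands in $X_{n+1}$, note that the last coordinate $z:=M(P)+N$ satisfies $|z|>M(P)\geq |g\cdot p_i|$ for every $g\in G$ and every $i\leq n$, so $z\neq g\cdot p_i$; equivalently $p_i\neq g^{-1}\cdot z$ for all $g,i$, which is the orbit-avoidance condition for the pair $(i,n+1)$. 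Similarly $|z|>|q|$ for every $q\in Z\setminus\{\infty\}$, and $z\in\C$ so $z\neq\infty$; hence $z\notin Z$. By construction $\pi_{n+1}\circ s_n=\mathrm{id}_{X_n}$, so $s_n$ is the desired continuous cross-section.

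The only thing that could have gone wrong is the vanishing of denominators making $g\cdot p_i=\infty$, and that is precisely what the $G$-stability of $Z$ together with $\infty\in Z$ rules out; otherwise the argument is just the familiar "shift past the maximum modulus'' trick used for $C_n(\C)\to C_{n-1}(\C)$.
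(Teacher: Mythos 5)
Your construction is correct: after the normalization $\infty\in Z$ (legitimate by Remark~\ref{rmk infty}, and the conjugating biholomorphisms $h^{\times n}$, $h^{\times(n+1)}$ do intertwine the projections, so a section transports back), the $G$-stability of $Z$ guarantees $g\cdot p_i\in\C$ for all $g,i$, your $M$ is continuous, and the point $M(P)+N$ avoids $Z$ and all orbits $G\cdot p_i$, giving a continuous section. Conceptually this is the same idea as the paper's proof -- insert the $(n+1)$-st point in a small punctured neighborhood of a distinguished point of $Z$, at a position depending continuously on the configuration -- but the realizations differ: the paper works intrinsically on $\PP^1$, fixing $p_\infty\in Z$, a Riemannian metric and a short arc $\alpha$ emanating from $p_\infty$, and places the new point at arc-parameter $\min(\tfrac12\min_{i,g}d(p_\infty,g\cdot p_i),\eta)$, which avoids any coordinate change; you instead pass to the affine chart and use the classical ``shift past the maximum modulus'' trick from $C_n(\C)\to C_{n-1}(\C)$, which is more elementary (no metric, no arc) at the cost of the normalization step. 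One small point to tidy: when $Z=\{\infty\}$ (possible for $G=\{1\}$) the set $Z\setminus\{\infty\}$ is empty, so you should declare the empty maximum to be $0$ (i.e.\ $N=1$) for your formula to be well defined; with that convention the argument goes through verbatim.
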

\begin{proof}
Fix a $p_\infty \in Z$ and a Riemannian metric on $\PP^1$. Let $d$ be the distance function associated to the Riemannian metric. One can find a real number $\eta>0$ and an isometry $f: [0,\eta]\to \PP^1$ such that $\alpha(0)=p_\infty$ and $\alpha(]0,\eta])\subset \PP^1\setminus Z$. For $P=(p_1,\dots,p_n)\in X_n$, set:
$$C_{P}=\frac{1}{2}\underset{(i,g)\in [1,n]\times G}{\mathrm{Min}} d(p_\infty,g\cdot p_i) \qquad \text{and} \qquad C_{P}^\eta= \mathrm{Min}(C_{P},\eta).$$
For $(i,g)\in [1,n] \times G$, we have $d(p_\infty,\alpha(C_{P}^\eta))< d(p_\infty, g\cdot p_i)$. Therefore, the mapping $X_{n}\to X_{n+1}, (p_1,\dots,p_n)\mapsto(p_1,\dots,p_n, \alpha(C_{P}^\eta))$ defines a continuous cross section.
\end{proof}
\begin{corollary}\label{Suite spl}
The exact sequence (\ref{Suite ex}) splits and $\pi_k(X_{n+1},s_n(P))$ is isomorphic to $$\pi_1(F_{P,n+1},s_n(P)) \rtimes_{\beta_n} \pi_1(X_n,P),$$ where $\beta_n([a])([b])=[s_n(a)bs_n(a)^{-1}]$ for $a,b$ loops of $X_n$ and $F_{P,n+1}$ based at $P$ and $s_n(P)$ respectively.
\end{corollary}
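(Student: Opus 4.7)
The proof plan is the standard one for a fibration admitting a section. The first step is to note that the continuous section $s_n: X_n \to X_{n+1}$ satisfies $\pi_{n+1} \circ s_n = \mathrm{id}_{X_n}$ and maps the basepoint $P$ to $s_n(P)$, so it induces a group homomorphism $(s_n)_*: \pi_1(X_n, P) \to \pi_1(X_{n+1}, s_n(P))$ which, by functoriality, is a right inverse to $(\pi_{n+1})_*$. This immediately splits the short exact sequence \eqref{Suite ex} in the category of groups.

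Second, one applies the standard splitting lemma: for any split short exact sequence of groups $1 \to N \to G \to Q \to 1$ with section $\sigma$, every element of $G$ decomposes uniquely as $n \cdot \sigma(q)$ with $n \in N$ and $q \in Q$, producing an isomorphism $G \simeq N \rtimes_{\beta} Q$ where $\beta(q)(n) = \sigma(q)\, n\, \sigma(q)^{-1}$. In our case $N = \pi_1(F_{P,n+1}, s_n(P))$ is normal in $G = \pi_1(X_{n+1}, s_n(P))$ because it is the kernel of $(\pi_{n+1})_*$ by exactness, $Q = \pi_1(X_n, P)$, and the section is $(s_n)_*$. The action $\beta_n$ is therefore given by $\beta_n([a])([b]) = (s_n)_*([a]) \cdot [b] \cdot (s_n)_*([a])^{-1}$, which, unwinding $(s_n)_*([a]) = [s_n \circ a]$, yields the stated formula $\beta_n([a])([b])=[s_n(a) b s_n(a)^{-1}]$.

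The only point requiring care, rather than a genuine obstacle, is basepoint and subgroup bookkeeping. The loop $s_n(a)$ lives in $X_{n+1}$ but not in the fiber $F_{P,n+1}$, so a priori $s_n(a) b s_n(a)^{-1}$ is merely a loop in $X_{n+1}$ based at $s_n(P)$; its class belongs to the subgroup $\pi_1(F_{P,n+1}, s_n(P))$ precisely because this subgroup is normal, being the kernel of $(\pi_{n+1})_*$. This ensures $\beta_n$ is well-defined as an action of $\pi_1(X_n, P)$ on $\pi_1(F_{P,n+1}, s_n(P))$, and completes the proof.
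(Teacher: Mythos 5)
Your argument is correct and is exactly the standard one the paper relies on: the section $s_n$ induces a group-theoretic splitting of the exact sequence (\ref{Suite ex}), and the usual splitting lemma identifies $\pi_1(X_{n+1},s_n(P))$ with the semidirect product with conjugation action $\beta_n$; the paper states this as an immediate corollary without further proof. Your remark on normality of the fiber subgroup (kernel of $(\pi_{n+1})_*$) is the right bookkeeping point and nothing more is needed.
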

Let $1\to N\to H \to K\to 1$ be an exact sequence of groups and $s:K\to H$ a section (map of sets). We have a morphism $\alpha_s:K\to \Aut(N)$ mapping $a\in K$ to the conjugacy by $s(a)$ in $N$. The map $\alpha_s$ induces a morphism $\beta: K \to \Aut(N^{ab})$, independent of the choice of $s$.
\begin{lemma}
Let $N,H,K$ and $\beta$ be as in the previous paragraph. If the natural map $N^{ab}\to H^{ab}$ is injective then the image of $\beta$ consists of $IA$-automorphisms.
\end{lemma}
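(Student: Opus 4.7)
The plan is to unpack the definitions and reduce the statement to the elementary fact that conjugate elements have the same image in the abelianization. Fix $a \in K$ and $n \in N$, and write $\bar n$ for the image of $n$ in $N^{ab}$. By construction, $\beta(a)(\bar n)$ is the class of $s(a)\, n\, s(a)^{-1}$ in $N^{ab}$, so showing that $\beta(a)$ is trivial (which, since $N^{ab}$ is abelian, is the same as saying $\beta(a)$ is an IA-automorphism) amounts to proving the identity $\overline{s(a)ns(a)^{-1}} = \bar n$ in $N^{ab}$ for every $n \in N$ and every $a \in K$.

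To obtain this identity, I would pass to $H^{ab}$. Inside $H$, the elements $s(a)\, n\, s(a)^{-1}$ and $n$ are conjugate, hence they have the same image in $H^{ab}$. Denoting by $\iota : N^{ab} \to H^{ab}$ the natural map induced by $N \hookrightarrow H$, this says exactly that $\iota\bigl(\overline{s(a)ns(a)^{-1}}\bigr) = \iota(\bar n)$. The injectivity hypothesis on $\iota$ then lets us cancel $\iota$ and conclude that $\overline{s(a)ns(a)^{-1}} = \bar n$ in $N^{ab}$, as required. There is essentially no obstacle here: the only subtlety is to remember that while $\alpha_s$ itself depends on the choice of section $s$, its induced map $\beta$ on $N^{ab}$ does not (two sections differ by an element of $N$, whose conjugation acts trivially on $N^{ab}$), so no care is needed with the choice of $s$.
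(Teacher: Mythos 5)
Your proof is correct and follows essentially the same argument as the paper: the paper phrases the injectivity hypothesis as $N\cap (H,H)=(N,N)$ and observes that the commutator $s(a)ns(a)^{-1}n^{-1}$ lies in $N\cap(H,H)$, which is exactly your observation that $s(a)ns(a)^{-1}$ and $n$ have the same image in $H^{ab}$ and then cancelling the injective map $N^{ab}\to H^{ab}$. Your remark that triviality of $\beta(a)$ is what is meant here is also the right reading of the statement.
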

\begin{proof}
For $L$ a group, denote by $(x,y)$ the commutator of $x,y \in L$ and by $(L,L)$ the commutator subgroup of $L$. The injectiveness condition means that $N\cap (H,H)=(N,N)$. Therefore, under the injectiveness assumption, for $c\in K$: $s(c)bs(c)^{-1}b^{-1} \in N\cap (H,H)=(N,N)$ ($s:K\to H$ is the set section), and $\beta(c)(b)$ and $b$ are equal in $N^{ab}$. We have proved the proposition.
\end{proof}
\begin{lemma}
For $P\in X_n$, the inclusion of the fiber $F_{P,n+1}$ over $P$ of $X_{n+1} \to X_n$ into $X_{n+1}$, induces an injective morphism in homology.
\end{lemma}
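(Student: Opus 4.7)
The plan is to exploit the integration pairing established in Proposition \ref{prop int} together with the explicit free basis of $H_1(F_{P,n+1}, \Z)$ coming from the loops constructed in Subsection \ref{S23}. Since the fiber $F_{P,n+1}$ is homeomorphic to $\PP^1$ minus $|Z|+n|G|$ punctures, the constructions of Subsection \ref{S23} apply verbatim inside the fiber, and Corollary \ref{cor gen H1} provides a free $\Z$-basis for $H_1(F_{P,n+1},\Z)$. Moreover, the inclusion $i_{n+1}:F_{P,n+1}\to X_{n+1}$ sends each such loop to the loop of the same name in $X_{n+1}$, so the pairing formulas carry over.

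Concretely, I would fix the basepoint $\tilde P=s_n(P)$ inside $F_{P,n+1}$, and take the generators
\[
\{[x_{i,n+1}^{g}]\}_{i\in[1,n],\,g\in G}\;\cup\;\{[x_{n+1,n+1}^{p}]\}_{p\in Z\setminus\{p_\infty\}}
\]
as a $\Z$-basis of $H_1(F_{P,n+1},\Z)$. Given a class $[\gamma]\in H_1(F_{P,n+1},\Z)$ with $(i_{n+1})_*[\gamma]=0$ in $H_1(X_{n+1},\Z)$, write $[\gamma]=\sum_{i,g}c_{i,g}[x_{i,n+1}^{g}]+\sum_p d_p[x_{n+1,n+1}^{p}]$ with integer coefficients. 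Against any form $\omega_{k,n+1}^{\beta}\in\Omega^1_D(X_{n+1})_\Z$ (with $k\leq n+1$ and $\beta\in G$ or $\beta\in Z\setminus\{p_\infty\}$ according to whether $k<n+1$ or $k=n+1$), the vanishing $(i_{n+1})_*[\gamma]=0$ forces
\[
0=\int_{(i_{n+1})_*[\gamma]}\omega_{k,n+1}^{\beta}=\int_{[\gamma]}i_{n+1}^{*}\omega_{k,n+1}^{\beta}.
\]
By Proposition \ref{prop int}, the right-hand side equals the coefficient $c_{k,\beta}$ (or $d_p$), so every coefficient of $[\gamma]$ in the chosen basis must vanish, giving $[\gamma]=0$.

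The only point that needs to be checked carefully is that the orthogonality formula of Proposition \ref{prop int} really applies to the push-forwards of the fiber loops and to the restrictions of the global forms $\omega_{k,n+1}^{\beta}$: this is immediate since by construction the loops $x_{i,n+1}^{\alpha}$ in $F_{P,n+1}$ are the same smooth maps $[0,1]\to X_{n+1}$ as the loops $x_{i,n+1}^{\alpha}$ of Subsection \ref{S23} based at $\tilde P$, and the integration formula was proved in Subsection \ref{S33} for loops based at an arbitrary base configuration. Thus the proof is essentially a reading-off from the duality already set up between the families $\{x_{ij}^{\alpha}\}$ and $\{\omega_{kl}^{\beta}\}$, with no substantial obstacle. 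As an alternative route, one could instead invoke the proposition showing that the composition $W_{n+1}^{*}\to H^{*}(X_{n+1},\Z)\xrightarrow{i_{n+1}^{*}}H^{*}(F_{P,n+1},\Z)$ is an isomorphism; surjectivity of $i_{n+1}^{*}$ in degree $1$ together with freeness of $H_1(F_{P,n+1},\Z)$ (so that every $\Z$-linear functional on it extends) would then force $(i_{n+1})_*$ to be injective by a standard duality argument.
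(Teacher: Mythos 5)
Your argument is correct and is essentially the paper's own proof: the paper simply invokes Corollary \ref{cor gen H1} (whose content is exactly the integration duality of Proposition \ref{prop int} that you unroll by hand) together with the fact that $H_1(F_{P,n+1},\Z)$ is generated by the classes of the fiber loops around the punctures, so that these classes land in part of a free basis of $H_1(X_{n+1},\Z)$. The only caveat, shared with the paper's own wording, is that the loops actually contained in the fiber are those moving the last coordinate, i.e. $x_{n+1,i}^{g}$ rather than $x_{i,n+1}^{g}$; their homology classes agree up to replacing $g$ by $g^{-1}$, so the delta pairing and your conclusion are unaffected.
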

\begin{proof}
We can use corollary \ref{cor gen H1} and the fact that $H_1(F_{P,n+1},\Z)$ is generated by the loops $x_{in+1}^g, x_{n+1n+1}^p$ (with base point a preimage of $P$) for $i\in[1,n],g\in G$ and $p\in Z\setminus\{p_\infty\}$ (for any $p_\infty \in Z$).
\end{proof}
\begin{proposition}
The semidirect product $\pi_1(F_{P,n+1},s_n(P)) \rtimes_{\beta_n} \pi_1(X_n,P)$ of corollary \ref{Suite spl} is an almost direct product, i.e. the image of $\beta_n$ consists of $IA$-automorphisms.
\end{proposition}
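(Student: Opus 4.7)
The plan is to simply combine the two preceding lemmas, using the Hurewicz theorem as the bridge. Recall the first lemma asserts that in a short exact sequence $1 \to N \to H \to K \to 1$ with a set-theoretic section $s$, if $N^{ab} \to H^{ab}$ is injective then the induced conjugation morphism $\beta : K \to \mathrm{Aut}(N^{ab})$ takes values in IA-automorphisms. Applying this to the split exact sequence (\ref{Suite ex}) with $N = \pi_1(F_{P,n+1}, s_n(P))$, $H = \pi_1(X_{n+1}, s_n(P))$, $K = \pi_1(X_n, P)$, and $s = (s_n)_*$, it suffices to verify injectivity of
\[
\pi_1(F_{P,n+1}, s_n(P))^{ab} \longrightarrow \pi_1(X_{n+1}, s_n(P))^{ab}.
\]

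The fiber $F_{P,n+1}$ is a $2$-sphere with finitely many points removed (hence path connected) and $X_{n+1}$ is likewise path connected. By the Hurewicz theorem, the abelianization of the fundamental group of each space is canonically isomorphic to its first singular homology group with integer coefficients, and these Hurewicz isomorphisms are natural with respect to continuous maps. Consequently the abelianized map above is canonically identified with the homomorphism on first homology induced by the inclusion $i_{n+1}: F_{P,n+1} \hookrightarrow X_{n+1}$.

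The second preceding lemma states exactly that $(i_{n+1})_* : H_1(F_{P,n+1}, \Z) \to H_1(X_{n+1}, \Z)$ is injective (it is proved using the explicit generators $x_{i,n+1}^g$ and $x_{n+1,n+1}^p$ of $H_1(F_{P,n+1}, \Z)$ provided by Corollary \ref{cor gen H1}, which remain part of a free basis of $H_1(X_{n+1}, \Z)$). Combining the two lemmas therefore yields the desired conclusion: $\beta_n$ takes values in IA-automorphisms, so the semidirect product $\pi_1(F_{P,n+1}, s_n(P)) \rtimes_{\beta_n} \pi_1(X_n, P)$ is an almost direct product.

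There is no real obstacle here; the content of the proposition has been packaged into the two lemmas, and the only subtlety is to check the naturality of Hurewicz so that the hypothesis on homology can be transferred to the hypothesis on abelianized fundamental groups required by the criterion. An inductive reading of this proposition combined with the existence of the section $s_n$ (Proposition just above) will then realize $\pi_1(X_n)$ as an iterated almost direct product of free groups, which is the input needed for the LCS formula of Proposition \ref{LCS Formula}.
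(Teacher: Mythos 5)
Your proof is correct and follows the same route as the paper: the paper's own proof is just "the proposition follows from the last two lemmas," which is exactly your combination of the IA-criterion for $N^{ab}\to H^{ab}$ injective with the homological injectivity of the fiber inclusion, using the (natural) Hurewicz identification of $\pi_1^{ab}$ with $H_1$ that the paper leaves implicit.
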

\begin{proof}
The proposition follows from the last two lemmas.
\end{proof}
As we have already seen $F_{P,n+1}$ is a $2$-sphere minus $\alpha_{n+1}+1=\vert G \vert n +\vert Z \vert $ points. Hence, $\pi_1(F_{P,n+1},s_n(P))$ is isomorphic to the free group $F(\alpha_{n+1} )$ on $\alpha_{n+1}$ generators.
\begin{corollary}\label{cor Fun LCS}
\begin{itemize}
\item[1)]The fundamental group of $X_n$ is isomorphic to an iterated almost direct product of free groups:
$$F(\alpha_n)\rtimes_{\gamma_n} (F(\alpha_{n-1})\rtimes_{\gamma_{n-1}} (F(\alpha_{n-2})\rtimes_{\gamma_{n-1}} (\cdots \rtimes_{\gamma_{3}} (F(\alpha_2) \rtimes_{\gamma_{2}} F(\alpha_1))\cdots ))),$$
where $\alpha_k=\vert G \vert (n -1)+\vert Z \vert -1$, for $k\in [1,n]$.

\item[2)] Let $P_{X_n}$ is the Poincaré series of $X_n$:
$$P_{X_n}(t)=\underset{k=1}{\overset{n}{\prod}} (1+\alpha_kt)=\prod_{i\geq 1} (1-t^i)^{\phi_i(\pi_1X_n)},$$
where $\alpha_k=\vert G \vert (n -1)+\vert Z \vert -1$, $\phi_i(\pi_1X_n)$ is the rank of the quotient $\Gamma_i \pi_1X_n/\Gamma_{i+1} \pi_1 X_n$, with $\Gamma_l \pi_1 X_n$ the $l$-th term of the lower central series of $\pi_1 X_n$ and $\phi_i(\pi_1X_n)$ is equal to:
$$ \phi_i(\pi_1X_n)=\sum_{k=1}^n \phi_i(F(\alpha_k))=\frac{1}{i}\sum_{k=1}^n \sum_{j\vert i} \mu(j)\alpha_k^{i/j}, $$
with $\mu$ the Möbius function.
\end{itemize}
\end{corollary}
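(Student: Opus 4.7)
My plan is to argue both parts by induction on $n$, with the proposition immediately preceding the corollary serving as the inductive engine, and then to extract the LCS formula as a direct consequence of Proposition \ref{LCS Formula}.

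For part (1), I would take as base case $n=1$: here $X_1 = \PP^1 \setminus Z$ is a $2$-sphere minus $|Z|$ points, hence its fundamental group is free on $|Z|-1 = \alpha_1$ generators. For the inductive step, assuming that $\pi_1 X_n$ has already been realized as an iterated almost direct product $F(\alpha_n) \rtimes_{\gamma_n} (\cdots \rtimes_{\gamma_2} F(\alpha_1))$, I would combine Corollary \ref{Suite spl} with the preceding proposition: they furnish an isomorphism $\pi_1 X_{n+1} \simeq \pi_1(F_{P,n+1}) \rtimes_{\beta_n} \pi_1 X_n$ whose twisting $\beta_n$ consists of $IA$-automorphisms. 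Since $F_{P,n+1}$ is a $2$-sphere with $|G|n + |Z|$ punctures, its fundamental group is free on $\alpha_{n+1} = |G|n + |Z| - 1$ generators. Setting $\gamma_{n+1} := \beta_n$ (transported through the inductive isomorphism) and splicing this semidirect product on top of the one supplied by the inductive hypothesis gives (1) for $n+1$.

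For part (2), the factored expression for the Poincaré series was already obtained in Corollary \ref{Po Ser}. With the iterated almost direct product structure in hand from (1), I would invoke Proposition \ref{LCS Formula} with $c_k = \alpha_k$; this directly delivers
\[ \prod_{i \geq 1}(1-t^i)^{\phi_i(\pi_1 X_n)} \;=\; \prod_{k=1}^{n}(1 - \alpha_k t) \;=\; P_{X_n}(-t), \]
which is the claimed LCS identity. The explicit Möbius-sum expression for $\phi_i(\pi_1 X_n)$ is then an immediate substitution into formula (\ref{eq mob}).

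I do not expect a serious obstacle at this stage, since the real work sits inside the preceding proposition, namely the verification that $\beta_n$ acts by $IA$-automorphisms, which rests on the injectivity in $H_1$ of the fiber inclusion, itself controlled by the explicit meridian loops $x_{i,n+1}^{g}$ and $x_{n+1,n+1}^{p}$ from Subsection \ref{S23} together with Proposition \ref{prop int}. The only mild care required is book-keeping: one must confirm that the reindexing lines up, i.e.\ that $\alpha_k = |G|(k-1) + |Z| - 1$ simultaneously counts the free generators of $\pi_1 F_{P,k}$ and equals the coefficient of the $k$-th linear factor in $P_{X_k}$. Both identifications follow directly from the description of the fiber of $X_k \to X_{k-1}$ given in Subsection \ref{S21}.
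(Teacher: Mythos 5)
Your proposal is correct and follows essentially the same route as the paper: induction on $n$ using the splitting of Corollary \ref{Suite spl} together with the almost-direct-product proposition and the identification of $\pi_1 F_k$ with $F(\alpha_k)$, then Corollary \ref{Po Ser}, Proposition \ref{LCS Formula} and equation (\ref{eq mob}) for part (2). Your explicit base case, the splicing step, and the sign-correct form $P_{X_n}(-t)=\prod_{i\geq 1}(1-t^i)^{\phi_i(\pi_1X_n)}$ are just slightly more detailed versions of what the paper does.
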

\begin{proof}
Point $(1)$ is obtained by induction on $n$ using the previous proposition and the fact that the fundamental group of the fiber $F_k$ of $X_{k+1} \to X_k$ is $F(\alpha_k)$. We have already proved that the Poincaré series of $X_n$ is given by $P_{X_n}(t)=\prod_{k=1}^n (1+\alpha_kt)$ (corollary \ref{Po Ser}). By applying the LCS formula of proposition \ref{LCS Formula} and equation (\ref{eq mob}) (Subsection \ref{S IA}), we get the formula relating $P_{X_n}$ to the infinite product and the formula for the constants $\phi_i(\pi_1X_n)$.
\end{proof}
\begin{remark}\label{rmk Malcev}
We have introduced in \cite{MM} a graded Lie algebra $\mathfrak{p}_n(G)(\Q)$ whose degree completion correspond to the Malcev Lie algebra (\cite{Q1}) $\mathrm{Lie}_\Q\pi_1X_n$ of $\pi_1X_n$ over $\Q$, for $G\neq \{1\}$ and $Z$ equal to the set of irregular points of $G$ (the general case is treated in \cite{MMT}). Since the associated graded of $\mathrm{Lie}_\Q\pi_1X_n$ is isomorphic to $(\oplus_{i\geq 1}\Gamma_i \pi_1X_n/\Gamma_{i+1} \pi_1 X_n) \otimes \Q$ the constant $\phi_i(\pi_1X_n)$ above correspond to the dimension of the degree $i$ part of $\mathfrak{p}_n(G)(\Q)$ (the same holds for the Lie algebras in \cite{MMT}, see also proposition \ref{LCS S2} of the next subsection).
\end{remark}
\begin{remark}
Knowing that $\pi_1(X_n)$ is an iterated almost direct product of free groups, one can deduce the Poincaré series of $X_n$ from the work of \cite{CohSuc} (Cf. Remark \ref{rmk POP}, Subsection \ref{S IA}), since $X_n$ is a $K(\pi,1)$.
\end{remark}
\begin{remark}\label{rmk res}
An iterated almost direct product of free groups is residually torsion free nilpotent (\cite{BBR}, Cf. end of Section $2$). In particular, $\pi_1X_n$ is residually torsion free nilpotent. The action of $G^n$ on $X_n$ is a covering action and $X_n/G^n\simeq C_n(\C\setminus W)$ where $W$ is a finite set. We can show that $\pi_1C_n(\C \setminus W)$ contains the pure braid group $\pi_1C_n(\C)$ as a subgroup. The pure braid group $\pi_1C_n(\C)$ is not residually free for $n\geq 4$ (\cite{Rfree}) and hence $\pi_1 X_n$ is not residually free.
\end{remark}

\subsection{The case $(G,Z)=(\{1\},\emptyset)$}
In this subsection we consider the case $X_n=C_n(\PP^1)$. We recall (Cf. Subsection \ref{S32}) that $C_2(\PP^2)$ is homotopy equivalent to $C_1(\PP^1) =\PP^1\simeq S^2$, $C_3(\PP^1)$ is homotopy equivalent to the $3$-dimensional real projective plane $\PP^3(\R)$, and that $C_n(\PP^1)$ is homeomorphic to $C_3(\PP^1)\times C_n(\PP^1\setminus \{0,1,\infty\})$ for $n\geq 3$.
\begin{proposition}
The higher homotopy groups of $C_n(\PP^1)$ are given by,
$$ \pi_2C_n(\PP^1)=
\begin{cases}
\Z,& \text{if } n\leq 2\\
0, & \text{otherwise}
\end{cases} \quad \text{and} \quad \pi_kC_n(\PP^1)= \pi_kS^2 ,$$
for $k\geq 3$. The space $C_n(\PP^1)$ is simply connected if $n\leq 2$, $\pi_1C_3(\PP^1)\simeq \Z/2\Z$ and for $n> 3$, $\pi_1C_n(\PP^1)\simeq \Z/2\Z\times \pi_1C_{n-3}(\PP^1\setminus \{0,1,\infty\})$.
\end{proposition}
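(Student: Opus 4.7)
The plan is to read off the homotopy groups from the three structural facts recalled at the start of Subsection 5.2: $C_1(\PP^1)=\PP^1\simeq S^2$, $C_2(\PP^1)\simeq S^2$ (via $\pi_2$ with cross section $p\mapsto (p,-p)$), $C_3(\PP^1)\simeq \PP^3(\R)$, and the product decomposition $C_n(\PP^1)\simeq C_3(\PP^1)\times C_{n-3}(\PP^1\setminus\{0,1,\infty\})$ for $n\geq 3$ given in (\ref{Split Fib}). Since homotopy groups of spheres and real projective spaces are classical, and since products split homotopy groups, everything reduces to three computations.

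First, for $n=1$ and $n=2$, both spaces are homotopy equivalent to $S^2$, giving simple connectivity, $\pi_2=\Z$, and $\pi_k=\pi_kS^2$ for $k\geq 3$ directly. Second, for $n=3$, one uses the standard fact that the universal cover of $\PP^3(\R)$ is $S^3$, so $\pi_1C_3(\PP^1)\simeq \Z/2\Z$ and $\pi_kC_3(\PP^1)\simeq \pi_kS^3$ for $k\geq 2$. The Hopf fibration $S^1\to S^3\to S^2$ and its long exact sequence of homotopy groups then give $\pi_2S^3=0$ and $\pi_kS^3\simeq \pi_kS^2$ for $k\geq 3$, which matches the claimed values for $n=3$.

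Third, for $n\geq 4$, the key input is that the second factor $C_{n-3}(\PP^1\setminus\{0,1,\infty\})$ is a $K(\pi,1)$, which was established in the first proposition of Subsection 5.1 (the case $(G,Z)\neq(\{1\},\emptyset)$ applies since $|Z|=3$). Hence for $k\geq 2$ one has $\pi_kC_n(\PP^1)\simeq \pi_kC_3(\PP^1)\oplus \pi_kC_{n-3}(\PP^1\setminus\{0,1,\infty\})=\pi_kC_3(\PP^1)$, so the computation for $k\geq 2$ reduces to the $n=3$ case already handled. For the fundamental group the product formula $\pi_1C_n(\PP^1)\simeq \pi_1C_3(\PP^1)\times \pi_1C_{n-3}(\PP^1\setminus\{0,1,\infty\})\simeq \Z/2\Z\times \pi_1C_{n-3}(\PP^1\setminus\{0,1,\infty\})$ is immediate from the product decomposition.

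The proof is essentially bookkeeping once the three structural facts are in hand; the only nontrivial ingredient is invoking the Hopf fibration to identify $\pi_kS^3$ with $\pi_kS^2$ for $k\geq 3$. I do not foresee a real obstacle, provided one is willing to cite the $K(\pi,1)$ property from the previous subsection and the classical homotopy of $\PP^3(\R)$ and $S^3$.
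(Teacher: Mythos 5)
Your proposal is correct and follows essentially the same route as the paper: the paper's proof likewise reduces everything to the recalled facts $C_1(\PP^1)\simeq C_2(\PP^1)\simeq S^2$, $C_3(\PP^1)\simeq \PP^3(\R)$, the splitting $C_n(\PP^1)\simeq C_3(\PP^1)\times C_{n-3}(\PP^1\setminus\{0,1,\infty\})$, and the asphericity of the second factor from the previous subsection. The only difference is that you spell out the classical inputs ($S^3$ as universal cover of $\PP^3(\R)$ and the Hopf fibration giving $\pi_2S^3=0$, $\pi_kS^3\simeq\pi_kS^2$ for $k\geq 3$) which the paper leaves implicit.
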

\begin{proof}
The proposition follows from the facts reminded before it, and the fact that $C_{n-3}(\PP^1\setminus \{0,1,\infty\})$ is aspheric as we have seen in the previous subsection.
\end{proof}
\begin{remark}\label{rmk steif}
A homotopy equivalence between $C_3(\PP^1)$ and the Stiefel manifold $V_2(\R^3)\simeq \mathrm{SO}(\R^3)\simeq \PP^3(\R)$ appears in \cite{FadVan}, and the higher homotopy groups of $C_n(S^r)$ ($S^r$ the $r$-sphere) for $r\geq 2$ were computed in in term of homotopy groups of Stiefel manifolds of orthogonal frames in \cite{Fadell}.
\end{remark}
\begin{proposition}\label{LCS S2}
\begin{itemize}
\item[1)] For $n>3$, $\pi_1C_n(\PP^1)$ is the product of $\Z/2\Z$ with the iterated almost direct product:
$$F(\beta_{n-3})\rtimes_{\varepsilon_{n-3}} (F(\beta_{n-2})\rtimes_{\varepsilon_{n-2}} (F(\beta_{n-3})\rtimes_{\varepsilon_{n-3}} (\cdots \rtimes_{\varepsilon_{3}} (F(\beta_2) \rtimes_{\varepsilon_{2}} F(\beta_1))\cdots ))),$$
where $\beta_k=k+1$.
\item[2)] Denote by $P_n$ the Poincaré series of $C_n(\PP^1)$. For $n>3$:
$$ \frac{1}{t^3}P_{n}(t)=\underset{k=1}{\overset{n-3}{\prod}} (1+\beta_kt)=\prod_{i\geq 1} (1-t^i)^{\phi_i(\pi_1C_n(\PP^1))},$$
where $\beta_k$ is as in $(1)$, $\phi_i(\pi_1C_n(\PP^1))$ is the rank of the quotient $\Gamma_i \pi_1C_n(\PP^1)/\Gamma_{i+1} \pi_1 C_n(\PP^1)$, with $\Gamma_l \pi_1 C_n(\PP^1)$ the $l$-th term of the lower central series of $\pi_1 C_n(\PP^1)$ and
$$ \phi_i(\pi_1C_n(\PP^1))=\sum_{k=1}^n \phi_i(F(\beta_k))=\frac{1}{i}\sum_{k=1}^n \sum_{j\vert i} \mu(j)\beta_k^{i/j}, $$
with $\mu$ the Möbius function.
\end{itemize}
\end{proposition}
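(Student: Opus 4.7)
The plan is to derive both parts of the proposition by assembling three ingredients already established in the paper: the homeomorphism $C_n(\PP^1) \simeq C_3(\PP^1)\times C_{n-3}(\PP^1\setminus\{0,1,\infty\})$ for $n\geq 3$ from Equation \ref{Split Fib}, the homotopy equivalence $C_3(\PP^1)\simeq \PP^3(\R)$ recorded in Subsection \ref{S42} (which gives $\pi_1 C_3(\PP^1)\cong \Z/2\Z$), and Corollary \ref{cor Fun LCS} applied to the orbit configuration space $C_{n-3}^{\{1\}}(\PP^1\setminus\{0,1,\infty\})$.

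For part (1), I would first identify $C_{n-3}(\PP^1\setminus\{0,1,\infty\})$ as an instance of $C_m^G(\PP^1\setminus Z)$ with $|G|=1$, $|Z|=3$, and $m=n-3$, so that $(G,Z)\neq(\{1\},\emptyset)$ and Corollary \ref{cor Fun LCS} is applicable. Substituting these values into $\alpha_k = |G|(k-1)+|Z|-1$ gives $\alpha_k = k+1 = \beta_k$, yielding the iterated almost direct product structure $F(\beta_{n-3})\rtimes_{\varepsilon_{n-3}}\cdots\rtimes_{\varepsilon_2} F(\beta_1)$ for $\pi_1 C_{n-3}(\PP^1\setminus\{0,1,\infty\})$. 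Taking fundamental groups of the product decomposition turns the direct product of spaces into a direct product of groups, and combining with $\pi_1 C_3(\PP^1)\cong \Z/2\Z$ gives the stated structure of $\pi_1 C_n(\PP^1)$.

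For part (2), the polynomial identity $\frac{1}{t^3}P_n(t)=\prod_{k=1}^{n-3}(1+\beta_k t)$ is already given by Proposition \ref{cor poinc S2}. The LCS half of the identity follows by applying Proposition \ref{LCS Formula} to the iterated almost direct product factor $F:=F(\beta_{n-3})\rtimes\cdots\rtimes F(\beta_1)$, together with Equation \ref{eq mob} for the Möbius formula. To transfer this from $F$ to $\pi_1 C_n(\PP^1)\cong \Z/2\Z\times F$, I would invoke the elementary identity $\Gamma_i(A\times B)=\Gamma_i A\times \Gamma_i B$ for direct products of groups: this gives $\Gamma_i \pi_1 C_n(\PP^1)/\Gamma_{i+1}\pi_1 C_n(\PP^1) \cong \bigl(\Gamma_i(\Z/2\Z)/\Gamma_{i+1}(\Z/2\Z)\bigr)\oplus \bigl(\Gamma_i F/\Gamma_{i+1}F\bigr)$, where the first summand is $\Z/2\Z$ for $i=1$ and trivial for $i\geq 2$, contributing zero rank in every degree. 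Hence $\phi_i(\pi_1 C_n(\PP^1))=\phi_i(F)$ for all $i\geq 1$, and the LCS identity for $F$ transfers verbatim.

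The main obstacle is not any single deep step but rather keeping the bookkeeping consistent: one must verify that the product decomposition from Equation \ref{Split Fib} is compatible with the fibration $\pi_k$ used in the previous subsection to derive the almost-direct-product structure, and one must confirm that the finite factor $\Z/2\Z$ is indeed invisible to the $\phi_i$ invariants. The former is immediate because the homography $h_{\underline{p}}$ used in the decomposition is equivariant with respect to forgetting the last coordinate, so the fibrations used in Corollary \ref{cor Fun LCS} live entirely on the second factor $C_{n-3}(\PP^1\setminus\{0,1,\infty\})$. The latter reduces to the observation that taking ranks of an abelian group kills torsion, together with the behaviour of lower central series on direct products. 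With these two checks, the proof is essentially a repackaging of previously established results.
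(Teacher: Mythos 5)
Your proposal is correct and follows essentially the same route as the paper: both use the splitting $\pi_1C_n(\PP^1)\simeq \Z/2\Z\times \pi_1C_{n-3}(\PP^1\setminus\{0,1,\infty\})$ coming from the decomposition of Equation \ref{Split Fib}, apply Corollary \ref{cor Fun LCS} with $|G|=1$, $|Z|=3$ (so $\alpha_k=k+1=\beta_k$), quote Proposition \ref{cor poinc S2} for the Poincar\'e series, and then note that the torsion factor $\Z/2\Z$ does not affect the ranks $\phi_i$ (the paper phrases this via centrality of the $\Z/2\Z$ factor, you via $\Gamma_i(A\times B)=\Gamma_iA\times\Gamma_iB$, which amounts to the same observation). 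Your extra compatibility check between the product decomposition and the fibrations is harmless but unnecessary, since Corollary \ref{cor Fun LCS} is applied to $C_{n-3}(\PP^1\setminus\{0,1,\infty\})$ intrinsically.
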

\begin{proof}
Point $(1)$ follows from the isomorphism concerning $\pi_1C_n(\PP^1)$ in the previous proposition and $(1)$ of corollary \ref{cor Fun LCS} giving the structure of $\pi_1C_n(\PP^1\setminus \{0,1,\infty\})$. We prove $(2)$, $\pi_1C_n(\PP^1)\simeq \Z/2\Z \times K$, where $K$ is the iterated almost direct product in the proposition. Since the factor $\Z/2\Z$ is central in $\pi_1C_n(\PP^1)$, $\Gamma_l \pi_1C_n(\PP^1)=\Gamma_l K$. Therefore, the equation $\prod_{k=1}^{n-3} (1+\beta_kt)=\prod_{i\geq 1} (1-t^i)^{\phi_i(\pi_1C_n(\PP^1))}$ and the formula for $\phi_i(\pi_1C_n(\PP^1))$ can be obtained by applying the LCS formula (Cf. Proposition \ref{LCS Formula}) and equation (\ref{eq mob}) of subsection \ref{S IA} to $K$. Finally, $ \frac{1}{t^3}P_{n}(t)=\prod_{k=1}^{n-3} (1+\beta_kt)$ by proposition \ref{cor poinc S2} of subsection \ref{S42}.
\end{proof}
\begin{remark}
The iterated almost direct product in the previous proposition is residually nilpotent (Cf. Remark \ref{rmk res}) and hence $\pi_1C_n(\PP^1)$ is residually nilpotent. The group $\pi_1 C_n(\PP^1)$ is not residually free, since it contains torsion. The iterated almost product factor is not residually free for $n\geq 7$ as we have seen in remark \ref{rmk res}.
\end{remark}
\section{Appendix: finite actions on surfaces with punctures}
Let $S$ be a compact boundaryless, oriented surface and $Y$ a finite subset (eventually empty) of $S$. In this appendix, we show that if a finite group $H$ acts by orientation preserving homeomorphisms on $S\setminus Y$, then the action is the restriction of an action of $H$ on $S$ stabilizing $Y$ and that $S$ admits a complex structure in which $H$ acts holomorphically. We have applied this result in section \ref{S1} for the case $S=S^2$. The result seems to be known, but the author was not able to track a proof in the literature.\\\\
We implicitly assume that a surface is second-countable, Hausdorff and connected. A closed surface is a compact surface with no boundary. By a disc we mean a topological disk.
\begin{proposition}\label{prop disk}
Let $H$ be a finite group acting by orientation preserving homeomorphisms on an oriented connected surface $S$. For $p$ in the interior of $S$ (not on the boundary), and any neighborhood $U_p$ of $p$, there exists a closed disc $D_p\subset U_p$ containing $p$ in its interior such that:
\begin{itemize}
\item[1)] The stabilizer $H_p$ of $p$ stabilizes $D_p$.
\item[2)] The action of $H_p$ on $p$ is equivalent to the action of the cyclic subgroup of order $\vert H_p \vert$ of $\mathrm{SO}(\R^2)$ on the closed unit disk in $\R^2$. In particular, $H_p$ is cyclic.
\item[3)] The intersection $h\cdot D_p \cap D_p$, for $h\in H$, is non-empty if and only if $h\in H_p$.
\item[4)] The set of points with a non-trivial stabilizer in $D_p$ is either empty, either reduced to $p$.
\end{itemize}
\end{proposition}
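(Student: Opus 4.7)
The plan is to first linearize the action of $H_p$ near $p$ using a classical equivariant result, and then shrink the resulting disc to fit inside $U_p$ and to separate it from its translates by elements of $H\setminus H_p$. This reduces the topological statement to elementary verifications in a linear model.

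First, I would invoke the Brouwer--Ker\'ek\-j\'art\'o--Eilenberg linearization theorem: any finite group of orientation-preserving self-homeomorphisms of $\R^2$ with a common fixed point at the origin is topologically conjugate to a subgroup of $\mathrm{SO}(\R^2)$ acting by rotations. Applied in a coordinate chart around $p$ contained in $U_p$, this yields an $H_p$-equivariant homeomorphism $\varphi : N \to \R^2$ from a neighborhood $N$ of $p$ onto its image, sending $p$ to $0$, and conjugating the action of $H_p$ to an action of a finite cyclic rotation group $C \subset \mathrm{SO}(\R^2)$. In particular, $H_p \cong C$ is cyclic of order $\vert H_p\vert$. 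Choosing $r>0$ small enough that the closed ball $\overline{B}(0,r)$ lies in $\varphi(N)$ and setting $D_p := \varphi^{-1}(\overline{B}(0,r))$ gives an $H_p$-stable closed disc contained in $U_p$ with $p$ in its interior, on which $H_p$ acts as a rotation group. This settles (1) and (2).

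Next, to arrange (3), I would shrink $r$ further. The orbit $H\cdot p$ is finite, so for $h \in H \setminus H_p$ one has $h(p) \neq p$, and $S$ is Hausdorff. By compactness of $D_p$, continuity of each $h$, and the finiteness of $H \setminus H_p$, we can pick $r$ small enough so that $h(D_p) \cap D_p = \emptyset$ for every $h \in H\setminus H_p$, while keeping $D_p$ both $H_p$-stable and contained in $U_p$. Property (4) then follows at once from (2) and (3): any nontrivial element of $H_p$ acts on $D_p$ as a nontrivial rotation and hence fixes only $p$, whereas (3) prevents any element of $H \setminus H_p$ from fixing a point of $D_p$; so the only possible point of $D_p$ with nontrivial stabilizer is $p$ itself.

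The main obstacle is the linearization step: the fact that a finite orientation-preserving action on $\R^2$ fixing the origin is topologically conjugate to a rotation action is a classical but nontrivial result (essentially Ker\'ek\-j\'art\'o's theorem, revisited by Eilenberg and, more recently, by Constantin--Kolev), which I would invoke rather than reprove. Once linearization is in hand, points (1)--(4) reduce to routine compactness-and-continuity arguments on $\R^2$.
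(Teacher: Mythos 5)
There is a genuine gap at the linearization step. The theorem you invoke is about a finite group acting on $\R^2$ (equivalently on an open disc) with a common fixed point, but $H_p$ does not act on a coordinate chart: for a chart neighborhood $N$ of $p$ and $h\in H_p$ one has $h\cdot N\neq N$ in general, so there is no action of $H_p$ on $\R^2$ to which the Ker\'ekj\'art\'o--Eilenberg theorem can be applied, and no reason for the conjugating map $\varphi$ to be $H_p$-equivariant. To make your plan work you must first produce an $H_p$-invariant neighborhood of $p$ that is homeomorphic to $\R^2$ (or an $H_p$-invariant closed disc), and this is precisely the nontrivial part: the naive candidate $\bigcap_{h\in H_p}h\cdot N$ is an open planar set but need not be a disc. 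The paper handles exactly this point: it first chooses pairwise disjoint neighborhoods of the points of the orbit $H\cdot p$ to secure the separation property (3), then takes a small disc $D'$ inside $\bigcap_{h\in H_p}h\cdot D$ and cites the planar-topology fact (Constantin--Kolev, Prop.\ 2.4) that the closure of the connected component of $\bigcap_{h\in H_p}h\cdot \mathring{D}'$ containing $p$ is a closed disc; only then can a Ker\'ekj\'art\'o-type theorem be applied to that invariant disc. Since the action is only by homeomorphisms, you also cannot shortcut this by averaging a metric to get an invariant disc.

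Apart from this, your outline is compatible with the paper's: the shrinking argument for (3) (finitely many $h\notin H_p$ move $p$, so a small enough invariant disc is disjoint from its translates) and the deduction of (4) from (2) and (3) are exactly as in the paper. A minor difference is that you invoke the group form of the linearization theorem (which immediately gives that $H_p$ is conjugate to a finite subgroup of $\mathrm{SO}(\R^2)$, hence cyclic), whereas the paper only uses the single-homeomorphism version (a periodic orientation-preserving homeomorphism of the closed disc is conjugate to a rotation) and then proves cyclicity of $H_p$ by observing that $H_p$ acts freely on the boundary circle of the invariant disc, so that $H_p$ is a quotient of $\pi_1$ of the quotient circle, i.e.\ of $\Z$. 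If you repair the invariant-disc step, either route completes the proof.
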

\begin{proof}
In this proof, we mean by "a disk $D$ around $p$" a closed disk $D$ containing $p$ in its interior $\mathring{D}$. Note that $(3)$ and $(2)$ imply $(4)$ and that the condition $D_p \subset U_p$ follows from $(2)$, since $(2)$ implies that the disc can be taken as small as needed. Therefore, we will only prove the existence of a disk satisfying $(1)$, $(2)$ and $(3)$. We first show that a disk around $p$ satisfying $(3)$ exists. Take a neighborhood $V_q$ for each $q\in H\cdot p$, such that $V_q$ and $V_{q'}$ are disjoint for $q\neq q'$, and set $V'_p=\cap_{h\in H} h^{-1} V_{h\cdot p}$. The set $V'_p$ is a neighborhood of $p$ such that $h'\cdot V'_p \subset V_{h'\cdot p}$, for $h'\in H$. Hence, any disk around $p$ contained in $V'_p$ satisfies $(3)$. Let $D$ be such a disk and $D'$ a closed disk (around $p$) lying in the non-empty interior of $\cap_{h\in H_p} h\cdot D \subset D$. For $h\in H_p$, $h\cdot D'$ lies $\mathring{D}$. Since all the disks $h\cdot D'$ lie in $\mathring{D}$, it follows form a general argument on intersection of disks in $\R^2$ (\cite{const}, proposition 2.4) that the closure $J_p$ of the connected component of $\cap_{h\in H_p} h\cdot \mathring{D}'$ containing $p$ is a closed disk. Clearly $J_p$ satisfies $(1)$ and $(3)$. We still have to prove that $(2)$ is satisfied for $J_p$ and $H_p$. An orientation preserving homeomorphism $f$ of finite order $k$ of the unit closed disc $D"\subset \R^2$ is conjugate to a rotation of $D"$ (\cite{const}, theorem 3.1). Hence, it suffices to prove that $H_p$ is cylic. By the same conjugacy result $H_p$ acts freely on the boundary $\partial J_p\simeq S^1$ of $J_p$. In particular, $J_p\to (\partial J_p) /H_p$ is a normal cover and $H_p$ is a quotient of $\pi_1((\partial J_p)/H_p)$. The quotient $(\partial J_p)/H_p) $ is a circle, since it is a compact boundaryless $1$-dimensional manifold. Hence, $H_p$ is a quotient of $\Z$ and $H_p$ is cyclic. We have proved the proposition.
\end{proof}

\begin{corollary}\label{cor fix}
The set of points in $S$ having a non-trivial stabilizer is finite if $S$ is closed.
\end{corollary}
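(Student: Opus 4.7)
The plan is to combine the local statement of Proposition \ref{prop disk} with compactness of $S$. Let $\mathrm{Irr}(H) \subset S$ denote the set of points with non-trivial stabilizer. The goal is to show $\mathrm{Irr}(H)$ is both closed and discrete in $S$; since $S$ is compact, this forces $\mathrm{Irr}(H)$ to be finite.

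First I would show $\mathrm{Irr}(H)$ is closed. Write $\mathrm{Irr}(H) = \bigcup_{h \in H \setminus \{1\}} \mathrm{Fix}(h)$, where $\mathrm{Fix}(h) = \{p \in S \mid h \cdot p = p\}$. Since $S$ is Hausdorff and the maps $p \mapsto h \cdot p$ and $p \mapsto p$ are continuous, each $\mathrm{Fix}(h)$ is closed (as an equalizer of two continuous maps into a Hausdorff space). The group $H$ being finite, $\mathrm{Irr}(H)$ is a finite union of closed sets and therefore closed.

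Next I would show $\mathrm{Irr}(H)$ is discrete in $S$. Let $p \in S$; since $S$ is a closed surface it has empty boundary, so $p$ lies in the interior and Proposition \ref{prop disk} applies. It produces a closed disk $D_p$ containing $p$ in its interior $\mathring{D}_p$ such that, by point $(4)$, the only point in $D_p$ with non-trivial stabilizer is possibly $p$ itself. Thus $\mathring{D}_p$ is an open neighborhood of $p$ meeting $\mathrm{Irr}(H)$ in at most $\{p\}$, which shows $\mathrm{Irr}(H)$ is discrete as a subspace of $S$ (and also as a subset of $S$).

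Finally, a closed discrete subset of a compact space is finite: the open cover of $\mathrm{Irr}(H)$ by the neighborhoods $\mathring{D}_p$ (each isolating one irregular point) together with $S \setminus \mathrm{Irr}(H)$ admits a finite subcover, which bounds $|\mathrm{Irr}(H)|$. The main point is really just the local step, which is already packaged in Proposition \ref{prop disk}; the rest is a standard compactness argument, so I do not expect any serious obstacle.
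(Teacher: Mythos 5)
Your proposal is correct and follows essentially the same route as the paper: discreteness comes from point $(4)$ of Proposition \ref{prop disk}, closedness from writing the irregular set as a finite union of fixed-point sets (your equalizer argument is the same as the paper's preimage-of-the-diagonal argument), and finiteness then follows from compactness.
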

\begin{proof}
Denote by $X$ the set in the corollary. Point $(4)$ of the last proposition implies that $X$ is discrete. The corollary follows, since $X$ is closed. We prove that $X$ is closed. For $h\in H$ denote by $f_h$ the map $S\to S\times S,p\mapsto(p,h\cdot p)$. The diagonal $\Delta_S\subset S\times S$, is closed and $X=\cup_{h\in H} f_{h}^{-1}(\Delta_S)$. Hence, $X$ is closed.
\end{proof}

\begin{proposition}\label{prop quot}
Let $H$ be a finite group acting by orientation preserving homeomorphisms on a connected closed oriented surface. Denote by $Y$ the set of points having a non-trivial stabilizer. The quotient space $S/H$ is a surface and the quotient map $(S\setminus Y)\to (S\setminus Y)/H$ is a normal cover.
\end{proposition}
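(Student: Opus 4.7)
The plan is to settle the covering-space statement and the manifold structure together, using the invariant disks supplied by Proposition \ref{prop disk}. Corollary \ref{cor fix} already gives $Y$ finite, so $S\setminus Y$ is open in $S$, and by definition of $Y$ the action of $H$ on $S\setminus Y$ is free. For $p\in S\setminus Y$ one has $H_p=\{1\}$, so Proposition \ref{prop disk}(3) produces a disk $D_p$ with $h\cdot D_p\cap D_p=\emptyset$ for every $h\neq 1$; together with finiteness of $H$ this exhibits the action on $S\setminus Y$ as properly discontinuous and free. Once we know the quotient is Hausdorff, the standard criterion then yields that $(S\setminus Y)\to (S\setminus Y)/H$ is a regular covering with deck group $H$.

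To show that $S/H$ is a surface, I would verify the three properties: Hausdorff, second countable, and locally Euclidean of dimension $2$. For Hausdorffness, given distinct orbits $H\cdot p$ and $H\cdot q$ in $S$ (disjoint finite sets in the Hausdorff space $S$), one separates the pairs of points pairwise, intersects translates and saturates to obtain disjoint $H$-invariant open neighborhoods, which descend to disjoint neighborhoods of $[p]$ and $[q]$ in $S/H$. Second countability follows from second countability of $S$ together with the openness of the quotient map $\pi:S\to S/H$, since $\pi^{-1}(\pi(U))=\bigcup_{h\in H}h\cdot U$ is a finite union of open sets for every open $U\subset S$.

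For the locally Euclidean property, fix $[p]\in S/H$ with representative $p$ and let $D_p$ be a disk as in Proposition \ref{prop disk}. Items $(1)$ and $(3)$ imply that the composition $D_p\hookrightarrow S\to S/H$ factors through $D_p/H_p$ and that the induced continuous map $\overline{\phi}:D_p/H_p\to S/H$ is injective with image $\pi(D_p)$. Since $D_p/H_p$ is compact and $S/H$ is Hausdorff, $\overline{\phi}$ is a topological embedding; since $\pi$ is open, $\overline{\phi}$ sends the image of the interior of $D_p$ homeomorphically onto an open neighborhood of $[p]$ in $S/H$. By item $(2)$, the $H_p$-action on $D_p$ is topologically conjugate to the action of the cyclic group generated by $z\mapsto e^{2i\pi/|H_p|}z$ on the closed unit disk in $\C$; the polynomial map $z\mapsto z^{|H_p|}$ identifies this quotient with the closed unit disk, so $D_p/H_p$ is a topological disk, giving the desired chart around $[p]$.

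The main obstacle I expect is in the previous paragraph: verifying that $\overline{\phi}:D_p/H_p\to S/H$ is not merely a continuous injection but a homeomorphism onto an open neighborhood of $[p]$. This relies crucially on item $(3)$ of Proposition \ref{prop disk} (to rule out accidental identifications coming from $H\setminus H_p$), together with compactness of $D_p/H_p$, Hausdorffness of $S/H$, and openness of $\pi$. Once this is in place, identifying $D_p/H_p$ with a disk via $z\mapsto z^{|H_p|}$ is routine, and the covering statement for $S\setminus Y\to (S\setminus Y)/H$ follows immediately from free proper discontinuity.
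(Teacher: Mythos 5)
Your proposal is correct and follows essentially the same route as the paper: openness of the quotient map, Hausdorffness of $S/H$, and the invariant disks of Proposition \ref{prop disk} (points (2) and (3)) to get both the evenly covered neighborhoods over $(S\setminus Y)/H$ and the chart $\pi(D_p)\simeq D_p/H_p\simeq$ disk. You simply spell out the details (injectivity of $D_p/H_p\to S/H$, the compactness/Hausdorff embedding argument, and the $z\mapsto z^{|H_p|}$ identification) that the paper leaves implicit.
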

\begin{proof}
We will use the notations of the previous proposition. The quotient map $\pi: S\to S/H$ is open. The space $S/H$ is Hausdorff, since $S$ is Hausdorff and $H$ is finite. Point $(2)$ of the previous proposition implies that $(S\setminus Y)\to (S\setminus Y)/H$ is a (normal) cover and that $\pi(D_p) \simeq D_p/H_p$. By $(3)$ of the previous proposition $D_p/H_p$ is a disk this proves that the neighborhood $\pi(D_p)$ ($\pi$ is open) of $\pi(p)$ is Euclidean. We have proved the proposition.
\end{proof}
\begin{lemma}
Let $H,S$ and $Y$ be as in the previous proposition. Denote by $\pi$ the quotient map $S\to S/H$. The surfaces $S$ and $S/H$ admit complex structures (Riemann surface structures) such that $\pi$ is holomorphic.
\end{lemma}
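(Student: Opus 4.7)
The plan is to construct an $H$-invariant complex structure on $S$ (so that $H$ acts by biholomorphisms) and then descend it to $S/H$ via the local normal forms provided by Proposition \ref{prop disk}. Recall that $Y$, the set of points with non-trivial stabilizer, is finite by Corollary \ref{cor fix}.

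First, since $S$ is a closed oriented topological surface it admits a smooth structure, and by a classical smoothing theorem for finite group actions on surfaces we may arrange that $H$ acts smoothly. I then pick any Riemannian metric $g_0$ on $S$ and set $g:=\tfrac{1}{|H|}\sum_{h\in H}h^{*}g_0$, an $H$-invariant Riemannian metric. Its conformal class, together with the orientation, determines via isothermal coordinates a complex structure $\mathcal{J}$ on $S$, and $\mathcal{J}$ is $H$-invariant because $H$ preserves both the conformal class and the orientation. Hence every $h\in H$ becomes a biholomorphism of $(S,\mathcal{J})$.

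Next I define a complex atlas on $S/H$. On $\pi(S\setminus Y)$, Proposition \ref{prop quot} asserts that $\pi$ is a covering map, so any holomorphic chart on $S\setminus Y$ descends to a holomorphic chart on $(S/H)\setminus\pi(Y)$. At a branch point $q=\pi(p)$ with $p\in Y$, set $m:=|H_p|$; by Proposition \ref{prop disk}(2) the group $H_p$ is cyclic. Pick a generator $g_0\in H_p$ and a holomorphic chart $(U,z)$ centered at $p$. Then $g_0^{*}z=\zeta z+O(z^2)$ for some primitive $m$-th root of unity $\zeta$, and the averaged coordinate
\[
w:=\frac{1}{m}\sum_{k=0}^{m-1}\zeta^{-k}(g_0^{k})^{*}z
\]
satisfies $dw|_p=dz|_p\neq 0$ and $g_0^{*}w=\zeta w$, so $w$ is a holomorphic chart near $p$ in which $H_p$ acts by $w\mapsto \zeta w$. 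The function $w^{m}$ is then $H_p$-invariant and descends to a homeomorphism of a neighborhood of $q$ in $S/H$ onto a disc in $\C$, defining a chart at $q$. On the overlap with the chart inherited from $(S/H)\setminus\pi(Y)$ the transition is holomorphic off $q$ and continuous at $q$, hence holomorphic by Riemann's removable singularity theorem. In these charts $\pi$ is locally $w\mapsto w^{m}$, which is holomorphic.

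The main subtlety is the first step, smoothing the continuous $H$-action to a smooth one; this is a classical but non-trivial input going back to work on smoothing finite group actions on surfaces (Moise, Bing, Fenn, etc.). An alternative that avoids this altogether is to first put a complex structure on the topological surface $S/H$ (every closed oriented topological $2$-manifold admits one), pull it back via the covering $\pi\colon S\setminus Y\to(S/H)\setminus\pi(Y)$, and then extend across each $p\in Y$ by composition with an $m$-th root, using the topological normal form of Proposition \ref{prop disk} together with Riemann's removable singularity theorem to verify that the resulting atlas is holomorphic and that $\pi$ has the standard branched local model.
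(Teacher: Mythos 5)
Your ``alternative'' route at the end is exactly the paper's proof: the paper simply observes that $S/H$ is a closed surface (orientable, since the action preserves orientation), hence admits a complex structure, and that this structure lifts through $\pi$ to $S$, citing \cite{Fult} \S 19.B for lifting Riemann surface structures through the branched covering --- the lift being possible because $S\setminus Y\to (S\setminus Y)/H$ is a covering (Proposition \ref{prop quot}) and the branch points have the cyclic rotation model of Proposition \ref{prop disk}. Your sketch of extending across the punctures by taking $m$-th roots and invoking Riemann's removable singularity theorem is precisely what that citation packages, so this part is correct and coincides with the paper.

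Your primary route (construct an $H$-invariant complex structure on $S$ first, then descend) is sound in its later steps: the averaging coordinate $w$, the primitivity of $\zeta$ (a nontrivial power of $g_0$ with derivative $1$ at $p$ would be the identity near $p$, hence on all of $S$), and the chart $w^m$ at a branch point are all standard and correct. The genuine weak point is the very first step: pulling back a Riemannian metric by $h$ requires $h$ to be differentiable, so you must invoke the theorem that a finite group of \emph{homeomorphisms} of a closed surface can be smoothed. That is a heavier result than the lemma itself, and its usual proofs go through precisely the quotient-and-lift argument of this appendix (indeed through the holomorphicity statement of Proposition \ref{prop Com Str}, which this lemma feeds into); since the stated purpose of the appendix is to give a self-contained argument for a fact the author could not locate in the literature, resting the proof on that black box is close to circular. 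So the lemma should be proved as in your alternative route --- which is the paper's two-line argument --- with the smoothing approach at best a remark.
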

\begin{proof}
The quotient space $S/H$ is a closed surface and hence admits a complex structure. Such a structure can be lifted to $S$ via $\pi$ (Cf. \cite{Fult} §19.B) and $\pi$ becomes holomorphic.
\end{proof}
\begin{proposition}\label{prop Com Str}
If $H$ is a finite group acting by orientation preserving homeomorphisms on a closed surface $S$ then there is a complex structure on $S$ in which $H$ acts holomorphically.
\end{proposition}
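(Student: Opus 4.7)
The plan is to use the preceding lemma to install a complex structure on $S$ in which the quotient map $\pi : S \to S/H$ is holomorphic, and then show that this same structure is one in which $H$ acts holomorphically. Let $Y \subset S$ be the (finite, by Corollary \ref{cor fix}) set of points with non-trivial stabilizer. The strategy splits naturally into checking holomorphicity of each $h\in H$ first on the open dense set $S\setminus Y$ and then extending across the discrete set $Y$ by removable singularities.

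First I would fix the complex structure on $S/H$ and lift it via $\pi$ as in the lemma; call the resulting Riemann surface structure on $S$ the one we will use. By Proposition \ref{prop quot}, the restriction $\pi : S\setminus Y \to (S\setminus Y)/H$ is a normal topological cover, hence a local homeomorphism; since $\pi$ is holomorphic, it is in fact a local biholomorphism on $S\setminus Y$. Now fix $h\in H$ and a point $p\in S\setminus Y$. Then $h(p)$ is also in $S\setminus Y$ (because $Y$ is $H$-stable), and we may pick open neighborhoods $U$ of $p$ and $V$ of $h(p)$ on which $\pi|_U$ and $\pi|_V$ are biholomorphisms onto a common open set of $S/H$. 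The identity $\pi\circ h = \pi$ then gives locally
\[
h|_U = (\pi|_V)^{-1}\circ \pi|_U,
\]
which is a composition of biholomorphisms, so $h$ is holomorphic at $p$. This shows that every $h\in H$ is holomorphic on $S\setminus Y$.

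It remains to extend holomorphicity across the points of $Y$. For $p\in Y$, the map $h:S\to S$ is continuous at $p$, and holomorphic on a punctured neighborhood of $p$ by the previous step. Since $Y$ is discrete (by Proposition \ref{prop disk}(4) and Corollary \ref{cor fix}), we may shrink the neighborhood so that $p$ is its only point of $Y$; in local coordinates around $p$ and $h(p)$, the map $h$ is then a bounded holomorphic function on a punctured disc, continuous at the puncture. Riemann's removable singularity theorem gives that $h$ is holomorphic at $p$. Hence every $h\in H$ is holomorphic on all of $S$, which is the proposition.

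I expect the main subtlety, rather than a real obstacle, to be justifying that $\pi$ is a \emph{local biholomorphism} on $S\setminus Y$ rather than merely a holomorphic covering; this is where Proposition \ref{prop disk} and Proposition \ref{prop quot} do the real work, since the branch locus of $\pi$ is contained in $Y$. Once that is in hand, the rest is essentially formal: use $\pi\circ h=\pi$ to locally invert $\pi$ and conclude $h$ is holomorphic off $Y$, then remove the isolated singularities at $Y$.
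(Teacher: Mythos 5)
Your proposal is correct and follows essentially the same route as the paper: lift a complex structure from $S/H$ via the lemma, write $h$ locally as $(\pi|_{h\cdot U})^{-1}\circ \pi|_U$ using the covering $S\setminus Y \to (S\setminus Y)/H$ from Proposition \ref{prop quot}, and extend across the finitely many points of $Y$ by continuity and Riemann's removable singularity theorem. The only difference is that you spell out explicitly that a holomorphic local homeomorphism is a local biholomorphism, which the paper leaves implicit.
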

\begin{proof}
By the previous lemma, we have complex structures over $S$ and $S/H$ such that $\pi:S\to S/H$ is holomorphic. Take $p\in S\setminus Y$ an $U_p$ a neighborhood of $p$ such that $\pi_{\vert U_p}: U_p \to \pi({U_p})$ is an homeomorphism ($(S\setminus Y) \to (S\setminus Y)/H$ is a cover by proposition \ref{prop quot}). The map $\pi_{\vert h\cdot U_p}: h\cdot U_p \to \pi(h\cdot U_p)=\pi(U_p)$ is also a homeomorphism and $U_p \to h\cdot U_p$, induce by $h\in H $ is equal to $\pi_{\vert h\cdot U_p}^{-1} \pi_{\vert U_p}$. Hence, $h$ acts holomorphically on $S\setminus Y$, since $\pi$ is holomorphic. Using this and the fact that $x\mapsto h\cdot x$ is continuous at $p\in Y$ we deduce from Riemann's theorem on removable singularities that the action on $S$ is holomorphic. We have proved the proposition.
\end{proof}

\begin{proposition}\label{prop ext}
Let $S$ be a closed surface, $Y\subset S$ a finite set and $f$ a self homeomorphism of $S\setminus Y$. There exists a unique self homeomorphism $\tilde{f}$ of $S$ extending $f$. Moreover, if $S$ is oriented and $f$ preserves the orientation, then $\tilde{f}$ is orientation preserving.
\end{proposition}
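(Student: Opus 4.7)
The plan is to define $\tilde f$ on $Y$ by a ``limit of images'' construction, verify continuity, and then bootstrap to a homeomorphism by applying the same construction to $f^{-1}$ and using density. Fix $y\in Y$ and pick a nested basis $U_1\supset U_2\supset\cdots$ of open disc neighborhoods of $y$ in $S$ with $U_k\cap Y=\{y\}$ and $\bigcap_k U_k=\{y\}$. Each $U_k\setminus\{y\}$ is connected, so $W_k:=f(U_k\setminus\{y\})$ is a connected open subset of $S\setminus Y$, and its closure $\overline{W_k}$ in the compact space $S$ is compact and connected. The sequence $\{\overline{W_k}\}$ is decreasing, so $C_y:=\bigcap_k\overline{W_k}$ is a non-empty compact connected subset of $S$.

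The crux is to show $C_y\subset Y$: if $p\in C_y\cap(S\setminus Y)$, then $q:=f^{-1}(p)\in S\setminus Y$ is distinct from $y$, so one can find an open neighborhood $N$ of $q$ in $S\setminus Y$ disjoint from $U_k$ for all large $k$; this forces $W_k\cap f(N)=\emptyset$ while $f(N)$ is a neighborhood of $p$, contradicting $p\in\overline{W_k}$. Hence $C_y$ is a non-empty connected subset of the finite (discrete) set $Y$, so $C_y=\{\tilde f(y)\}$ for a unique point which we declare the value of the extension. Continuity of $\tilde f$ at $y$ then follows from the construction: given any neighborhood $V$ of $\tilde f(y)$, the descending intersection argument produces $k$ with $\overline{W_k}\subset V$, hence $\tilde f(U_k)\subset V$. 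Applying the whole construction to $f^{-1}$ yields a continuous extension $g:S\to S$; both $g\circ\tilde f$ and $\tilde f\circ g$ agree with the identity on the dense open set $S\setminus Y$, hence equal the identity on $S$, so $\tilde f$ is a homeomorphism. Uniqueness is immediate from density of $S\setminus Y$ in $S$ together with the Hausdorff property.

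For the orientation statement, $\tilde f$ is a self-homeomorphism of the connected oriented surface $S$, so its local degree at a point is a locally constant $\{\pm 1\}$-valued function; since $f$ preserves orientation on the non-empty open subset $S\setminus Y$, this local degree equals $+1$ there, and hence $+1$ throughout $S$. The main obstacle I expect is the inclusion $C_y\subset Y$: this is the only step that actively uses the compatibility of $f$ with the topology of $S$ near the punctures rather than generalities about nested compact connected sets, and it is precisely what rules out the image of a shrinking punctured neighborhood of $y$ ``spreading out'' to a non-singleton limit set in $S$.
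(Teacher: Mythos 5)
Your proof is correct, and it takes a genuinely different route from the paper's, even though both hinge on the same geometric fact: the image of a small punctured neighborhood of a puncture can only accumulate at a single point of $Y$. The paper works with finitely many pairwise disjoint closed discs $D_i$ around the punctures, shows that the closure of $f(D_i\setminus\{y_i\})$ in $S$ adds exactly one point $y_{j_i}\in Y$ (using compactness of $f(\partial D_i)$ and the fact that $f(\partial D_i)$ disconnects the surface), and then extends $f$ over each disc by recognizing $D_i$ and $f(D_i\setminus\{y_i\})\cup\{y_{j_i}\}$ as one-point compactifications, so that bijectivity and openness of $\tilde f$ come almost for free once the bijection $y_i\mapsto y_{j_i}$ is in hand. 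You instead define $\tilde f(y)$ as the intersection $C_y$ of the nested compact connected sets $\overline{f(U_k\setminus\{y\})}$, rule out limit points in $S\setminus Y$ by injectivity of $f$ (your ``crux'' step is sound, and correctly uses that $\{U_k\}$ is a neighborhood \emph{basis} so that $U_k$ eventually avoids a fixed neighborhood of $q$), use connectedness of $C_y$ inside the discrete set $Y$ to get a single point, verify continuity directly by the descending-compacts argument, and obtain invertibility by the symmetry trick of running the construction for $f^{-1}$ and comparing on the dense set $S\setminus Y$. What your approach buys: it avoids the most delicate step of the paper's proof (the separation argument about $f(\partial D_i)$ and the resulting control of small neighborhoods of $y_{j_i}$) and the one-point-compactification bookkeeping, replacing them with soft point-set topology plus the clean density argument for the inverse; what the paper's buys is a more explicit picture (a bijection of the punctures and a homeomorphism disc-by-disc) with no separate continuity check. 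Note that your argument does quietly use that $S$ is a surface (dimension at least two) to ensure the punctured neighborhoods $U_k\setminus\{y\}$, hence the sets $W_k$ and $C_y$, are connected -- without this, $C_y$ could meet several punctures. Your orientation argument via local constancy of the local homological degree is the same in substance as the paper's brief remark about continuous local homological orientations.
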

\begin{proof}
Denote by $y_1,\dots y_n$ the elements of $Y$ and take pairwise disjoint closed topological disks $D_i \subset S$ (for $n\in[1,n]$) such that $y_i$ lies in the interior of $D_i$. Denote by $\partial D_i$ the boundary of $D_i$ and set $D_i'=D_i\setminus \{y_i\}$. The punctured disc $f(D_i')$ is closed in $S\setminus Y$ but not compact. Hence, its closure in $S$ is obtained by adding points of $Y$. Take $y_{j_i}\in Y$ adherent to $f(D_i')$. Since $f(\partial D_i)$ is closed in $S$, small neighborhoods of $y_{j_i}$ do not intersect $f(\partial D_i)$ and such neighborhoods are included in $\{y_i\} \cup (f(D_i')\setminus f(\partial D_i))$ ($f(\partial D_i)$ disconnects the surface). In particular $y_{j_i}$, is not adherent to $f(D_k')$ for $k\neq i$. Hence, we have a bijection $y_i\mapsto y_{j_i}$ of $Y$ and the closure of $f(D_i')$ in $S$ is $f(D_i')\cup y_{j_i}$. Since $D_i$ and $f(D_i') \cup y_{j_i}$ are one point compactifications of $D_i'$ and $f(D_i')$ respectively, the homeomorphism $f_i:D_i'\to f(D_i')$ induced by $f$ extends to a homeomorphism $\tilde{f}_i: D_i\to f(D_i)'\cup y_{j_i}$ mapping $y_i$ to $y_{j_i}$. This gives an open and bijective extension $\tilde{f}: S\to S$ of $f$. Moreover, $\tilde{f}$ is unique, since $S\setminus Y$ is dense in $S$. The orientation part can be proved by manipulating the continuous local homological orientations on $S$ and $S\setminus Y$.
\end{proof}
\begin{corollary}\label{act extension}
Take $S,Y$ as in the proposition. Let $H$ be a group acting by homeomorphisms on $S\setminus Y$. The action of $H$ on $S\setminus Y$ extends to a unique action of $H$ on $S$. Moreover, if $S$ is oriented and the action on $S\setminus Y$ preserves the orientation, then the extended action preserves the orientation.
\end{corollary}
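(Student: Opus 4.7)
The plan is to reduce the corollary directly to the previous proposition (Proposition \ref{prop ext}) and use the uniqueness clause to upgrade a pointwise extension of homeomorphisms to a genuine group action.

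First, for each $h \in H$, the homeomorphism $\varphi_h : S\setminus Y \to S\setminus Y$ given by $x \mapsto h\cdot x$ is a self-homeomorphism of $S\setminus Y$, so by Proposition \ref{prop ext} it extends uniquely to a self-homeomorphism $\tilde{\varphi}_h : S \to S$. Set $h\cdot p := \tilde{\varphi}_h(p)$ for $p \in S$; this extends the given action setwise. To check that this is a group action, note that for $h_1, h_2 \in H$ both $\tilde{\varphi}_{h_1}\circ \tilde{\varphi}_{h_2}$ and $\tilde{\varphi}_{h_1 h_2}$ are self-homeomorphisms of $S$ whose restrictions to $S\setminus Y$ coincide with $\varphi_{h_1}\circ\varphi_{h_2} = \varphi_{h_1 h_2}$. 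The uniqueness in Proposition \ref{prop ext} then forces $\tilde{\varphi}_{h_1 h_2} = \tilde{\varphi}_{h_1}\circ \tilde{\varphi}_{h_2}$. The same uniqueness argument applied to the identity on $S\setminus Y$ yields $\tilde{\varphi}_e = \mathrm{id}_S$. Thus $h\cdot(-)$ defines a group action of $H$ on $S$ extending the original one.

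Uniqueness of the extension as an action follows likewise: any action of $H$ on $S$ restricting to the given action on $S\setminus Y$ supplies, element by element, extensions of the $\varphi_h$ to $S$; by the uniqueness clause of Proposition \ref{prop ext}, each such extension must equal $\tilde{\varphi}_h$.

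Finally, for the orientation statement: if $S$ is oriented and each $\varphi_h$ preserves orientation on $S\setminus Y$, then the second clause of Proposition \ref{prop ext} says each extension $\tilde{\varphi}_h$ preserves orientation on $S$, so the extended action is orientation preserving. I do not expect any serious obstacle here; the only subtle point is that group-theoretic compatibility of the extensions is not automatic from extension-by-element but is secured by invoking uniqueness, which is exactly the strength provided by Proposition \ref{prop ext}.
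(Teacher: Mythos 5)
Your proposal is correct and is exactly the argument the paper intends (the corollary is stated without proof precisely because it follows from Proposition \ref{prop ext} by extending each $\varphi_h$ and invoking uniqueness to obtain the group law, the identity, and the uniqueness of the extended action, with orientation handled by the second clause). Nothing is missing.
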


\begin{proposition}\label{prop Fresult}
Let $S$ be a closed, oriented surface, $Y$ a finite (eventually empty) subset of $S$ and $H$ be a finite group acting by orientation preserving homeomorphisms on $S\setminus Y$. The action of $H$ extends to a unique action of $H$ on $S$ stabilizing $Y$. Moreover, $S$ admits a complex structure in which the extended action of $H$ is holomorphic.
\end{proposition}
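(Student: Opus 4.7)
The plan is to assemble the proposition from the pieces already established in the appendix; essentially, \textbf{Corollary \ref{act extension}} handles the topological extension, and \textbf{Proposition \ref{prop Com Str}} then supplies the complex structure. Concretely, I would proceed as follows.

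First, applying Corollary \ref{act extension} to each element $h\in H$ separately produces a unique self-homeomorphism $\tilde h$ of $S$ extending the action of $h$ on $S\setminus Y$, and the corollary also guarantees that $\tilde h$ is orientation preserving. Uniqueness of these extensions, together with the density of $S\setminus Y$ in $S$ and the continuity of multiplication (both $\widetilde{h_1 h_2}$ and $\tilde h_1 \tilde h_2$ are continuous extensions of the same map on a dense subset), forces $\widetilde{h_1 h_2}=\tilde h_1\tilde h_2$. Hence $h\mapsto \tilde h$ is a group homomorphism $H\to \mathrm{Homeo}^+(S)$, which is the desired extended action, and its uniqueness follows from the uniqueness clause in Corollary \ref{act extension} applied element-wise.

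Next, I would check that $Y$ is stable under the extended action. For each $h\in H$, the restriction of $\tilde h$ to $S\setminus Y$ equals the original self-homeomorphism of $S\setminus Y$, so $\tilde h(S\setminus Y)=S\setminus Y$; taking complements in $S$ yields $\tilde h(Y)=Y$. Thus the extended action indeed stabilizes $Y$.

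Finally, having produced an orientation preserving action of the finite group $H$ on the closed oriented surface $S$, I invoke Proposition \ref{prop Com Str}: there is a complex structure on $S$ in which $H$ acts holomorphically. This complex structure automatically makes $H$ act holomorphically on $S\setminus Y$ as well, completing the proof. There is no real obstacle here beyond bookkeeping: the only mildly delicate point is ensuring that the elementwise extension respects the group law, which is handled by the uniqueness of continuous extensions from the dense subset $S\setminus Y$.
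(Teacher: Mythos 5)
Your proposal is correct and matches the paper's own proof, which likewise deduces the statement directly from Corollary \ref{act extension} (extension of the action, uniqueness, orientation) and Proposition \ref{prop Com Str} (existence of a complex structure with holomorphic action). Your extra verifications (the group law via uniqueness of extensions and the stability of $Y$) are just the bookkeeping implicit in the paper's citation of those two results.
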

\begin{proof}
The proposition follows from proposition \ref{prop Com Str}, and corollary \ref{act extension}.
\end{proof}
  
\bibliographystyle{alpha}
\bibliography{Biblio}{}

\end{document}